\pgfplotsset{compat=newest} 
\pgfplotsset{plot coordinates/math parser=false}
\newtheorem{theorem}{Theorem}[section]
\newtheorem{main}[theorem]{Main result}
\newtheorem{proposition}[theorem]{Proposition}
\newtheorem{lemma}[theorem]{Lemma}
\theoremstyle{definition}
\newtheorem{assumption}[theorem]{Assumption}
\theoremstyle{remark}
\newtheorem{remark}[theorem]{Remark}
\DeclarePairedDelimiter{\norm}{\lVert}{\rVert}
\DeclarePairedDelimiter{\abs}{\lvert}{\rvert}
\newcommand{\scalar}[2]{\left( #1,#2 \right)}
\newcommand{\dual}[2]{\left\langle #1,#2 \right\rangle}
\newcommand{\eps}{\varepsilon}
\newcommand{\R}{\mathbb{R}}
\renewcommand{\leq}{\leqslant}
\renewcommand{\geq}{\geqslant}
\DeclareMathAlphabet{\mathpzc}{OT1}{pzc}{m}{it}
\renewcommand{\Re}{\mathcal R\!\mathpzc{e}}
\begin{document}

\title[Stationary States on Nonlinear Quantum Graphs]{Gradient Flow Approach to the Calculation of Stationary States on Nonlinear Quantum Graphs}

\author[C.~Besse]{Christophe Besse}
\author[R.~Duboscq]{Romain Duboscq}
\author[S.~Le Coz]{Stefan Le Coz}
\thanks{The work of C. B. is partially supported by ANR-17-CE40-0025.
  The work of S. L. C. is 
  partially supported by ANR-11-LABX-0040-CIMI within the
  program ANR-11-IDEX-0002-02 and  ANR-14-CE25-0009-01}

\address[Christophe Besse]{Institut de Math\'ematiques de Toulouse ; UMR5219,
  \newline\indent
  Universit\'e de Toulouse ; CNRS,
  \newline\indent
  UPS IMT, F-31062 Toulouse Cedex 9, 
  \newline\indent
  France}
\email[Christophe Besse]{Christophe.Besse@math.univ-toulouse.fr}

\address[Romain Duboscq]{Institut de Math\'ematiques de Toulouse ; UMR5219,
  \newline\indent
  Universit\'e de Toulouse ; CNRS,
  \newline\indent
  INSA IMT, F-31077 Toulouse, 
  \newline\indent
  France}
\email[Romain Duboscq]{Romain.Duboscq@math.univ-toulouse.fr}

\address[Stefan Le Coz]{Institut de Math\'ematiques de Toulouse ; UMR5219,
  \newline\indent
  Universit\'e de Toulouse ; CNRS,
  \newline\indent
  UPS IMT, F-31062 Toulouse Cedex 9,
  \newline\indent
  France}
\email[Stefan Le Coz]{stefan.lecoz@math.cnrs.fr}

\subjclass[2010]{35Q55,35R02,65M06}




\date{\today}
\keywords{normalized gradient flow, ground states, stationary, quantum graphs,
  nonlinear Schr\"odinger equation}

\begin{abstract}
We introduce and implement a method to compute stationary states of nonlinear Schr\"odinger equations on metric graphs. Stationary states are obtained as local minimizers of the nonlinear Schr\"odinger energy at fixed mass. Our method is based on a normalized gradient flow for the energy (i.e. a gradient flow projected on a fixed mass sphere) adapted to the context of nonlinear quantum graphs. We first prove that, at the continuous level, the normalized gradient flow is well-posed, mass-preserving, energy diminishing and converges (at least locally) towards  stationary states. We then establish the link between the continuous flow and its discretized version. We conclude by conducting a series of numerical experiments in model situations showing the good performance of the discrete flow to compute stationary states. Further experiments as well as detailed explanation of our numerical algorithm are given in a companion paper.
\end{abstract}

\maketitle


\section{Introduction}
\label{sec:introduction}

Partial differential equations on (metric) graphs have a relatively recent history. Recall that a \emph{metric graph} $\mathcal G$ is a collection of vertices $\mathcal V$ and edges $\mathcal E$ with \emph{lengths} $l_e\in(0,\infty]$ associated to each edge $e\in \mathcal E$. 
One of the earliest
account of a partial differential equation set up on metric graphs is the work of Lumer~\cite{Lu80} in 1980 on ramification spaces. Among the early milestones in the development
of the theory of partial differential equations on graphs, one finds the work of Nicaise~\cite{Ni85} on
propagation of nerves impulses. Since then, the theory has known considerable developments, due in particular
to the natural appearance of graphs in the modeling of various
physical situations. One may refer to the survey book~\cite{DaZu06} for a broad
introduction to the study of partial differential equations on networks, with a special emphasis
on control problems.

Among partial differential equations problems set on metric graphs, one has become increasingly
popular: quantum graphs. By quantum graphs, one usually refers to a
metric graph $\mathcal G=(\mathcal V,\mathcal E)$ equipped with a differential operator $H$ often referred to as
the Hamiltonian. The most popular example of Hamiltonian is $-\Delta$ on the edges with Kirchhoff conditions (conservation of charge and current) at the vertices
(see Section~\ref{sec:preliminaries} for a precise definition), where $\Delta$ is the Laplace operator. The
book of Berkolaiko and Kuchment~\cite{BeKu13} provides an excellent
introduction to the theory of quantum graphs.

Recently, another topic has gained an incredible momentum: nonlinear
quantum graphs. By this terminology, we refer to a metric graph $\mathcal G=(\mathcal V,\mathcal E)$
equipped with a nonlinear evolution equation of Schr\"odinger
type
\[
i\partial_tu-Hu+g(|u|^2)u=0,
\]
where $u=u(t,x) \in \mathbb{C}$ is the unknown wave function, $t$
denoting the time variable and $x$ the position on the edges of $\mathcal{G}$.
Whereas the research on linear quantum graphs is mainly focused
on the spectral properties of the Hamiltonian, one of the main area of
investigation for nonlinear quantum graphs is the existence of ground
states, i.e. minimizers of the Schr\"odinger energy $E$ on fixed
mass $M$, where
\[
E(u)=\frac12\dual{Hu}{u}-\frac12\int_{\mathcal G}G(|u|^2),\quad G'=g,\quad M(u)=\norm{u}_{L^2(\mathcal G)}^2.
  \]
Indeed, ground states are considered to be the building blocks
of the dynamics for the nonlinear Schr\"odinger equation, and being
able to obtain them by a minimization process guarantees in particular
their (orbital) stability.

On the theoretical side, the literature concerning ground states on
quantum graphs is already too vast to be shortly summarized. A
perfect introduction to the topic is furnished by the survey paper of
Noja~\cite{No14} and we
only present a few relevant samples.

Among the model cases for graphs, the simplest ones may be
star-graphs, i.e. graphs with one vertex and a finite number of
semi-infinite edges attached to the vertex  (see Figure~\ref{fig:N_star_graph}). 
\begin{figure}[htbp!]
  \centering
\begin{tikzpicture}[scale=0.065]
  \node at (0,0) {$\bullet$};
  \draw (-40,0) -- (40,0);
  \draw (-20,-34.64) -- (20,34.64);
  \draw (-20,34.64) -- (20,-34.64);
  \node at (-24,-34.64) {$\infty$};
  \node at (24,-34.64) {$\infty$};
  \node at (44,0) {$\infty$};
  \node at (24,34.64) {$\infty$};
  \node at (-24,34.64) {$\infty$};
  \node at (-44,0) {$\infty$};
\end{tikzpicture}    
  \caption{Star-graph with $N=6$ edges}
  \label{fig:N_star_graph}
\end{figure}
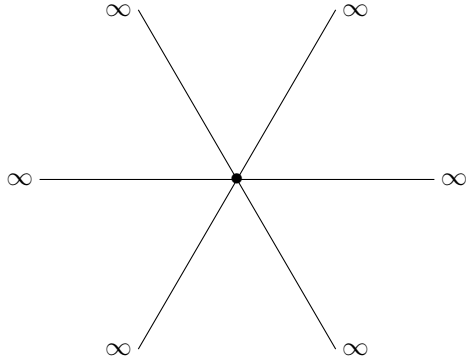
For this type of graphs
with an attractive Dirac type interaction at the vertex,  Adami,
Cacciapuoti, Finco and Noja~\cite{AdCaFiNo14,AdCaFiNo16} established
under a mass condition and for sub-critical nonlinearities the
existence of a (local or global) minimizer of the energy at fixed mass,
with an explicit formula for the minimizer (see Section~\ref{sec:preliminaries} for more
details and explanations). For more general nonlinear quantum graphs,
Adami, Serra and Tilli~\cite{AdSeTi15b,AdSeTi16,AdSeTi17a} have
focused on the case of Kirchhoff-Neumann boundary conditions for
non-compact connected metric graphs with a finite number of edges and
vertices. In particular, they obtained a topological condition
(see Assumption~\ref{ass:H} (H)) under which no ground state exists. On the other hand, in some cases,
metric properties of the graph and the value of the mass constraint
influence the existence or non-existence of the ground state~\cite{AdSeTi16,AdSeTi17a,DOVETTA2020107352,noja2020standing}.

Another particularly interesting study is presented in the work of
Marzuola and Pelinovsky~\cite{MaPe16} for the dumbbell graph. As its
name indicates, the dumbbell graph is made of two circles linked by a
straight edge (see Figure~\ref{fig:dumbell}). It is shown in~\cite{MaPe16} that for small fixed mass, the
minimizer of the energy is a constant. As the mass increases, several
bifurcations for the ground state occur, in particular a symmetric (main part located on the
central edge) and an asymmetric one (main part located on one of the
circles). Numerical experiments (based on Newton's iteration scheme)
complement the theoretical study in~\cite{Go19,MaPe16}.
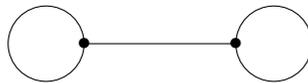
\begin{figure}[htbp!]
  \centering
    \begin{tikzpicture}
      \node at (-1,0) {$\bullet$};
      \node at (1,0) {$\bullet$};
      \draw (-1.5,0) circle (0.5);
      \draw (-1,0) -- (1,0);
      \draw (1.5,0) circle (0.5);
    \end{tikzpicture}
  \caption{Dumbbell graph}
  \label{fig:dumbell}
\end{figure}

Among  the many other interesting recent results on nonlinear quantum graphs, we mention the
flower graphs studied in~\cite{KaMaPeXi20}, graphs with generals
operators and nonlinearities~\cite{Ho19}, periodic graphs~\cite{PeSc17}, etc. 

On the numerical side, however, the literature devoted to nonlinear
quantum graphs is very sparse. 
Finite differences on graphs have been implemented in 
a library developed in Matlab by R. H. Goodman, available in~\cite{GoodmanLib} and which has been used in particular in~\cite{Go19,KaPeGo19}.
The work~\cite{MaPe16} is one of
the rare work containing numerical computation of
nonlinear ground states on graphs.
In our case, we have implemented a finite difference discretization  scheme (see Section \ref{sec:space_disc}) in the framework of the Grafidi library~\cite{Grafidi}, a Python library which we have developed for the numerical simulation on quantum graph and which is presented in the companion paper~\cite{2021_BDL_Arxiv}.

The integrability of the cubic nonlinear Schr\"odinger equation on graphs is analyzed in \cite{SoMaSaSaNa10}, with some numerical simulation and an appendix discussing the discretization at the vertices. The fully discrete (Ablowitz-Ladik type) integrable nonlinear Schr\"odinger is studied in \cite{NaSoMaSa11}. Other model equations on graphs are considered in \cite{SaBaMaKe18,SoBaMaNaUe16}. Extension to transparent vertices conditions is proposed in \cite{YuSaEhMa19a, YuSaAsEhMa20, YuSaEhMa19b}.

Our goal in this paper is to develop numerical tools for the
calculation of local minimizers of the energy at fixed mass $m>0$ in the
setting of generic metric graphs with non necessarily Kirchhoff vertex
boundary conditions.

The numerical method that we have implemented corresponds to a
normalized gradient flow: at each step of time, we evolve in the
direction of the gradient of the energy and renormalize the mass of
the outcome. Such scheme is popular in the physics literature under
the name ``imaginary time method''. One of the earliest mathematical
analysis was performed by Bao and Du~\cite{BaDu04}. More recently, in
the specific case of the nonlinear Schr\"odinger equation on the line $\R$ with
focusing cubic nonlinearity,
Faou and Jezequel~\cite{FaJe18} performed a theoretical analysis of
the various levels of discretization of the method, from the continuous
one to the fully discrete scheme.

At the continuous level, by considering a function $\psi(t,x)$ on $\mathcal{G}$, the normalized gradient flow is given by
  \begin{equation}\tag{CNGF}
    \label{eq:cngf-intro}
    \partial_t\psi=-E'(\psi)+\frac{1}{M(\psi)}\dual{E'(\psi)}{\psi}\psi,
\end{equation}
and we establish in Section~\ref{sec:cngf} the main properties of the flow. This is our first main result, which can be stated in the following informal way.
\begin{main}[see Theorem~\ref{thm:convergence}]
  Under Assumptions~\ref{ass:nonlinearity} and~\ref{ass:coercivity}, 
  the continuous normalized gradient flow is well-posed, mass preserving, energy
  diminishing, and converges locally towards local minimizers. 
\end{main}

Having established the adequate properties of the flow at the continuous level, we turn to the discretization process. As is explained in Section~\ref{sec:discretization}, several time-discretizations are possible, but the so-called Gradient Flow with Discrete Normalization has proven to be very efficient. It consists into the following process to go from $\psi^n$ (an approximation of $\psi(t_n,\cdot)$ at discrete time $t_n$) to $\psi^{n+1}$:
\begin{equation}\tag{GFDN}
  \label{eq:gfdn-intro}
  \left\{
    \begin{aligned} \frac{\varphi^{n+1}-\psi^n}{t_{n+1}-t_n}&=-H\varphi^{n+1}+g(|\psi^n|^2)\varphi^{n+1},\\
      \psi^{n+1}&=\sqrt{m}\frac{ \varphi^{n+1}}{\norm{\varphi^{n+1}}_{L^2}}.
  \end{aligned}
\right.
  \end{equation}
The space discretization can be performed using second order finite differences inside the edges. The values at the vertices are obtained by approximating by finite differences the boundary conditions at the vertices. 
  
In our second main result, we establish the link between the continuous normalized gradient flow and its space-time discretization. 
\begin{main}[see Section~\ref{sec:discretization}]
The Gradient Flow with Discrete Normalization~\eqref{eq:gfdn-intro} is a time-discretization of the continuous normalized gradient flow~\eqref{eq:cngf-intro}. Its space discretization can be obtained by finite differences with a special treatment at the vertices.
\end{main}

Finally, we illustrate by numerical experiments the efficiency of our
technique. We use as test case the $2$-star graph with $\delta$ and $\delta'$ boundary conditions at the vertex connecting the two edges. This test case has been extensively studied from a theoretical point of view (see~\cite{FuJe08,FuOhOz08,LeFuFiKsSi08} for earlier works and~\cite{AdBoRu20} and the references therein for more recent achievements). A sneak peek of the results presented in Section~\ref{sec:experiments} is offered in Figure~\ref{fig:delta_2Edges} where the almost perfect agreement between the theoretical solution and the computed one is shown in the case of a $2$-star graph with attractive $\delta$ condition at the vertex. We also consider other possible types of graphs. Further numerical experiments as well as a detailed presentation of our numerical algorithm are given in the  companion paper~\cite{2021_BDL_Arxiv}.
\begin{figure}[htbp!]
  \centering \includegraphics[width=.38\textwidth]{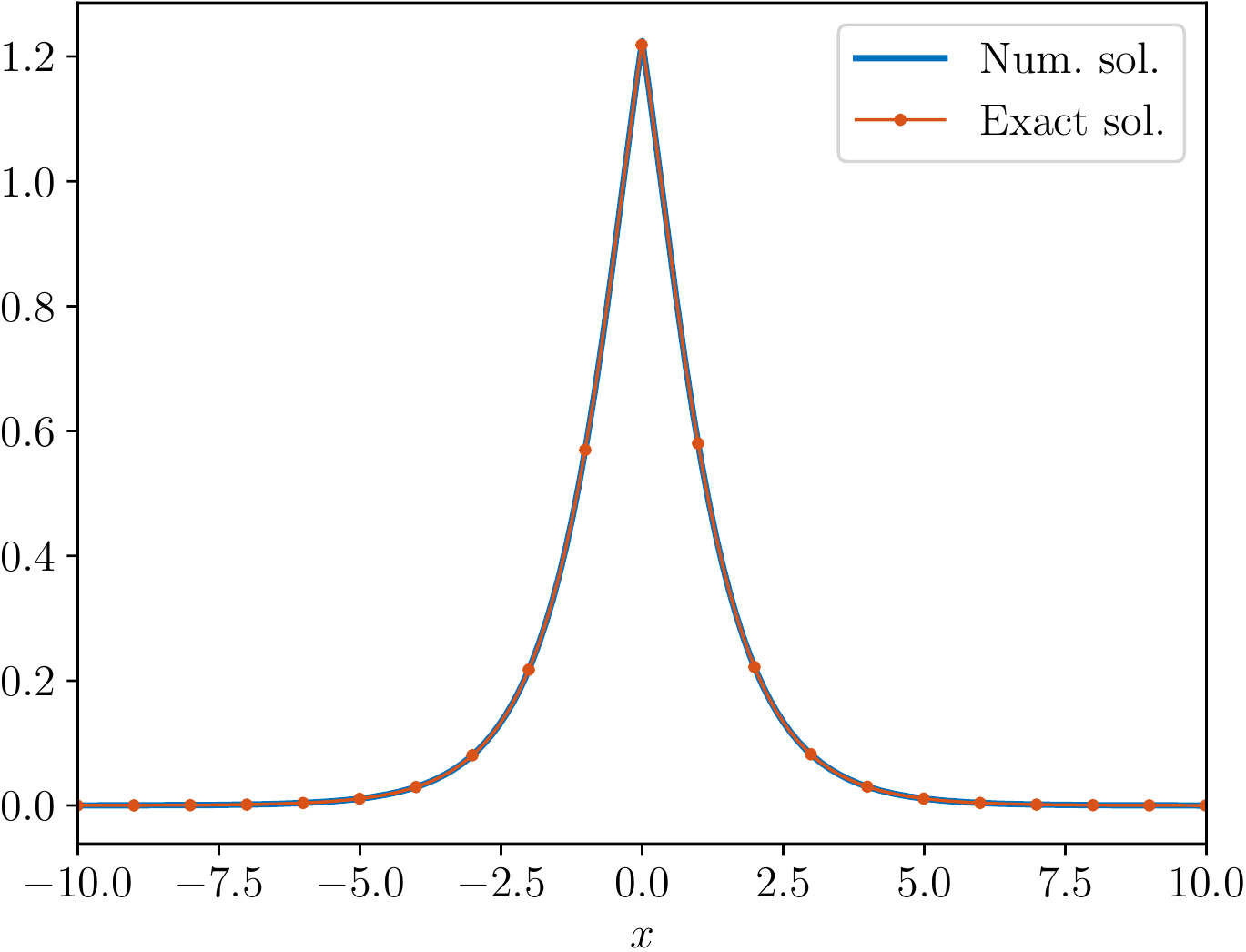}   \caption{Comparison of numerical solution to ground state for $\delta$
    interaction.}
  \label{fig:delta_2Edges}
\end{figure}

Our main achievements in the numerical experiments are summarized in
the following statement.

\begin{main}[see Section~\ref{sec:experiments}]
    The observed convergence of the discretized flow is of order $2$ in
  space. 
  In the test case of a nonlinear Schr\"odinger equation on a star graph
  with two edges and attractive $\delta$ or $\delta'$ interactions at the vertex, the discretized
  flow converges towards the explicitly known ground state. Applicability of the method to generic graphs is illustrated on the sign-post graph and the tower of bubbles graph.  
\end{main}

The rest of this paper is organized in the following way. In Section~\ref{sec:preliminaries}, we present in details the setting in which
we work and give theoretical preliminaries. In Section~\ref{sec:cngf},
we prove that the continuous normalized gradient flow is well-posed, energy
diminishing and converges locally towards a stationary state. In Section~\ref{sec:discretization}, we present the space-time discretization
process of the continuous flow. Finally, numerical experiments in a
test case and in more elaborate settings are presented in Section~\ref{sec:experiments}.

\newcommand{\EnergyOD}{E_{RW}}
\newcommand{\EnergyNorm}{E_{W}}

\section{Preliminaries}
\label{sec:preliminaries}
We start with a few preliminaries to give the precise setting in which we would like to work. 

\subsection{Linear quantum graphs}
Let $\mathcal G$ be a metric graph, i.e. a collection of edges $\mathcal E $ and vertices $\mathcal V$. We assume that $\mathcal G$ 
connected. Two vertices might be connected by several edges and one edge can link a vertex to itself. Each of the edges $e\in\mathcal E $ will be identified with a segment $I_e=[0,l_e]$ if $l_e\in(0,\infty)$ or $I_e=[0,\infty)$ if $l_e=\infty$, where $l_e$ is the (finite or infinite) \emph{length} of the edge. 

A (complex valued) function $\psi:\mathcal G\to\mathbb C$ is a collection of one dimensional maps defined for each edge $e\in\mathcal E $:  
\[
\psi_e:I_e\to\mathbb C.
\]
We define $L^p(\mathcal G)$ and $H^k(\mathcal G)$ by
\[
L^p(\mathcal G)=\bigoplus_{e\in\mathcal E }L^p(I_e),\quad H^k(\mathcal G)=\bigoplus_{e\in\mathcal E }H^k(I_e).
\]
The corresponding norms will be given by
\[
\norm{\psi}_{L^p}^p=\sum_{e\in\mathcal E }\norm{\psi_e}_{L^p(I_e)}^p,\quad \norm{\psi}_{H^k}^2=\sum_{e\in\mathcal E }\norm{\psi_e}_{H^k(I_e)}^2.
  \]
  The scalar product on $L^2(\mathcal G)$ will be given by
  \[
\scalar{\phi}{\psi}_{L^2}=\sum_{e\in\mathcal E }\Re \int_{I_e}\phi_e\bar\psi_edx.
\]
To denote the duality product between $H^1(\mathcal G)$ and its dual we will use the angle brackets:
\[
\dual{\cdot}{\cdot}=\dual{\cdot}{\cdot}_{H^{-1},H^1}.
  \]
  Note that it is common to include in the definition of $H^1(\mathcal G)$ a continuity condition at the vertices. In order to consider more general situations, we do not make this restriction here and we will later instead introduce the space $H_D^1(\mathcal G)$, which corresponds to the Dirichlet part of the compatibility conditions at the vertices (see~\eqref{eq:Dirichlet-Sobolev}). 

Given $u\in H^2(\mathcal G)$ and a vertex $v\in\mathcal V$ of degree $d_v$, define $u(v)\in\R^{d_v}$ as the column vector
\[
u(v)=(u_e(v))_{e\sim v}
\]
where $e\sim v$ denotes the edges incident to the vertex $v$ and $u_e(v)$ is the corresponding limit value of $u_e$. The boundary conditions at the vertex $v$ will be described by 
\[
A_vu(v)+B_vu'(v)=0,
\]
where $A_v$ and $B_v$ are $d_v\times d_v$ matrices and $u'(v)$ is formed with the derivatives along the edges in the outgoing directions. Consider for example the classical \emph{Kirchhoff-Neumann boundary conditions} at the vertex $v$:  we require the \emph{conservation of charge}, i.e.
for all $e$ and $e'$ incident to the same vertex $v$
\[
u_e(v)=u_{e'}(v),
\]
and the \emph{conservation of current}, i.e.
\[
\sum_{e\sim v}u_e'(v)=0.
\]
These conditions are expressed in terms of $A_v$ and $B_v$ by
\begin{equation}
  \label{eq:kirchhoff-neumann}
A_v=
\begin{pmatrix}
  1&-1&&&(0)\\
&1&-1\\
&&\ddots&\ddots\\
&&&1&-1\\
(0)&&&&0
\end{pmatrix},
\quad
B_v=
\begin{pmatrix}
0&\dots&0\\
\vdots&&\vdots\\
0&\dots&0\\
1&\dots&1
\end{pmatrix}.
\end{equation}

For the sake of conciseness, we use the notation 
\[
u(\mathcal V)=(u(v))_{v\in\mathcal V},
\]
for the column vector of all values at the end of the edges and the corresponding boundary conditions  matrices are given by
  \begin{equation*}
A_{\mathcal V}=
\begin{pmatrix}
  A_{v_1}&&(0)\\
&\ddots\\
(0)&&A_{v_V}
\end{pmatrix},
\quad
B_{\mathcal V}=
\begin{pmatrix}
  B_{v_1}&&(0)\\
&\ddots\\
(0)&&B_{v_V}
\end{pmatrix}.
\end{equation*}
The boundary conditions considered are local at the vertices, we refrain here from taking into account more general boundary conditions.

We now define on the graph a second order  unbounded operator $H$ by
\[
    H:D(H)\subset L^2(\mathcal G)\to L^2(\mathcal G)
\]
where the domain of $H$ is given by
\[
D(H)=\{ u\in H^2(\mathcal G): A_{\mathcal V}u(\mathcal V)+B_{\mathcal V}u'(\mathcal V)=0 \} 
\]
and the action of $H$ on $u\in D(H)$ is given by
\[
(Hu)_e=-\partial_{xx}u_e
\]
for every edge $e\in\mathcal E $. We restrict ourselves to self-adjoint operators, which is known to be equivalent for $H$ (see e.g.~\cite[Theorem 1.4.4]{BeKu13}) to request  that at each vertex $v$ the $d_v\times 2d_v$ matrix $(A_v |B_v)$ has maximal rank and the matrix $A_vB_v^*$ is symmetric. In that case, for each vertex $v$ there exist three orthogonal and mutually orthogonal operators $P_{D,v}$ (Dirichlet part), $P_{N,v}$ (Neumann part) and $P_{R,v}=Id-P_{D,v}-P_{N,v}$ (Robin part), acting on $\mathbb C^{d_v}$ and an invertible self-adjoint operator $\Lambda_v$ acting on the subspace $P_{R,v}\mathbb C^{d_v}$ such that the boundary values of $u\in D(H)$ at the vertex $v$ verify
\[
P_{D,v}u(v)=P_{N,v}u'(v)=P_{R,v}u'(v)-\Lambda_v P_{R,v}u(v)=0.
\]
Using this expression of the boundary conditions, we can express (see e.g.~\cite[Theorem 1.4.11]{BeKu13}) the quadratic form corresponding to $H$, which we denote by $Q$ and is given by
\begin{equation}
  \label{eq:Q}
Q(u)=\frac12\norm{u'}_{L^2}^2+\frac12\sum_{v\in\mathcal V}\scalar{\Lambda_v P_{R,v}u}{P_{R,v}u}_{\mathbb C^{d_v}}.
\end{equation}
  The domain of $Q$ is given by all functions $u\in H^1(\mathcal G)$ such that at each vertex $P_{D,v}u=0$. We denote it by
    \begin{equation}
H^1_D(\mathcal G)=\{ u\in H^1(\mathcal G) : \forall v\in \mathcal V, \; P_{D,v}u=0\}.\label{eq:Dirichlet-Sobolev}
\end{equation}

We now consider two examples of boundary conditions: Kirchhoff-Neumann and $\delta$-type. We already recalled what 
the classical Kirchhoff-Neumann boundary conditions~\eqref{eq:kirchhoff-neumann} are. In terms of the projection operator,  the Dirichlet part $P_{D,v}$ in the Kirchhoff-Neumann case is simply the projection on the kernel of $B_v$, given by
\[
 P_{D,v}= \frac{1}{d_v}\begin{pmatrix}
    d_v-1&-1&\cdots&\cdots&-1\\
    -1&d_v-1&&&\vdots\\
    \vdots&&\ddots&&\vdots\\
    \vdots&&&d_v-1&-1\\
    -1&\cdots&\cdots&-1&d_v-1
  \end{pmatrix}.
\]
The Neumann part is given by $I-P_{D,v}$, precisely
\[
P_{N,v}=\frac{1}{d_v}
\begin{pmatrix}
  1&\cdots&1\\
  \vdots&&\vdots\\
  1&\cdots&1
\end{pmatrix},
\]
and there is no Robin part.

We consider now a vertex with a $\delta$-type condition of strength $\alpha_v\in\mathbb R$ at the vertex $v$, which is defined for $u\in H^2(\mathcal G)$ as follows:
\[
u\text{ is continuous at }v,\quad \sum_{e\sim v}u_e'(v)=\alpha_v u(v).
\]
This vertex condition is analogous to the jump condition appearing in the domain of the operator for the celebrated Schr\"odinger operator with Dirac potential (see e.g. the reference book~\cite{AlGeHoHo88} and Section~\ref{sec:delta-ground-states}). In terms of $A_v$ and $B_v$ matrices, the condition takes the form
\begin{equation*}
A_v=
\begin{pmatrix}
  1&-1&&&(0)\\
0&1&-1\\
\vdots&\ddots&\ddots&\ddots\\
0&&\ddots&1&-1\\
-\alpha_v&0&\cdots&0&0
\end{pmatrix},
\quad
B_v=
\begin{pmatrix}
0&\dots&0\\
\vdots&&\vdots\\
0&\dots&0\\
1&\dots&1
\end{pmatrix}.
\end{equation*}
When $\alpha_v=0$, we recover the classical Kirchoff-Neumann boundary conditions. When $\alpha_v\neq0$, the Dirichlet, Neumann and Robin projectors are given as follows. The Dirichlet projector $P_{D,v}$ is (as when $\alpha_v=0$) the projection on the kernel of $B_v$. There is no Neumann part and the Robin part is given by $I-P_{D,v}$ (which was the Neumann part for $\alpha=0$). The operator $\Lambda_v=B_v^{-1}A_v$ on the range of $P_{R,v}$ is the multiplication by $\frac{\alpha_v}{d_v}$. Assuming that we have $\delta$-type conditions on the whole graph, the domain $H^1_D(\mathcal G)$ of the quadratic form $Q$ associated with $H$ is the space of functions of $H^1(\mathcal G)$ continuous at each vertex, and we thus may write $u(v)$ for the unique scalar value of $u\in H^1_D(\mathcal G)$ at each vertex. 
The quadratic form associated with $H$ then becomes 
\[
Q(u)=\frac{1}{2}\norm{u'}_{L^2}^2+\frac12\sum_{v\in\mathcal V}\alpha_{v}|u(v)|^2.
\]

\subsection{Nonlinear quantum graphs}
Having established the necessary preliminaries on linear quantum graphs in the previous section, we now turn to nonlinear quantum graphs. Given a quantum graph $(\mathcal G,H)$, 
we consider the nonlinear Schr\"odinger equation on the graph
$\mathcal G$ given by 
\begin{equation}
  \label{eq:nls}
  i\partial_tu-Hu+f(u)=0,
\end{equation}
where $u=u(t,\cdot) \in L^2(\mathcal{G})$ is the unknown wave function, $t$ the time variable, and $f$ is a nonlinearity satisfying the following requirements.

\begin{assumption}
  \label{ass:nonlinearity}
  The nonlinearity $f:\mathbb C\to\mathbb C$ verifies the following assumptions.
  \begin{itemize}
    \item  Gauge invariance: there exists $g:[0,\infty)\to\R$ such that
$f(z)=g(|z|^2)z$ for any $z\in\mathbb C$. 
  \item $g\in\mathcal{C}^0([0,+\infty),\R)\cap\mathcal{C}^1((0,+\infty),\R)$, $g(0)=0$ and $\lim_{s\to 0} sg'(s)=0$.
  \item There exist $C>0$ and $1< p<\infty$ such that $|s^2g'(s^2)|\leq Cs^{p-1}$ for $s\geq1$.
  \end{itemize}
\end{assumption}

Typical examples for $f$ are power type or double power type
nonlinearities
\[
f(u)=\pm|u|^{p-1}u,\quad f(u)=|u|^{p-1}u-|u|^{q-1}u,
\]
where $1<p,q<\infty$. We will use the real form of the anti-derivative of $f$, which is given for every $z\in\mathbb C$ by
\[
F(z)=\int_0^{|z|}f(s)ds.
\]
Using the antiderivative $G$ of $g$, we may also express $F$ (as we
did in the Introduction) as
\[
F(z)=\frac12G(|z|^2).
  \]
Observe that $f$ is a function defined on $\mathbb C$. Its differential $df$ at $z\in\mathbb C$ might be expressed for $h\in\mathbb C$ by
\[
df(z)h=2g'(|z|^2)z\Re(z\bar h)+g(|z|^2)h.
\]
The functions on which $f$ will be evaluated in the next sections will mostly be real-valued and for simplicity we will use the following notation when the argument of $f$ is real: for $s\in\mathbb R$ we define
\[
f'(s)=2g'(s^2)s^2+g(s^2). 
  \]
Formally,~\eqref{eq:nls} is a Hamiltonian system in the form
\[
i\partial_tu=E'(u),
\]
where the Hamiltonian, or the \emph{energy}, $E$ is a conserved quantity defined for any $u\in H^1_D( \mathcal G)$ by
\[
E(u)=Q(u)-\int_{\mathcal G}F(u)dx.
\]
It is a $\mathcal C^2$ functional on $H^1_D(\mathcal G)$ and its derivative is given by
\begin{equation}
E'(u)=Hu-f(u),\label{eq:derivE}
\end{equation}
with the slight abuse of notation that $H$ here denotes the corresponding operator from $H^1(\mathcal G)$ to its dual.

  From Noether's theorem, the gauge symmetry of~\eqref{eq:nls} yields another conserved quantity (see e.g.~\cite{DeGeRo15}), the \emph{mass}, given by
  \[
M(u)=\norm{u}_{L^2(\mathcal G)}^2. 
    \]

We are interested in this paper in the standing waves solutions for the nonlinear Schr\"odinger equation set on the graph. By definition, a \emph{standing wave} is a solution $u$ of~\eqref{eq:nls} given for all $e\in\mathcal E $ by
\[
  u_e(t,\cdot)=e^{i\omega t}\phi_e(\cdot),
\]
where $\omega\in\R$ and the profile $\phi\in H^1(\mathcal G)$ is independent of time. We refer to the profile $\phi$ as \emph{stationary state}. Substituting into~\eqref{eq:nls} leads to the equation of the profile $\phi$, given by
\begin{equation}
  \label{eq:snls}
H\phi+\omega \phi-f(\phi)=0.
\end{equation}
Therefore, $\phi$ is a critical point of the \emph{action} functional
\[
E+\frac\omega2 M.
\]
Observe that there is a natural smoothing for $\phi$: since, with our assumptions, $(\omega \phi-f(\phi))\in L^2(\mathcal G)$, we have $\phi\in D(H)$. 

Strategies abound to find critical points of the action. One particularly interesting strategy is to minimize the energy on fixed mass, as the obtained minimizer will be (following the method established by Cazenave and Lions~\cite{CaLi82}) the profile of an orbitally stable  standing wave of~\eqref{eq:nls} (provided minimizing sequences are compact, which is usually a key step of the proof). More precisely, given $m>0$, we will be looking for $\phi\in H_D^1(\mathcal G)$ such that
  \begin{equation}
M(\phi)=m,\quad E(\phi)=\min\{ E(\psi):\psi\in H_D^1(\mathcal G),\,M(\psi)=m \}.\label{eq:minimization}
\end{equation}
The theoretical existence of minimizers for the problem~\eqref{eq:minimization} has attracted a lot of attention in the past decade and we will not attempt to give an exhaustive overview of the existing literature. Some examples have already been shortly mentioned in Section~\ref{sec:introduction}. In what follows, we give a few more details on the case of star graphs with two or more edges, and on the topological assumption preventing the existence of ground states. 

\subsubsection{Star graphs with two or more edges}
\label{sec:delta-ground-states}
One of the simplest nontrivial graph is given by two semi-infinite half-lines connected at a vertex, with $\delta$ type condition on the vertex. In this case, the operator $H$ is equivalent to the second order derivative on $\R$ with point interaction at $0$. 
In this setting, existence and stability of standing waves for a focusing power-type nonlinearity was treated by Fukuizumi and co.~\cite{FuJe08,FuOhOz08,LeFuFiKsSi08}, using techniques based on Grillakis-Shatah-Strauss  stability theory (see~\cite{GrShSt87,GrShSt90} for the original papers and~\cite{DeGeRo15,DeRo19} for recent developments).

Various generalizations have been obtained, e.g. for a generic point interaction~\cite{AdNo09,AdNo13,AdNoVi13} ($\delta$ or $\delta'$ boundary conditions) or in the case of non-vanishing boundary conditions at infinity~\cite{IaLeRo17}. In particular, the following results have been obtained in~\cite{AdNoVi13}.
\begin{proposition}
  Assume that $\mathcal G$ is formed by two semi-infinite edges $\{ e_1,e_2\}$ connected at the vertex $v$. Let $H:D(H)\subset L^2(\mathcal G)\to L^2(\mathcal G)$ be the operator $-\partial_{xx}$ with 
one of the following conditions to be satisfied at the vertex.
  \begin{itemize}
  \item Attractive $\delta$ conditions:
    \[
      \varphi_{e_1}(v)=\varphi_{e_2}(v),\quad \varphi_{e_1}'(v)+\varphi_{e_2}'(v)=\alpha\varphi(v),\;\alpha<0.
    \]
    \item Attractive $\delta'$ conditions:
    \[
\varphi_{e_1}(v)-\varphi_{e_2}(v)=\beta\varphi_{e_2}'(v),\;\beta<0,\quad       \varphi_{e_1}'(v)+\varphi_{e_2}'(v)=0.
\]
\item Dipole conditions: 
  \[
\varphi_{e_1}(v)+\tau\varphi_{e_2}(v)=0,\quad     \varphi_{e_1}'(v)+\tau\varphi_{e_2}'(v)=0,\quad \tau\in\R.
    \]
  \end{itemize}
  Define for $\varphi\in H^1_D(\mathcal G)$ the energy
  \[
E(\varphi)=Q(\varphi)-\frac{1}{p+1}\norm{\varphi}_{L^{p+1}}^{p+1},
\]
where $1<p<5$. Then for any $m>0$ there exists up to phase shift and translation a unique minimizer to
\[
\min\{E(\varphi):\varphi\in H^1_D(\mathcal G),\,M(\varphi)=m\}.
\]
\end{proposition}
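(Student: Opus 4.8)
The plan is to identify the two-edge star graph with the real line $\R$ carrying a point interaction at the vertex, so that away from the vertex any candidate minimizer solves a scalar stationary NLS equation on each edge while the three boundary conditions encode a $\delta$, a $\delta'$, or a dipole interaction respectively. I would split the argument into an existence part, handled by concentration--compactness, and a uniqueness part, handled by a direct analysis of the Euler--Lagrange equation.

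For existence, set $I_m=\inf\{E(\varphi):\varphi\in H^1_D(\mathcal G),\ M(\varphi)=m\}$. I would first show $I_m>-\infty$: the nonlinear term is estimated by the one-dimensional Gagliardo--Nirenberg inequality $\norm{\varphi}_{L^{p+1}}^{p+1}\lesssim\norm{\varphi'}_{L^2}^{(p-1)/2}\norm{\varphi}_{L^2}^{(p+3)/2}$, and since $p<5$ the gradient exponent is strictly below $2$, so Young's inequality lets the quadratic kinetic part of $Q$ dominate. The (possibly negative) point-interaction contribution in $Q$ is absorbed through the trace inequality $\abs{\varphi(v)}^2\leq\eps\norm{\varphi'}_{L^2}^2+C_\eps\norm{\varphi}_{L^2}^2$, so $E$ is coercive on the mass sphere and minimizing sequences are bounded in $H^1_D(\mathcal G)$. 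I would then run concentration--compactness on $\rho_n=\abs{\varphi_n}^2$ and exclude vanishing and dichotomy. Vanishing would force $\norm{\varphi_n}_{L^{p+1}}\to 0$ and hence $\liminf E(\varphi_n)\geq 0$, contradicting $I_m<0$, where the strict inequality $I_m<0$ comes from a trial function concentrated at the vertex, for which the attractive interaction term is strictly negative. Dichotomy is ruled out by the strict binding inequality $I_m<I_{m_1}+I^\infty_{m-m_1}$ for $0<m_1<m$, where $I^\infty$ denotes the free-line level felt by any mass escaping to infinity; subcriticality supplies the scaling inequality making a splitting strictly costly, while the attractive interaction gives the extra gain $I_m<I^\infty_m$. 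The surviving compactness, together with weak lower semicontinuity of $E$, produces a minimizer $\phi$.

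For uniqueness, any minimizer solves $H\phi+\omega\phi=\abs{\phi}^{p-1}\phi$ with Lagrange multiplier $\omega>0$, so it decays exponentially and, after gauge-fixing, may be taken real on each edge. On each semi-infinite edge the decaying $H^1$ solutions of $-\phi_e''+\omega\phi_e=\abs{\phi_e}^{p-1}\phi_e$ are exactly the translated solitons $\pm\bigl[\tfrac{(p+1)\omega}{2}\bigr]^{1/(p-1)}\sech^{2/(p-1)}\bigl(\tfrac{(p-1)\sqrt{\omega}}{2}(x-x_e)\bigr)$. Substituting these into the vertex conditions reduces the problem to a finite algebraic system for the shifts $x_e$ and the multiplier $\omega$, which I would solve case by case: the $\delta$ condition forces a single symmetric bump, while the $\delta'$ and dipole conditions may select sign-changing profiles but still determine the configuration uniquely up to the residual symmetries (gauge always, and translation where the interaction does not break it).

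The main obstacle is the compactness step, and specifically the two strict inequalities $I_m<0$ and $I_m<I_{m_1}+I^\infty_{m-m_1}$: because the point interaction breaks translation invariance one cannot argue purely by translation and must carefully compare the constrained energy with the problem at infinity. A secondary difficulty is the uniqueness analysis, where the vertex-matching system has to be solved in each of the three regimes and the possibility of sign-changing minimizers (notably for the $\delta'$ condition) must be handled rather than assuming positivity.
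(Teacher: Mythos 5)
This proposition is not proved in the paper: it is recalled verbatim from the literature, with the proof attributed to~\cite{AdNoVi13}. Your proposal must therefore be judged against the known arguments rather than against an in-paper proof. In outline your two-step strategy (concentration--compactness for existence, explicit soliton matching at the vertex for classification) is the standard one, but two of the steps you flag as routine are in fact where the real work lies, and as written they do not close.

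First, the compactness step. Your mechanism for excluding dichotomy is the strict inequality $I_m<I^\infty_m$ coming from ``the attractive interaction''. This works for the attractive $\delta$ and $\delta'$ cases (a folded soliton makes the vertex term strictly negative), but the dipole condition contributes no interaction term at all to the quadratic form: $Q$ is just $\tfrac12\norm{\varphi'}_{L^2}^2$ restricted to the subspace $\varphi_{e_1}(v)+\tau\varphi_{e_2}(v)=0$, so there is no sign to exploit and the gain, if any, must come from an asymmetric splitting of mass between the two edges, which requires a separate computation. Worse, for $|\tau|=1$ the dipole graph is gauge-equivalent to the free line, so $I_m=I^\infty_m$ exactly and your strict inequality is false; existence still holds there, but only modulo translation (which is precisely why the statement says ``up to phase shift and translation''), and concentration--compactness must be run in its translation-invariant form for that subcase. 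Second, the uniqueness step. You claim that substituting shifted solitons into the vertex conditions ``still determine[s] the configuration uniquely up to the residual symmetries''. This is not so: for the $\delta'$ condition the matching system admits, at a given mass beyond a threshold, both an odd branch and an asymmetric branch of stationary states --- the paper itself exhibits both explicitly in Section~\ref{sec:2edges} and computes their distinct energies. Uniqueness of the \emph{minimizer} therefore requires an explicit energy comparison among the finitely many branches at fixed mass (and a discussion of what happens at the bifurcation value), not merely solving the algebraic system. Until the strict binding inequalities are actually established case by case and the branch comparison is carried out, the argument is a programme rather than a proof.
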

A detailed review of these results as well as announcement of new
results can be found in~\cite{AdBoRu20}. The minimizer is in fact
explicitly known, and we use its explicit form in Section
\ref{sec:2edges} to compare the outcome of our numerical experiences
with the theoretical ground states.

\subsubsection{General non-compact graphs with Kirchhoff condition}
The existence of ground states with prescribed mass for the focusing nonlinear
Schr\"odinger equation on non-compact graphs $\mathcal{G}$ equipped with
Kirchhoff boundary conditions is linked to the topology of the graph. Actually, a
topological hypothesis, usually referred to as Assumption {(H)} can prevent a graph from having ground states for every
value of the mass (see~\cite{AdSeTi17b} for a review). For the sake of clarity,
we recall that a \textsl{trail} in a graph is a path made of adjacent edges, in
which every edge is run through exactly once. In a trail, vertices can be run
through more than once. The Assumption {(H)} has many formulations
(see \cite{AdSeTi17b}) but we give here only the following one.

\begin{assumption}[Assumption (H)]
  \label{ass:H}
  Every $x\in \mathcal{G}$ lies in a trail that contains two half-lines.
\end{assumption}

Under Assumption \ref{ass:H} {(H)}, no global minimizer exists, unless $\mathcal G$
is (up to symmetries) isomorphic to $\R$ (note that this assumption does not prevent the existence of local minimizers).
Let us consider for example a general
$N$-edges star-graph $\mathcal{G}$ (see Figure~\ref{fig:N_star_graph}). 
The $N$ star-graph with $N>2$ 
verifies Assumption~\ref{ass:H} {(H)}, so there are no ground states in this case
without adding more constraints. Another example satisfying Assumption~\ref{ass:H} {(H)} is the triple bridge $\mathcal{B}_3$ (represented in Figure~\ref{fig:three_bridge}).
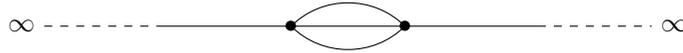
\begin{figure}[htbp!]
  \centering
  \begin{tikzpicture}
    \node[left] at (-4,0) {$\infty$};
    \node[right] at (4,0) {$\infty$};
    \draw[dashed] (-4,0) to (-2.5,0);
    \draw (-2.5,0) to (2.5,0);
    \draw[dashed] (2.5,0) to (4,0);
    \draw (-.75,0) to [bend right=45] (.75,0);
    \draw (-.75,0) to [bend left=45] (.75,0);
    \node at (-.75,0) {$\bullet$};
    \node at (.75,0) {$\bullet$};
  \end{tikzpicture}
  \caption{The $3$-bridge $\mathcal{B}_3$}
  \label{fig:three_bridge}
\end{figure}
When we are searching to obtain ground
states, we  consider graphs violating Assumption~\ref{ass:H} {(H)},
for example the
signpost graph or a line with a tower of bubbles (Figure~\ref{fig:signpost_bubbles}).

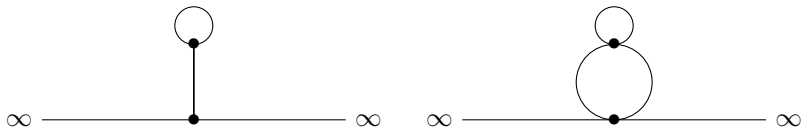
\begin{figure}[htbp!]
  \centering
  \begin{tabular}{cc}
    \begin{tikzpicture}
      \node[left] at (-2,0) {$\infty$};
      \node[right] at (2,0) {$\infty$};
      \draw (-2,0) -- (0,0) -- (0,1) -- (0,0) -- (2,0);
      \node at (0,0) {$\bullet$};
      \node at (0,1) {$\bullet$};
      \draw (0,1.25) circle (0.25);
    \end{tikzpicture}&
    \begin{tikzpicture}
      \node[left] at (-2,0) {$\infty$};
      \node[right] at (2,0) {$\infty$};
      \draw (-2,0) -- (0,0)  -- (2,0);
      \node at (0,0) {$\bullet$};
      \draw (0,0.5) circle (0.5);
      \node at (0,1) {$\bullet$};
      \draw (0,1.25) circle (0.25);
    \end{tikzpicture}\\
  \end{tabular}
  \caption{Line with a signpost graph (left) and with a tower of bubbles (right)}
  \label{fig:signpost_bubbles}
\end{figure}

\section{Continuous normalized gradient flow}
\label{sec:cngf}

We want here to show that, when the standing wave profile $\phi$ is a strict local minimizer for the energy on fixed mass, the corresponding continuous normalized gradient flow (i.e. the gradient flow of the energy projected on the mass constraint) converges towards $\phi$.

The continuous normalized gradient flow is defined by
\begin{equation}
  \label{eq:CNGF}
    \partial_t\psi=-E'(\psi)+\dual{E'(\psi)}{\frac{\psi}{\norm{\psi}_{L^2}}}\frac{\psi}{\norm{\psi}_{L^2}},\quad     \psi(t=0)=\psi_0,
\end{equation}
where $\psi=\psi(t,\cdot)$.
It is the projection of the usual gradient flow
\[
\partial_t\psi=-E'(\psi)
\]
on the $L^2$ sphere
\[
  \mathcal S_{\psi_0}=\{ u\in H^1_D(\mathcal G) :\norm{u}_{L^2}=\norm{\psi_0}_{L^2}\}.
\]  

Let $\phi\in H^1_D(\mathcal G)$ be a standing wave profile solution of~\eqref{eq:snls}.  We define the linearized action operator $L_+$ around $\phi$ by
\begin{equation}
  \label{eq:L_+}
  \begin{aligned}
    L_+:D(H)\subset L^2(\mathcal G)&\to L^2(\mathcal G),\\
    u&\mapsto Hu+\omega u-f'(\phi)u.
  \end{aligned}
\end{equation}
We will assume that the bound state $\phi$ is a strict local minimizer of the energy on fixed $L^2$-norm, which translates for $L_+$ into the following assumption.
  \begin{assumption}
    \label{ass:coercivity}
    There exists $\kappa>0$ such that for any $\varphi\in D(H)$ verifying
    \[
\scalar{\varphi}{\phi}_{L^2}=0,
\]
we have
\[
\scalar{L_+\varphi}{\varphi}_{L^2}\geq \kappa\norm{\varphi}_{H^1}^2.
  \]
  \end{assumption}

Since the pioneering work of Weinstein~\cite{We85}, this assumption is
well known to hold (if one removes translations and phase shifts) in
the classical case of Schr\"odinger equations on $\R^d$ with
subcritical power-nonlinearities ($f(\varphi)=|\varphi|^{p-1}\varphi$,
$1<p<1+4/d$). It is has also been established in many different cases,
for example in~\cite{IaLeRo17,LeFuFiKsSi08} in the case of the $2$
branches star graph with $\delta$ conditions on the vertex (which is
equivalent to the line with a Dirac potential) or in~\cite{GuLeTs17}
in the case of a $1$-loop graph with Kirchhoff conditions at the vertex
(which is equivalent to an interval with periodic boundary
conditions). Observe that a local minimizer is not necessarily a global
minimizer (see e.g.~\cite{PiSoVe20}).
  
Our main result in this section is the following.

\begin{theorem}
  \label{thm:convergence}
  Let the nonlinearity $f$ and the bound state $\phi$ be such that Assumption~\ref{ass:nonlinearity} and Assumption~\ref{ass:coercivity} hold. 
  Then for every  $0<\mu<\kappa$ (where $\kappa$ is the coercivity constant of Assumption~\ref{ass:coercivity}) there exist $\eps>0$ and $C>0$ such that for every $\psi_0\in H^1_D(\mathcal G)$ such that
  \[
\norm{\psi_0-\phi}_{H^1}<\eps
\]
the unique solution $\psi\in\mathcal C([0,T),H^1_D(\mathcal G))$ of~\eqref{eq:CNGF} is global (i.e. $T=\infty$) and converges to $\phi$ exponentially fast: for every $t\in[0,\infty)$ we  have
\[
\norm{\psi(t)-\phi}_{H^1}<Ce^{-\mu t}\norm{\psi_0-\phi}_{H^1}.
  \]
\end{theorem}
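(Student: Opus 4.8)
The plan is to treat \eqref{eq:CNGF} as a constrained gradient flow and use the energy gap $y(t)=E(\psi(t))-E(\phi)$ as a Lyapunov functional, invoking Assumption~\ref{ass:coercivity} twice: once to control $y$ from above and below by $\norm{\psi-\phi}_{H^1}^2$, and once, in a sharper spectral form, to establish a gradient-domination ({\L}ojasiewicz--Polyak-type) inequality $\norm{P_\psi E'(\psi)}_{L^2}^2\geq 2\mu\,y$ forcing $y$ to decay like $e^{-2\mu t}$. Writing $P_\psi u=u-\scalar{u}{\psi/\norm{\psi}_{L^2}}\psi/\norm{\psi}_{L^2}$ for the $L^2$-orthogonal projection onto the tangent space of the sphere at $\psi$, equation \eqref{eq:CNGF} reads $\partial_t\psi=-P_\psi E'(\psi)$; testing against $\psi$ gives $\tfrac{d}{dt}M(\psi)=0$. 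Since $M(\psi(t))\equiv M(\psi_0)$ while the claimed limit has mass $M(\phi)$, the relevant data satisfy $M(\psi_0)=M(\phi)=:m$, so I work on $\mathcal{S}_{\psi_0}=\{M=m\}$. Local well-posedness in $H^1_D(\mathcal G)$ follows from the parabolic structure: $-H$ is self-adjoint, bounded below, hence generates an analytic semigroup on $L^2(\mathcal G)$ with form domain $H^1_D(\mathcal G)$, and by Assumption~\ref{ass:nonlinearity} together with the one-dimensional embedding $H^1\hookrightarrow L^\infty$ the nonlinear terms map $H^1_D$ Lipschitz-continuously into $L^2$; a Duhamel fixed point yields a unique maximal solution $\psi\in\mathcal C([0,T),H^1_D)$ that smooths instantly, so $\psi(t)\in D(H)$ for $t>0$ and $t\mapsto E(\psi(t))$ is $\mathcal C^1$ with $\tfrac{d}{dt}E(\psi)=-\norm{P_\psi E'(\psi)}_{L^2}^2\leq 0$.

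Next I would show that the energy gap is comparable to the squared $H^1$ distance. Set $v=\psi-\phi$ and split $v=v_\parallel+v_\perp$ with $v_\parallel$ along $\phi$ and $\scalar{v_\perp}{\phi}_{L^2}=0$. The constraint $\norm{\phi+v}_{L^2}^2=m$ gives $\scalar{v}{\phi}_{L^2}=-\tfrac12\norm{v}_{L^2}^2$, so $\norm{v_\parallel}_{H^1}=O(\norm{v}_{H^1}^2)$ and $v_\perp$ carries $v$ to leading order. Since $E\in\mathcal C^2(H^1_D)$ with $E'(\phi)=-\omega\phi$ and $E''(\phi)=H-f'(\phi)=L_+-\omega$, Taylor expansion and $\scalar{v}{\phi}_{L^2}=O(\norm{v}_{L^2}^2)$ give $E(\psi)-E(\phi)=\tfrac12\scalar{L_+v_\perp}{v_\perp}_{L^2}+o(\norm{v}_{H^1}^2)$. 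Extending Assumption~\ref{ass:coercivity} from $D(H)$ to $\{\varphi\in H^1_D:\scalar{\varphi}{\phi}_{L^2}=0\}$ by density then yields $c_1\norm{v}_{H^1}^2\leq E(\psi)-E(\phi)\leq c_2\norm{v}_{H^1}^2$ for $\norm{v}_{H^1}$ small.

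The crux, and where I expect the \emph{real} difficulty, is the gradient-domination inequality. Linearizing as above one finds $P_\psi E'(\psi)=\Pi L_+v_\perp+r$, where $\Pi$ is the $L^2$-projection onto $\phi^{\perp}$ and $\norm{r}_{L^2}=o(\norm{v}_{H^1})$. Let $A=\Pi L_+\Pi$ be the self-adjoint operator on the Hilbert space $\phi^{\perp}$ whose quadratic form is the restriction of $\scalar{L_+\cdot}{\cdot}_{L^2}$; Assumption~\ref{ass:coercivity} says precisely $A\geq\kappa$, so the spectral theorem gives $\norm{A v_\perp}_{L^2}^2=\scalar{A^2v_\perp}{v_\perp}_{L^2}\geq\kappa\scalar{Av_\perp}{v_\perp}_{L^2}$. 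As $\scalar{Av_\perp}{v_\perp}_{L^2}=\scalar{L_+v_\perp}{v_\perp}_{L^2}\geq\kappa\norm{v_\perp}_{H^1}^2$, Cauchy--Schwarz forces $\norm{\Pi L_+v_\perp}_{L^2}\geq\kappa\norm{v_\perp}_{H^1}$, so the remainder $r$ is negligible against the main term, and for any $\mu<\kappa$ I can fix the radius $\eps$ so small that $\norm{P_\psi E'(\psi)}_{L^2}^2\geq(1-\eta)\norm{\Pi L_+v_\perp}_{L^2}^2\geq 2\mu\,(E(\psi)-E(\phi))$. The delicate points are that $\phi^{\perp}$ is not $L_+$-invariant (handled by passing to the compression $A$ and arguing with quadratic forms), that the dissipation lives in $L^2$ whereas coercivity is an $H^1$ statement (reconciled via $\norm{\Pi L_+v_\perp}_{L^2}\geq\kappa\norm{v_\perp}_{H^1}$), and the uniform $L^2$-control of $r$, which rests on the instantaneous $D(H)$-smoothing and the growth bounds of Assumption~\ref{ass:nonlinearity}.

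Finally I would close by a continuation/trapping scheme. The last two displayed bounds give $\dot y\leq-2\mu y$ as long as $\norm{v}_{H^1}<\eps$, hence $y(t)\leq y(0)e^{-2\mu t}$; combined with $c_1\norm{v(t)}_{H^1}^2\leq y(t)\leq y(0)\leq c_2\norm{\psi_0-\phi}_{H^1}^2$ this keeps $\norm{\psi(t)-\phi}_{H^1}<\eps$ on the whole existence interval once $\norm{\psi_0-\phi}_{H^1}$ is small enough, so the solution never leaves the ball where the estimates hold and is therefore global. The same chain gives $\norm{\psi(t)-\phi}_{H^1}\leq\sqrt{c_2/c_1}\,e^{-\mu t}\norm{\psi_0-\phi}_{H^1}$, which is the claim with $C=\sqrt{c_2/c_1}$.
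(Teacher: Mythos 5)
Your argument is correct in its essentials, but it takes a genuinely different route from the paper. The paper changes variables via the coordinates-to-data map $\psi=(1+r)\phi+w$ with $w\perp\phi$, derives a closed reduced equation~\eqref{eq:projected-CNGF} for $w$ on the mass sphere (Lemma~\ref{lem:gradient-flow-local}), identifies its linearization with $L_+$ (Lemma~\ref{lem:L_+}), and then runs a Gronwall argument on the quadratic form $\dual{L_+w}{w}$, whose derivative along the reduced flow is $-2\norm{L_+w}_{L^2}^2+2\dual{R(w)}{w}$. You instead keep the full flow and use the energy gap $y=E(\psi)-E(\phi)$ as Lyapunov functional, closing the argument with a Polyak--{\L}ojasiewicz inequality $\norm{P_\psi E'(\psi)}_{L^2}^2\geq 2\mu\,y$ deduced from the operator bound $A=\Pi L_+\Pi\geq\kappa$ on $\phi^{\perp}$. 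The two are close cousins: $\dual{L_+w}{w}=2y+o(\norm{w}_{H^1}^2)$, and both hinge on the same consequence $\norm{L_+w}_{L^2}\geq\kappa\norm{w}_{H^1}$ of Assumption~\ref{ass:coercivity}. What yours buys is that the reduced equation never has to be derived (the most computational part of the paper's proof), and the two-sided comparison $c_1\norm{v}_{H^1}^2\leq y\leq c_2\norm{v}_{H^1}^2$ comes out as a by-product; what it costs is the linearization of $P_\psi E'(\psi)$ around $\phi$, in particular the swap of the moving projector $P_\psi$ for the fixed projector $\Pi$ and the $L^2$-control of the remainder $r$ --- you correctly flag this as the crux, and the required expansions are precisely those of Lemma~\ref{lem:orth-dec-energy} (note that $\scalar{L_+v}{\phi}_{L^2}=\scalar{v}{f(\phi)-f'(\phi)\phi}_{L^2}=O(\norm{v}_{L^2})$ is \emph{not} small, and only the \emph{difference} of the two projections is $O(\norm{v}_{H^1}^2)$, so this step deserves the explicit computation). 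The well-posedness, mass conservation and energy dissipation you invoke are exactly Proposition~\ref{prop:lwp}, and your trapping argument at the end matches the paper's choice of $\eps$ in Proposition~\ref{prop:convergence}; both deliver any rate $\mu<\kappa$.
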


\begin{remark}
In Theorem~\ref{thm:convergence}, we may choose any $\mu>0$ such that $\mu<\kappa$, where $\kappa$ is the coercivity constant in Assumption~\ref{ass:coercivity}. Hence the convergence rate towards the profile $\phi$ depends on the steepness of the energy well around $\phi$.
\end{remark}
The proof of the theorem is divided into three parts. This is the subject of the next three subsections. 

\subsection{Local well-posedness of the continuous normalized gradient flow}

Before proving Theorem~\ref{thm:convergence}, we establish the following local well-posedness result for the continuous normalized gradient flow~\eqref{eq:CNGF}.


  \begin{proposition}
    \label{prop:lwp}
Assume that the nonlinearity $f$ verifies Assumption~\ref{ass:nonlinearity}.
    Then, 
    for any $\psi_0\in H^1_D(\mathcal G)$, there exists a unique maximal solution
    \[
      \psi\in\mathcal C([0,T^{max}), H^1_D(\mathcal G))\cap \mathcal C((0,T^{max}),D(H))\cap \mathcal C^1((0,T^{max}),L^2(\mathcal G))
    \]
    of the continuous normalized gradient flow~\eqref{eq:CNGF} with $T^{max}\in(0,+\infty]$. Moreover, the mass of the solution is preserved and its energy is diminishing, i.e. for all $t\in(0,T^{\max})$ we have
    \[
\norm{\psi(t)}_{L^2}=\norm{\psi_0}_{L^2}, \quad \partial_tE(\psi(t)) =-\norm{\partial_t\psi}_{L^2}^2\leq 0.
\]
    \end{proposition}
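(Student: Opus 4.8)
The plan is to read~\eqref{eq:CNGF} as a semilinear parabolic equation and to invoke the standard theory of analytic semigroups. First I would rewrite the flow as $\partial_t\psi=-A\psi+N(\psi)$, where $A=H+c$ with $c$ chosen large enough that $A$ is a positive self-adjoint operator. Such $c$ exists because the Robin boundary terms in~\eqref{eq:Q} are, via a trace inequality on each edge, a relatively form-bounded perturbation of $\frac12\norm{u'}_{L^2}^2$, so $H$ is bounded below. Then $-A$ generates an analytic semigroup $e^{-tA}$ on $L^2(\mathcal G)$, one has $D(A^{1/2})=H^1_D(\mathcal G)$ with $\norm{A^{1/2}\cdot}_{L^2}\simeq\norm{\cdot}_{H^1}$ on $H^1_D(\mathcal G)$, and the smoothing estimate $\norm{A^{1/2}e^{-tA}}_{L^2\to L^2}\leq Ct^{-1/2}$ holds. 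The nonlinear term is
\[
N(\psi)=c\psi+f(\psi)+\frac{\dual{H\psi}{\psi}-\dual{f(\psi)}{\psi}}{\norm{\psi}_{L^2}^2}\psi.
\]

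The crucial preliminary step is to show that $N$ is well defined and locally Lipschitz as a map from $\{u\in H^1_D(\mathcal G):\norm{u}_{L^2}\geq\delta\}$ into $L^2(\mathcal G)$, for each $\delta>0$. The superlinear term is handled using the embedding $H^1(I_e)\hookrightarrow L^\infty(I_e)$ on each edge together with the growth bounds in Assumption~\ref{ass:nonlinearity}, which give that $\psi\mapsto f(\psi)$ is bounded and Lipschitz on bounded subsets of $H^1(\mathcal G)$, with values in $L^2(\mathcal G)$. The apparently dangerous scalar factor is in fact benign: the pairing $\dual{H\psi}{\psi}$ equals $2Q(\psi)$, a quadratic form controlled by $\norm{\psi}_{H^1}^2$, so the whole coefficient is a real number depending $\mathcal C^1$-smoothly on $\psi$ and bounded on $H^1$-bounded sets provided $\norm{\psi}_{L^2}$ stays away from $0$ --- which is why one restricts to $\psi_0\neq0$, the case $\psi_0=0$ being excluded by the normalization. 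Multiplying this scalar by $\psi\in H^1_D(\mathcal G)$ keeps us in $L^2(\mathcal G)$.

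With these bounds in hand, local existence and uniqueness follow from a Banach fixed point argument applied to the Duhamel formulation
\[
\psi(t)=e^{-tA}\psi_0+\int_0^t e^{-(t-s)A}N(\psi(s))\,ds
\]
in $\mathcal C([0,T],H^1_D(\mathcal G))$: using $\norm{e^{-tA}\psi_0}_{H^1}\lesssim\norm{\psi_0}_{H^1}$ and $\int_0^t(t-s)^{-1/2}\norm{N(\psi(s))}_{L^2}\,ds\lesssim t^{1/2}\sup_s\norm{N(\psi(s))}_{L^2}$, the map is a contraction for $T$ small, and the semigroup automatically carries the integral term into $D(A^{1/2})=H^1_D(\mathcal G)$ so the vertex conditions need not be checked by hand for $f(\psi)$. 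The regularity $\psi\in\mathcal C((0,T^{max}),D(H))\cap\mathcal C^1((0,T^{max}),L^2(\mathcal G))$, and the fact that the mild solution is a genuine solution of~\eqref{eq:CNGF}, then come from the parabolic smoothing theory for analytic semigroups, and a continuation argument produces the maximal time $T^{max}$, with $\norm{\psi(t)}_{H^1}\to\infty$ as $t\to T^{max}$ if $T^{max}<\infty$.

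Finally, the conservation and dissipation laws are formal computations made rigorous by the smoothing: for $t>0$ one has $\psi(t)\in D(H)$, so $E'(\psi)=H\psi-f(\psi)\in L^2(\mathcal G)$ and the pairings below are $L^2$ inner products. Differentiating the mass gives $\frac12\frac{d}{dt}\norm{\psi}_{L^2}^2=\scalar{\partial_t\psi}{\psi}_{L^2}=-\dual{E'(\psi)}{\psi}+\frac{\dual{E'(\psi)}{\psi}}{\norm{\psi}_{L^2}^2}\norm{\psi}_{L^2}^2=0$, so the mass is preserved, and continuity up to $t=0$ extends this to $[0,T^{max})$. For the energy, the chain rule yields $\frac{d}{dt}E(\psi)=\scalar{E'(\psi)}{\partial_t\psi}_{L^2}$, and writing $E'(\psi)=-\partial_t\psi+\lambda(\psi)\psi$ with the real scalar $\lambda(\psi)=\dual{E'(\psi)}{\psi}/\norm{\psi}_{L^2}^2$ gives $\frac{d}{dt}E(\psi)=-\norm{\partial_t\psi}_{L^2}^2+\lambda(\psi)\scalar{\psi}{\partial_t\psi}_{L^2}=-\norm{\partial_t\psi}_{L^2}^2$, where the last term vanishes precisely by mass conservation. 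The main obstacle is the careful verification of the local Lipschitz estimate for $N$, in particular controlling the superlinear term through the edgewise Sobolev embeddings and the growth hypotheses, and handling the nonlocal, $\norm{\psi}_{L^2}^{-2}$-weighted projection term; once $N$ is under control, the remaining steps are standard analytic-semigroup arguments.
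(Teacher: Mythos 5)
Your proof is correct, and it reaches the same conclusion by the same general machinery (semilinear parabolic theory for $H$ plus a locally Lipschitz perturbation, Sobolev embedding $H^1(I_e)\hookrightarrow L^\infty(I_e)$ for the superlinear term, and the identical formal computations for mass conservation and energy dissipation made rigorous by parabolic smoothing). The one genuine structural difference lies in how the normalization term is tamed. You keep the factor $\norm{\psi}_{L^2}^{-2}$ in the nonlinearity and restrict to the set $\{\norm{u}_{L^2}\geq\delta\}$, then run the Duhamel fixed point explicitly in $\mathcal C([0,T],D(A^{1/2}))$; this works, but it forces you to carry the lower bound on the $L^2$-norm through the contraction and continuation arguments (and, strictly, the blow-up alternative should a priori include the possibility $\norm{\psi(t)}_{L^2}\to\delta$, which is only ruled out a posteriori by mass conservation). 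The paper instead introduces an intermediate problem in which the denominator is frozen at $\alpha^2=\norm{\psi_0}_{L^2}^2$, so that the nonlinearity $\tilde f(\psi)=f(\psi)+\alpha^{-2}(2Q(\psi)-\int_{\mathcal G}f(\psi)\psi\,dx)\psi$ is Lipschitz on bounded sets of $H^1_D(\mathcal G)$ with no degeneracy issue, cites the standard theory for existence, and then shows by an explicit first-order linear ODE for $\norm{\psi(t)}_{L^2}^2$ that the norm is exactly conserved, so the intermediate flow coincides with~\eqref{eq:CNGF}; uniqueness for~\eqref{eq:CNGF} is then inherited from uniqueness for the frozen problem. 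Your route buys a self-contained construction without the auxiliary equation; the paper's route buys a cleaner separation between well-posedness and norm conservation and avoids any quantifier over $\delta$. Both are valid proofs of the Proposition.
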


  \begin{proof}[Proof of Proposition~\ref{prop:lwp}]
Let $\psi_0\in H^1_D(\mathcal G)$.   We first show the second part of the statement: preservation of the mass. Let $\psi$ be a solution of~\eqref{eq:CNGF} as in the first part of the statement of Proposition~\ref{prop:lwp}.
We have 
\begin{multline*}
  \frac12\partial_t\norm{\psi}_{L^2}^2=\scalar{\partial_t\psi}{\psi}_{L^2}
  =\dual{-E'(\psi)+\dual{E'(\psi)}{\frac{\psi}{\norm{\psi}_{L^2}}}\frac{\psi}{\norm{\psi}_{L^2}}}{\psi}
  \\=
\dual{-E'(\psi)}{\psi} +\frac{1}{\norm{\psi}_{L^2}^2}\dual{E'(\psi)}{\psi}\scalar{\psi}{\psi}_{L^2}
  =0.
  \end{multline*}
  The mass is therefore preserved for~\eqref{eq:CNGF}. Set
  \[
\alpha=\norm{\psi_0}_{L^2}.
\]

We now prove the first part of the statement (existence and uniqueness of a solution).
We first consider the intermediate problem
    \begin{equation}
      \label{eq:5}
      \partial_t\psi=-E'(\psi)+\frac{1}{\alpha^2}\dual{E'(\psi)}{\psi}\psi.
    \end{equation}
The intermediate problem~\eqref{eq:5} can be written more explicitly (using the expression \eqref{eq:derivE} of $E'(\psi)$) as
    \[
      \partial_t\psi=-H\psi+{\tilde f}(\psi),\quad {\tilde f}(\psi)=f(\psi)+\frac{1}{\alpha^2}\left(2Q(\psi)-\int_{\mathcal G}f(\psi)\psi dx\right)\psi,
    \]
    where $Q$ is the quadratic form associated with $H$ and was defined in~\eqref{eq:Q}. 
    Recall that the operator $H:D(H)\subset L^2(\mathcal G)\to L^2(\mathcal G)$ is self-adjoint. Moreover, there exists $\lambda>0$ such that $H\geq -\lambda$ (this might be seen from the expression of $Q$ given in~\eqref{eq:Q} and the injection of $H^1(\mathcal G)$ into $L^\infty(\mathcal G)$).

Since $f$ verifies Assumption~\ref{ass:nonlinearity}, the nonlinearity ${\tilde f}:H^1_D(\mathcal G)\to H^1_D(\mathcal G)$ is continuous, and, as a function ${\tilde f}:H^1_D(\mathcal G)\to L^2(\mathcal G)$, it is Lipschitz continuous on bounded sets. 
Indeed, for any $z_1,z_2\in \mathbb C$ we have 
\[
|f(z_1)-f(z_2)|\lesssim (1+|z_1|^{p-1}+|z_2|^{p-1})|z_1-z_2|.
\]
Therefore,
for any $M>0$  and for any $\psi_1,\psi_2\in H^1_D(\mathcal G)$ such that $\norm{\psi_1}_{H^1}+\norm{\psi_2}_{H^1}<M$, we have
\[
   \norm{f(\psi_1)-f(\psi_2)}_{L^2}\leq C(M)  \norm{\psi_1-\psi_2}_{H^1},
 \]
 and a similar estimate holds for ${\tilde f}$.

    The existence of the desired solution then follows from classical
    results in the theory of semilinear parabolic problems (see e.g.
    \cite{Lu95,QuSo19}).
    More precisely, there exists a unique
        \[
      \psi\in\mathcal C([0,T^{max}), H^1_D(\mathcal G))\cap \mathcal C((0,T^{max}),D(H))\cap \mathcal C^1((0,T^{max}),L^2(\mathcal G))
    \]
    solution of~\eqref{eq:5} with $\psi(0)=\psi_0$.

    Given $\psi$, we now go back to the continuous normalized gradient flow~\eqref{eq:CNGF} by proving that $t\to\norm{\psi}_{L^2}$ is constant along the evolution in time. We have by direct calculations on~\eqref{eq:5}
\begin{equation*}
  \frac12\partial_t\norm{\psi}_{L^2}^2=\scalar{\partial_t\psi}{\psi}_{L^2}
  =\dual{-E'(\psi)}{\psi}+\frac{1}{\alpha^2}\dual{E'(\psi)}{\psi}\norm{\psi}_{L^2}^2
  =
\frac{1}{\alpha^2}\dual{E'(\psi)}{\psi} \left(\norm{\psi}_{L^2}^2-\alpha^2\right).
\end{equation*}
This is a first order linear ordinary differential equation in $\norm{\psi}_{L^2}^2$ which may be solved explicitly: 
\[
\norm{\psi(t)}_{L^2}^2=\alpha^2+\left(\norm{\psi(0)}_{L^2}^2-\alpha^2\right)\exp\left(\frac{2}{\alpha^2}\int_0^t \dual{E'(\psi(s))}{\psi(s)}ds\right ).
\]
Since $\norm{\psi_0}_{L^2}=\alpha$, this indeed gives 
\[
\norm{\psi(t)}_{L^2}^2=\alpha^2
\]
for any $t\in [0,T^{\max})$. Therefore $\psi$ is also a solution of~\eqref{eq:CNGF}. Uniqueness of such a solution is a direct consequence of the uniqueness for~\eqref{eq:5} and the preservation of the mass. 

  Finally, we establish the energy diminishing property. Using~\eqref{eq:CNGF} to replace $E'(\psi)$, we have 
  \begin{align*} \partial_tE(\psi(t))=\scalar{E'(\psi)}{\partial_t\psi}_{L^2}&=-\scalar{\partial_t\psi}{\partial_t\psi}_{L^2}-\frac{1}{\norm{\psi}_{L^2}^2}\dual{E'(\psi)}{\psi}\scalar{\psi}{\partial_t\psi}_{L^2}
  \\ &=-\norm{\partial_t\psi}_{L^2}^2\leq 0,
  \end{align*}
  where we have used the conservation of the mass in the form $\scalar{\psi}{\partial_t\psi}_{L^2}=0$ to obtain the last equality. This concludes the proof. 
\end{proof}

Having established local well posedness of the continuous normalized gradient flow~\eqref{eq:CNGF}, we turn our attention to the evolution for initial data in the vicinity of the bound state $\phi$.

\subsection{The normal part of the continuous normalized gradient flow}



Given the bound state $\phi\in H^1_D(\mathcal G)$, 
we define a Hilbert subspace $W$ of $H^1_D(\mathcal G)$ by
  \[
    W=\{w\in H^1_D(\mathcal G):\scalar{w}{\phi}_{L^2}=0\}.
  \]
We define the coordinates-to-data map $\chi:\R\times W\to H^1_D(\mathcal G)$ by
  \[
\chi(r,w)=(1+r)\phi+w.
\]
The map $\chi$ is smooth and has bounded derivatives. Its inverse is the data-to-coordinates map $\chi^{-1}:H^1_D(\mathcal G)\to \R\times W$ which is explicitly given by
    \begin{equation}
      \label{eq: data-to-coordinates}
\chi^{-1}(\psi)=(r(\psi),w(\psi))=\left(\frac{\scalar{\psi}{\phi}_{L^2}}{\|\phi\|^2_{L^2}}-1,\psi-\frac{\scalar{\psi}{\phi}_{L^2}}{\|\phi\|^2_{L^2}}\phi\right).
\end{equation}
As $\chi$, the map $\chi^{-1}$ is smooth and has bounded derivatives.

The second step of the proof of Theorem~\ref{thm:convergence} is to decompose the continuous normalized gradient flow~\eqref{eq:CNGF} by projecting it on $W$ and $\phi$, as is done in the following proposition.

\begin{proposition}
\label{prop:CNGF-decomposition}
  Let $T>0$ and $\psi\in\mathcal C((0,T),D(H))\cap\mathcal C^1((0,T),L^2(\mathcal G))$ be a solution of~\eqref{eq:CNGF} such that $\norm{\psi}_{L^2}=\norm{\phi}_{L^2}$ and decompose $\psi$ using the data-to-coordinates map
$\chi^{-1}(\psi(t))=(r(t),w(t))\in \R\times D(H)$ given  by~\eqref{eq: data-to-coordinates}:
\[
\psi(t)=(1+r(t))\phi+w(t).
\]
Then we have
\[
\partial_tw=-L_+w+o(w),\;{ \textrm{in } W',\quad\textrm{and}}\quad r=O(\norm{w}_{L^2}^2),
\]
where $W'$ is the dual of $W$ and $L_+$ was defined in~\eqref{eq:L_+}.
\end{proposition}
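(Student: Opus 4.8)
The plan is to use that passing from $\psi$ to its normal component $w$ is nothing but the fixed $L^2$-orthogonal projection $P$ onto $W$, namely $Pu=u-\frac{\scalar{u}{\phi}_{L^2}}{\norm{\phi}_{L^2}^2}\phi$, so that $w(t)=P\psi(t)$ and, $P$ being linear and time-independent, $\partial_tw=P\partial_t\psi$. Since the correcting term in~\eqref{eq:CNGF} is a scalar multiple of $\psi$ and $P\psi=w$, applying $P$ to~\eqref{eq:CNGF} and using $\norm{\psi}_{L^2}=\norm{\phi}_{L^2}=:\alpha$ would give the exact identity
\[
\partial_tw=-PE'(\psi)+\frac{1}{\alpha^2}\dual{E'(\psi)}{\psi}\,w.
\]
The whole proof then reduces to expanding the right-hand side to first order in $w$.

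The estimate $r=O(\norm{w}_{L^2}^2)$ is purely algebraic and follows from mass conservation alone. As $\scalar{w}{\phi}_{L^2}=0$, the Pythagorean identity gives $\norm{\psi}_{L^2}^2=(1+r)^2\norm{\phi}_{L^2}^2+\norm{w}_{L^2}^2$; equating this to $\alpha^2=\norm{\phi}_{L^2}^2$ yields $(1+r)^2=1-\norm{w}_{L^2}^2/\alpha^2$, hence $r=\sqrt{1-\norm{w}_{L^2}^2/\alpha^2}-1=O(\norm{w}_{L^2}^2)$ for $w$ small.

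Next I would Taylor expand $E'$ about $\phi$. Since $E$ is $\mathcal C^2$ on $H^1_D(\mathcal G)$ with $E'(\phi)=H\phi-f(\phi)=-\omega\phi$ by~\eqref{eq:snls} and $E''(\phi)=H-f'(\phi)=L_+-\omega$ by~\eqref{eq:L_+}, writing $\psi-\phi=r\phi+w$ gives, with remainders understood as $o(\norm{w}_{H^1})$ in $W'$,
\[
E'(\psi)=-\omega\phi+(L_+-\omega)(r\phi+w)+o(w)=-\omega\psi+L_+w+o(w),
\]
where I absorbed $rL_+\phi=O(\norm{w}_{L^2}^2)$ into the remainder because $L_+\phi$ is a fixed function. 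Applying the bounded projection $P$ (with $P\phi=0$, $P\psi=w$) gives $PE'(\psi)=-\omega w+PL_+w+o(w)$. For the scalar factor I would pair the same expansion with $\psi$: using $\norm{\psi}_{L^2}=\alpha$ and $\dual{L_+w}{\psi}=O(\norm{w}_{H^1})$ (since $\dual{L_+w}{\phi}=\scalar{w}{L_+\phi}_{L^2}=O(\norm{w}_{L^2})$ and $\dual{L_+w}{w}=O(\norm{w}_{H^1}^2)$), I get $\frac{1}{\alpha^2}\dual{E'(\psi)}{\psi}=-\omega+O(\norm{w}_{H^1})$, so that $\frac{1}{\alpha^2}\dual{E'(\psi)}{\psi}\,w=-\omega w+o(w)$. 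Substituting both computations into the identity above, the two $-\omega w$ terms cancel and I am left with $\partial_tw=-PL_+w+o(w)$.

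The final and most delicate point is to identify $PL_+w$ with $L_+w$ \emph{in $W'$}: for any test function $v\in W$ one has $Pv=v$, and since $P$ is $L^2$-self-adjoint, $\scalar{PL_+w}{v}_{L^2}=\scalar{L_+w}{Pv}_{L^2}=\scalar{L_+w}{v}_{L^2}$, so the two functionals agree on $W$. This yields the claimed $\partial_tw=-L_+w+o(w)$ in $W'$. The main obstacle is the remainder bookkeeping: one must check, uniformly for $\psi$ in a small $H^1$-ball around $\phi$, that the second-order Taylor remainder of $E'$ and the cross terms (such as $\dual{L_+w}{\phi}$, which on their own are only $O(\norm{w}_{L^2})$) genuinely collapse into $o(\norm{w}_{H^1})$ once combined, relying on the continuity of $E''$ afforded by Assumption~\ref{ass:nonlinearity} and on the product structure $O(\norm{w}_{H^1})\cdot w$. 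I should also keep all pairings real, so that $L_+$ is self-adjoint and the projection argument in $W'$ is legitimate.
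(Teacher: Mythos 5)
Your proof is correct, and it reaches the paper's conclusion by a genuinely more direct route. You apply the fixed $L^2$-orthogonal projector $P$ onto $W$ directly to~\eqref{eq:CNGF}, Taylor-expand $E'$ around $\phi$ using~\eqref{eq:snls} and $E''(\phi)=L_+-\omega$, observe the cancellation of the $-\omega w$ terms between $PE'(\psi)$ and the normalization term $\alpha^{-2}\dual{E'(\psi)}{\psi}\,w$, and finally identify $PL_+w$ with $L_+w$ in $W'$ by self-adjointness of $P$. The paper instead introduces the local parametrization $w\mapsto\chi(r_W(w),w)$ of the mass sphere and the restricted energy $E_W(w)=E(\chi(r_W(w),w))$, first proves the \emph{exact} projected equation $\partial_tw=-D_wE_W(w)+\norm{\phi}_{L^2}^{-2}\dual{D_wE_W(w)}{w}\,w$ (Lemma~\ref{lem:gradient-flow-local}), and only then linearizes $D_wE_W$ to $L_+$ (Lemma~\ref{lem:L_+}). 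The analytic inputs are identical in both arguments: the stationary equation for $\phi$, the $\mathcal C^2$ regularity of $E$ (for the uniform Taylor remainder you rightly flag), and the bound $r=O(\norm{w}_{L^2}^2)$ from mass conservation, which you derive exactly as in~\eqref{eq:estim_rW}. What the paper's longer route buys is the exact identity~\eqref{eq:projected-CNGF}, which is reused verbatim in the proof of Proposition~\ref{prop:convergence} (the remainder $R(w)$ in~\eqref{eq:13} is defined relative to $D_wE_W$, and the Lyapunov computation differentiates $\dual{L_+w}{w}$ along that exact equation). Your approach supplies an equally serviceable exact identity, namely $\partial_tw=-PE'(\psi)+\alpha^{-2}\dual{E'(\psi)}{\psi}\,w$, so the downstream argument goes through unchanged provided you record that identity rather than only its first-order form; with that caveat the two proofs are interchangeable, yours being shorter and the paper's making the gradient-flow structure on the sphere explicit.
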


The proof of Proposition~\ref{prop:CNGF-decomposition} is divided into three steps. 
In the first step we will  consider the orthogonal decomposition of the flow along $\phi$ and $W$. In the second step we will  project this orthogonal decomposition on the $L^2$-sphere. The third and last step will make the link between the projected normalized energy derivative and the linearized action operator $L_+$.

\subsubsection{Step 1: Orthogonal Decomposition}

We first consider the orthogonal decomposition of the energy.

Consider the functional $\EnergyOD :\R\times W\to \R$ defined for $(r,w)\in\R\times W$ by
\[
\EnergyOD (r,w)=(E\circ \chi)(r,w)=E(\chi(r,w)).
\]
\begin{lemma}[Orthogonal decomposition of the energy]
  \label{lem:orth-dec-energy}
The functional $\EnergyOD $ is differentiable and we have
the following estimates
  \begin{align*}
     D_r\EnergyOD (r,w)&=-\omega\norm{\phi}_{L^2}^2-\scalar{w}{f(\phi)-f'(\phi)\phi}_{L^2}+O(r)+o(\norm{w}_{L^2}),\\
    D_w \EnergyOD (r,w)
      &=Hw -f'(\phi)w+O(r)+o(w).
  \end{align*}
\end{lemma}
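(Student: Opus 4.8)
The plan is to exploit that $E$ is $\mathcal C^2$ on $H^1_D(\mathcal G)$ with $E'(u)=Hu-f(u)$ (see~\eqref{eq:derivE}) and that $\chi(r,w)=(1+r)\phi+w$ is smooth and affine in $(r,w)$. Since $\EnergyOD=E\circ\chi$ is the composition of a $\mathcal C^2$ map with a smooth one, it is differentiable, and the chain rule gives $D_r\EnergyOD(r,w)=\dual{E'((1+r)\phi+w)}{\phi}$ (because $\partial_r\chi=\phi$), together with $D_w\EnergyOD(r,w)=E'((1+r)\phi+w)\big|_W\in W'$ (because $\partial_w\chi$ is the inclusion of $W$). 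Everything then reduces to expanding $E'((1+r)\phi+w)=H((1+r)\phi+w)-f((1+r)\phi+w)$ to first order in the small perturbation $r\phi+w$.

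The three structural inputs I would use are: the profile equation~\eqref{eq:snls} in the form $H\phi=f(\phi)-\omega\phi$; the self-adjointness of $H$ together with $\phi\in D(H)$, which lets me transfer $H$ onto $\phi$ in any pairing, $\dual{Hw}{\phi}=\scalar{w}{H\phi}_{L^2}$; and the defining orthogonality $\scalar{w}{\phi}_{L^2}=0$ of $W$. Expanding the linear part via $H\phi=f(\phi)-\omega\phi$ and the nonlinear part through the first-order Taylor formula $f(\phi+h)=f(\phi)+f'(\phi)h+\rho(h)$ (for real arguments, with $f'(\phi)=2g'(\phi^2)\phi^2+g(\phi^2)$), I would collect by order: terms carrying an explicit factor $r$ go into $O(r)$, the quadratic-and-higher contributions of the remainder $\rho$ go into the little-$o$ terms, and the surviving linear-in-$w$ pieces recombine, through the self-adjointness step, to reproduce the term $\scalar{w}{f(\phi)-f'(\phi)\phi}_{L^2}$ of the first estimate, while the constant part collapses to $\dual{E'(\phi)}{\phi}=-\omega\norm{\phi}_{L^2}^2$.

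For $D_w$ I would run the same expansion but test $E'((1+r)\phi+w)$ against an arbitrary $h\in W$. The key simplification is that the $w$-independent leading contribution equals $-\omega\phi$ up to an $O(r)$ correction, and $-\omega\phi$ is orthogonal to $W$, hence vanishes in $W'$; what survives is exactly $Hw-f'(\phi)w$, the $r$-proportional and remainder parts being relegated to $O(r)$ and $o(w)$. This yields the second estimate.

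The step I expect to be the main obstacle is the rigorous control of the nonlinear remainder, namely showing that $\rho(r\phi+w)=f((1+r)\phi+w)-f(\phi)-f'(\phi)(r\phi+w)$ is genuinely $o(\norm{w}_{L^2})$ (respectively $o(w)$ in $W'$) once its $O(r)$ part is split off. For this I would combine the growth bound $\abs{s^2g'(s^2)}\leq Cs^{p-1}$ and the regularity $g\in\mathcal C^1((0,\infty))$ with $\lim_{s\to0}sg'(s)=0$ from Assumption~\ref{ass:nonlinearity}, together with the embedding $H^1(\mathcal G)\hookrightarrow L^\infty(\mathcal G)$, to estimate the pointwise increment of $f$ and conclude that the $L^2$-norm of $\rho$ is superlinear in $\norm{w}_{H^1}$ uniformly on bounded sets; this superlinearity is precisely what converts $\rho$, after testing against $\phi$ or against $h\in W$, into the claimed little-$o$ terms.
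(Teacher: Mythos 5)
Your proposal follows essentially the same route as the paper's proof: chain rule through the affine map $\chi$, first-order expansion of $E'((1+r)\phi+w)$ using the profile equation $H\phi=f(\phi)-\omega\phi$, self-adjointness to move $H-f'(\phi)$ onto $\phi$, and the orthogonality $\scalar{w}{\phi}_{L^2}=0$ to kill the $-\omega\phi$ contribution in $W'$. Your closing discussion of the nonlinear remainder is in fact slightly more detailed than the paper, which simply invokes $f\in\mathcal C^1$ and the boundedness of $\phi$ at that step.
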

\begin{remark}
$D_w \EnergyOD (r,w)$ is an operator acting on $W\subset H^1_D(\mathcal{G})$. For any element $h\in W$, we will note indifferently $D_w \EnergyOD (r,w)h$ or $\langle D_w \EnergyOD (r,w),h\rangle$ the image of $h$ by $D_w \EnergyOD (r,w)$.
\end{remark}
\begin{proof}
Since $E$  and $\chi$ are differentiable, the functional $\EnergyOD $ is also differentiable and we have
  \begin{align}
    D_r\EnergyOD (r,w)&=E'(\chi(r,w))\circ D_r\chi(r,w)=\dual{E'(\chi(r,w))}{\phi},
                                                         \label{eq:der-tilde-1}\\
    D_w \EnergyOD (r,w)&=E'(\chi(r,w))\circ D_w\chi(r,w)=\dual{E'(\chi(r,w))}{\textrm{Id}_W(\cdot)}.
                                  \label{eq:der-tilde-2}
  \end{align}
We now recall that $\phi\in H^1_D(\mathcal G)$ satisfies~\eqref{eq:snls}.
Given $r\in\R$ and $w\in W$, using~\eqref{eq:derivE}, we
have
\begin{multline*}
  E'(\chi(r,w))=H\chi(r,w)-f(\chi(r,w))
  \\=(1+r)H\phi+Hw-f(\phi)-f'(\phi)(r\phi+w)+o(r\phi+w)
  \\=(1+r)(f(\phi)-\omega\phi)-f(\phi)+Hw -f'(\phi)w  -rf'(\phi)\phi+o(r\phi+w)
  \\=-\omega \phi +Hw -f'(\phi)w +O(r)+o(w).
\end{multline*}
We have used here the fact that $f\in\mathcal C^1$ for the Taylor expansion, and that $\phi$ is bounded. 
We may now use this estimate in~\eqref{eq:der-tilde-1} to obtain
\[
  D_r\EnergyOD (r,w)=\dual{E'(\chi(r,w))}{\phi}=-\omega\norm{\phi}_{L^2}^2+\dual{Hw-f'(\phi)w}{\phi}+O(r)+o(\norm{w}_{L^2}).
  \]
  The operator $H-f'(\phi)$ is self-adjoint and $H\phi-f'(\phi)\phi=-\omega\phi+f(\phi)-f'(\phi)\phi$.
Using $w\in W$ (i.e. $\scalar{w}{\phi}_{L^2}=0$) we obtain
\[
 D_r\EnergyOD (r,w)=-\omega\norm{\phi}_{L^2}^2-\scalar{w}{f(\phi)-f'(\phi)\phi}_{L^2}+O(r)+o(\norm{w}_{L^2}),
\]
which proves the first part of the statement.

From~\eqref{eq:der-tilde-2}, for $h\in W$ we get
    \begin{multline*}
      D_w \EnergyOD (r,w)h=\dual{E'(\chi(r,w))}{h}\\
      = -\omega\dual{\phi}{h}+\dual{Hw -f'(\phi)w}{h} +\dual{O(r)+o(w)}{h}\\
      =\dual{Hw -f'(\phi)w}{h} +\dual{O(r)+o(w)}{h},
    \end{multline*}
    where to get the last line we have used that $h\in W$ and thus
    $\dual{\phi}{h}=\scalar{\phi}{h}_{L^2}=0$. 
This proves the second part of the statement. 
\end{proof}

\subsubsection{Step 2: Projection on the $L^2$-sphere}


We now make the link between the orthogonal decomposition and the mass normalization constraint.

We denote the $L^2$ sphere of radius $\norm{\phi}_{L^2}$ by
\[
\mathcal S_{\phi}=\{v\in H^1_D(\mathcal G):\norm{v}_{L^2}=\norm{\phi}_{L^2}\}.
\]
Consider the open subset of $W$ given by
  \[
  \mathcal O_W  =\{w\in W:\norm{w}_{L^2}<\norm{\phi}_{L^2}\}.
  \]
We define the functional $r_W:  \mathcal O_W\to\R$ for any $w\in  \mathcal O_W$ by the implicit relation
\[
\norm{\chi(r_W(w),w)}_{L^2}=\norm{\phi}_{L^2}. 
\]
  The functional $r_W$ can be made explicit by a direct calculation on the above equality and is given for $w\in \mathcal O_W$ by
\[
r_W(w)=-1+\sqrt{1-\left(\frac{\norm{w}_{L^2}}{\norm{\phi}_{L^2}}\right)^2}.
  \]
 In particular, $r_W$ is well defined and smooth. Moreover, we have in the open set $  \mathcal O_W $ the estimate
 \begin{equation}
\abs{r_W(w)}\leq \left(\frac{\norm{w}_{L^2}}{\norm{\phi}_{L^2}}\right)^2.\label{eq:estim_rW}
\end{equation}
Thus, we have a local parametrization of $\mathcal S_{\phi} $ around $\phi$ given by 
\[
  \begin{aligned}
    \mathcal O_W&\mapsto \mathcal S_{\phi},\\
    w&\to\chi(r_W(w),w).
  \end{aligned}
  \]
Introduce the functional $\EnergyNorm :   \mathcal O_W\to\R$ defined by
  \[
\EnergyNorm (w)=\EnergyOD (r_W(w),w)=E(\chi(r_W(w),w))=E((1+r_W(w))\phi+w).
\]
This functional can be used to describe the dynamics of the projected part of the normalized flow, as is done in the following lemma.

\begin{lemma}[Gradient flow in local variables]
  \label{lem:gradient-flow-local}
Let $w$ be as in Proposition~\ref{prop:CNGF-decomposition}. Then
  $w$ is a solution of
\begin{equation}
  \label{eq:projected-CNGF}
    \partial_tw
=-D_w\EnergyNorm (w)+  \frac{\dual{D_w\EnergyNorm (w)}{w}}{\|\phi\|_{L^2}^2}w,\; \textrm{in }W'.
\end{equation}
\end{lemma}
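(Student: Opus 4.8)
The plan is to use crucially that the solution $\psi$ of~\eqref{eq:CNGF} stays on the sphere $\mathcal S_\phi$, so that the scalar coordinate $r$ is completely determined by $w$, and then to project the flow onto $W$ and recognize the outcome as the gradient of $\EnergyNorm$ corrected by a multiple of $w$.

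First I would observe that $r(t)=r_W(w(t))$. Since $\norm{\psi}_{L^2}=\norm{\phi}_{L^2}$ and $\psi=(1+r)\phi+w$ with $\scalar{w}{\phi}_{L^2}=0$, expanding the norm gives $(1+r)^2\norm{\phi}_{L^2}^2+\norm{w}_{L^2}^2=\norm{\phi}_{L^2}^2$; for $\psi$ near $\phi$ we have $1+r>0$, and this is exactly the relation defining $r_W$, so $\psi(t)=\chi(r_W(w(t)),w(t))$. Next I would project the equation. Writing $\Pi_W u=u-\frac{\scalar{u}{\phi}_{L^2}}{\norm{\phi}_{L^2}^2}\phi$ for the orthogonal $L^2$-projection onto $W$ (bounded on $H^1_D(\mathcal G)$), the data-to-coordinates formula~\eqref{eq: data-to-coordinates} reads $w=\Pi_W\psi$, hence $\partial_tw=\Pi_W\partial_t\psi$. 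Because $\psi\in D(H)$ we have $E'(\psi)=H\psi-f(\psi)\in L^2(\mathcal G)$, so the brackets in~\eqref{eq:CNGF} are genuine $L^2$ products; applying $\Pi_W$ and using $\Pi_W\psi=w$ together with $\norm{\psi}_{L^2}=\norm{\phi}_{L^2}$ gives
\[
\partial_tw=-\Pi_WE'(\psi)+\frac{\scalar{E'(\psi)}{\psi}_{L^2}}{\norm{\phi}_{L^2}^2}w\quad\text{in }W'.
\]

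The final step is to rewrite the right-hand side through $\EnergyNorm$. The chain rule gives, for $h\in W$,
\[
\dual{D_w\EnergyNorm(w)}{h}=\scalar{E'(\psi)}{h}_{L^2}+\big(D_wr_W(w)h\big)\scalar{E'(\psi)}{\phi}_{L^2},
\]
and differentiating the explicit formula for $r_W$ yields $D_wr_W(w)h=-\scalar{w}{h}_{L^2}/\big((1+r_W(w))\norm{\phi}_{L^2}^2\big)$. Since $\dual{\Pi_WE'(\psi)}{h}=\scalar{E'(\psi)}{h}_{L^2}$ for $h\in W$, this identifies $D_w\EnergyNorm(w)=\Pi_WE'(\psi)-\frac{\scalar{E'(\psi)}{\phi}_{L^2}}{(1+r_W(w))\norm{\phi}_{L^2}^2}w$ in $W'$. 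Plugging this into the target identity~\eqref{eq:projected-CNGF} and comparing with the displayed expression for $\partial_tw$, everything reduces to the scalar relation $\scalar{E'(\psi)}{\psi}_{L^2}=\frac{\scalar{E'(\psi)}{\phi}_{L^2}}{1+r_W(w)}+\dual{D_w\EnergyNorm(w)}{w}$; expanding $\psi=(1+r_W(w))\phi+w$, the $\scalar{E'(\psi)}{w}_{L^2}$ terms cancel and, after factoring $\scalar{E'(\psi)}{\phi}_{L^2}$, what remains is precisely the constraint $(1+r_W(w))^2=1-\norm{w}_{L^2}^2/\norm{\phi}_{L^2}^2$.

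The hard part will be the bookkeeping of the extra terms produced by $D_wr_W$: the chart $w\mapsto\chi(r_W(w),w)$ is not isometric onto $\mathcal S_\phi$, so $-D_w\EnergyNorm(w)$ alone is not the projected flow and the correction $\frac{\dual{D_w\EnergyNorm(w)}{w}}{\norm{\phi}_{L^2}^2}w$ is genuinely needed. The reason the computation closes is structural: the discrepancy between $D_w\EnergyNorm(w)$ and $\Pi_WE'(\psi)$ is a multiple of $w$ governed entirely by the normalization, so the verification collapses to the defining relation of $r_W$. I would also be careful to record the regularity $E'(\psi)\in L^2$ (from $\psi\in D(H)$) so that all the pairings above are legitimate $L^2$ scalar products.
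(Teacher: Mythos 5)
Your proposal is correct and follows essentially the same route as the paper: your identity $D_w\EnergyNorm(w)=\Pi_W E'(\psi)-\frac{\scalar{E'(\psi)}{\phi}_{L^2}}{(1+r_W(w))\norm{\phi}_{L^2}^2}w$ is exactly the paper's~\eqref{eq:10} (with $D_w\EnergyOD$ and $D_r\EnergyOD$ written as $\Pi_W E'(\psi)$ and $\scalar{E'(\psi)}{\phi}_{L^2}$), and both arguments close by reducing the leftover terms to the constraint $(1+r_W(w))^2=1-\norm{w}_{L^2}^2/\norm{\phi}_{L^2}^2$. Your presentation is a slightly more streamlined bookkeeping of the same computation, so no further comparison is needed.
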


\begin{proof}
 Observe first that $r_W$ and $\EnergyNorm $ are differentiable on $\mathcal O_W$. Their differentials are given, for $w\in   \mathcal O_W$ and $h\in W$
 such that $w+h\in   \mathcal O_W$, by
  \begin{align*}
    \dual{D_w r_W(w)}{h}&=-\frac{\scalar{w}{h}_{L^2}}{\norm{\phi}_{L^2}^2}\left( 1-\left(\frac{\norm{w}_{L^2}}{\norm{\phi}_{L^2}}\right)^2 \right)^{-\frac12}=-\frac{\scalar{w}{h}_{L^2}}{\norm{\phi}_{L^2}^2}\frac{1}{1+r_W(w)},
\end{align*}
and
\begin{align*}
\dual{D_w\EnergyNorm (w)}{h}&=\dual{E'(\chi(r_W(w),w))}{h}+\dual{D_w r_W(w)}{h}\dual{E'(\chi(r_W(w),w))}{\phi}.
  \end{align*}
Using the derivatives of $\EnergyOD $ (given in~\eqref{eq:der-tilde-1}-\eqref{eq:der-tilde-2}) and $r$,  we might express $D_w\EnergyNorm (w)$ in the following way:
  \begin{equation}
    \label{eq:10}
D_w\EnergyNorm (w)=(D_w\EnergyOD )(r_W(w),w)-\frac{(D_r\EnergyOD )(r_W(w),w) }{\norm{\phi}_{L^2}^2}\frac{1}{1+r_W(w)}w.
\end{equation}
Recall that $\psi\in\mathcal C([0,T],H^1_D(\mathcal G))\cap\mathcal C^1((0,T),L^2(\mathcal G))$ is a solution of the continuous normalized gradient flow~\eqref{eq:CNGF} such that $\norm{\psi}_{L^2}=\norm{\phi}_{L^2}$ and that $\psi$ is decomposed using the data-to-coordinates map
$\chi^{-1}(\psi(t))=(r(t),w(t))\in \R\times W$ given  by~\eqref{eq: data-to-coordinates} in the following way:
\[
\psi(t)=(1+r(t))\phi+w(t).
\]
Since $r(t)=\scalar{\psi(t)}{\phi}_{L^2}-1= r_W(w(t))$,
the function $r$ of $t$  is $\mathcal C^1$. The regularity of $w$ in
$t$ is the same as the regularity of $\psi$. 
We have
\[
\norm{\psi(t)}_{L^2}^2=(1+r(t))^2\norm{\phi}_{L^2}^2+\norm{w(t)}_{L^2}^2,
\]
which, by conservation of the $L^2$-norm for $\psi$ implies that for all $t\in[0,T]$ we have
\[
-1\leq r(t)\leq0\quad \text{and}\quad \norm{w(t)}_{L^2}^2\leq \norm{\psi}_{L^2}^2.
\]
We want to convert the continuous normalized gradient flow~\eqref{eq:CNGF} in $\psi$ into a closed equation for $w$ ($r$ can be directly deduced from $w$ by preservation of the $L^2$ norm). Observe first that
\[
\partial_t\psi=\phi\partial_tr+\partial_tw.
\]
To obtain the evolution equation for $w$, we take $h\in W$ and compute:
\[
\dual{\partial_tw}{h}=\dual{\partial_t\psi-\phi\partial_tr}{h}=\dual{\partial_t\psi}{h}.
\]
Since $\psi$ is a solution of the normalized gradient flow~\eqref{eq:CNGF}, we get
\[
\dual{\partial_t\psi}{h}=\dual{-E'(\psi)}{h}+\frac{\dual{E'(\psi)}{\psi}}{\norm{\psi}_{L^2}^2}\dual{\psi}{h}.
\]
Since $h\in W$, we have
\[
\dual{-E'(\psi)}{h}=-D_w\EnergyOD (r,w)h.
\]
We also have
\begin{align*}
  \dual{E'(\psi)}{\psi}&= \displaystyle (1+r) \dual{E'(\psi)}{\phi}+\dual{E'(\psi)}{w} \\
                       &=\displaystyle (1+r)D_r\EnergyOD (r,w)+\dual{D_w \EnergyOD (r,w)}{w}.    
\end{align*}
Using $\dual{\psi}{h}=\dual{w}{h}$, we get the following equation:
{\color{black}\begin{align*}
    \dual{\partial_tw}{h}=&-D_w\EnergyOD(r,w)h+\\
    & \displaystyle \frac{1}{\norm{\phi}_{L^2}^2}\Big((1+r)D_r\EnergyOD(r,w)+\dual{D_w \EnergyOD(r,w)}{w}
    \Big)\dual{w}{h}.  
\end{align*}
}%
{\color{black}Since the previous equation holds for any $h\in W$, it can be rewritten as}
  \[
\partial_tw=-D_w\EnergyOD(r,w)+\frac{1}{\norm{\phi}_{L^2}^2}\Big((1+r)D_r\EnergyOD(r,w)+\dual{D_w \EnergyOD(r,w)}{w}
    \Big)w{\color{black},\quad \textrm{in }W'}.
    \]
    By conservation of the $L^2$-norm in the normalized gradient flow~\eqref{eq:CNGF}, $r$ might be inferred from $w$ and we have  for $w$ the following closed equation
    \begin{multline}
        \label{eq:9}
\partial_tw=-(D_w\EnergyOD)(r_W(w),w)\\+\frac{1}{\norm{\phi}_{L^2}^2}\Big((1+r_W(w))(D_r\EnergyOD)(r_W(w),w)+\dual{(D_w \EnergyOD)(r_W(w),w)}{w}
    \Big)w.  
  \end{multline}
Using~\eqref{eq:10} to replace $(D_w\EnergyOD)(r_W(w),w)$ in~\eqref{eq:9}, we obtain
{\color{black}
\begin{equation*}
\begin{array}{rcl}
  \partial_tw&=&\displaystyle -D_w\EnergyNorm (w)-\frac{(D_r\EnergyOD)(r_W(w),w) }{\norm{\phi}_{L^2}^2}\frac{1}{1+r_W(w)}w\\
              && \displaystyle +\frac{1}{\norm{\phi}_{L^2}^2}(1+r_W(w))(D_r\EnergyOD)(r_W(w),w)w\\
              && \displaystyle +\frac{1}{\norm{\phi}_{L^2}^2}
                 \left\langle D_w\EnergyNorm (w) + \frac{(D_r\EnergyOD)(r_W(w),w)}{\norm{\phi}_{L^2}^2}\frac{1}{1+r_W(w)}w,w\right\rangle \, w\\
             &=&\displaystyle -D_w\EnergyNorm (w) + \frac{1}{\norm{\phi}_{L^2}^2} \left\langle D_w\EnergyNorm (w),w\right\rangle \, w\\
             &&\displaystyle + \frac{(D_r\EnergyOD)(r_W(w),w)}{\norm{\phi}_{L^2}^2} \left[
                -\frac{1}{1+r_W(w)}w+(1+r_W(w))w\right .\\
  &&\displaystyle \left. \qquad \qquad\qquad\qquad \qquad\qquad\qquad \qquad\qquad +\frac{1}{1+r_W(w)} \frac{\|w\|_{L^2}^2}{\|\phi\|_{L^2}^2}w
                \right]\\
  &=&\displaystyle -D_w\EnergyNorm (w)+  \frac{\dual{D_w\EnergyNorm (w)}{w}}{\|\phi\|_{L^2}^2}w,  
\end{array}
\end{equation*}
}%
where to get the last line we have used the expression of $\norm{w}_{L^2}^2$ in terms of $r_W(w)$, i.e.
\[
\norm{w}_{L^2}^2=\norm{\phi}_{L^2}^2\left(1-(1+r_W(w))^2\right).
\]
This concludes the proof. 
\end{proof}

\subsubsection{Step 3: Link with the linearized action}

To conclude the proof of Proposition~\ref{prop:CNGF-decomposition}, it remains to make the link between $ D_w\EnergyNorm (w)$ and $L_+$.

\begin{lemma}
  \label{lem:L_+}
  The differential $ D_w\EnergyNorm (w)$ can be approximated in the following way:
  \[
    D_w\EnergyNorm (w)=L_+w+o(w),
  \]
  where $L_+$ has been defined in~\eqref{eq:L_+}.
\end{lemma}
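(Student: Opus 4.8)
The plan is to combine the two previous lemmas. From Lemma~\ref{lem:orth-dec-energy} I already have the two partial derivatives of $\EnergyOD$ at a general point $(r,w)$, and from equation~\eqref{eq:10} in the proof of Lemma~\ref{lem:gradient-flow-local} I have the exact identity
\[
D_w\EnergyNorm(w)=(D_w\EnergyOD)(r_W(w),w)-\frac{(D_r\EnergyOD)(r_W(w),w)}{\norm{\phi}_{L^2}^2}\,\frac{1}{1+r_W(w)}\,w.
\]
So the whole task reduces to substituting $r=r_W(w)$ into the estimates of Lemma~\ref{lem:orth-dec-energy} and tracking the error terms. The key structural fact I would exploit first is the size of $r_W(w)$: the estimate~\eqref{eq:estim_rW} gives $\abs{r_W(w)}\leq\big(\norm{w}_{L^2}/\norm{\phi}_{L^2}\big)^2$, so $r_W(w)=O(\norm{w}_{L^2}^2)=o(\norm{w}_{L^2})$ and in particular $O(r_W(w))$ can be absorbed into $o(w)$. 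This immediately simplifies the first term: Lemma~\ref{lem:orth-dec-energy} gives $(D_w\EnergyOD)(r_W(w),w)=Hw-f'(\phi)w+O(r_W(w))+o(w)=Hw-f'(\phi)w+o(w)$.

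Next I would handle the second term. From Lemma~\ref{lem:orth-dec-energy},
\[
(D_r\EnergyOD)(r_W(w),w)=-\omega\norm{\phi}_{L^2}^2-\scalar{w}{f(\phi)-f'(\phi)\phi}_{L^2}+O(r_W(w))+o(\norm{w}_{L^2}),
\]
and the leading constant here is $-\omega\norm{\phi}_{L^2}^2$, while the remaining pieces are $O(\norm{w}_{L^2})$. Since this scalar multiplies the factor $\tfrac{1}{\norm{\phi}_{L^2}^2}\tfrac{1}{1+r_W(w)}\,w$, and $\tfrac{1}{1+r_W(w)}=1+O(\norm{w}_{L^2}^2)$, the whole second term is
\[
\Big(-\frac{-\omega\norm{\phi}_{L^2}^2}{\norm{\phi}_{L^2}^2}\Big)w+o(w)=\omega w+o(w),
\]
because multiplying the $O(\norm{w}_{L^2})$ remainder by $w$ produces an $O(\norm{w}_{L^2}^2)=o(w)$ contribution, and likewise $r_W(w)\,w=o(w)$. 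Collecting the two terms yields
\[
D_w\EnergyNorm(w)=Hw-f'(\phi)w+\omega w+o(w)=L_+w+o(w),
\]
which is exactly the claim, since $L_+u=Hu+\omega u-f'(\phi)u$ by~\eqref{eq:L_+}.

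The main obstacle is not any single algebraic step but making the little-$o$ bookkeeping rigorous in the correct function-space topology. The statement asserts $D_w\EnergyNorm(w)=L_+w+o(w)$ as elements of $W'$ (the dual of $W$), so I must be careful that each ``$o(w)$'' means an operator on $W$ whose $W'$-norm is $o(\norm{w}_{H^1})$, and verify that the Taylor remainders inherited from $f\in\mathcal C^1$ (used in Lemma~\ref{lem:orth-dec-energy}) are genuinely of this order after pairing against an arbitrary test function $h\in W$. In particular the subtle point is that multiplying a scalar of size $O(\norm{w}_{L^2})$ by the vector $w$ gives an $H^1$-error of order $\norm{w}_{L^2}\norm{w}_{H^1}$, which is indeed $o(\norm{w}_{H^1})$ as $w\to0$; I would state this explicitly rather than leave it implicit. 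Apart from this care with norms and dual pairings, the proof is a direct substitution and the result follows.
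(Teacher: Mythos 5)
Your argument is correct and follows essentially the same route as the paper's proof: it starts from the identity~\eqref{eq:10}, substitutes the estimates of Lemma~\ref{lem:orth-dec-energy}, uses the bound~\eqref{eq:estim_rW} to absorb the $O(r_W(w))$ terms, and expands $\tfrac{1}{1+r_W(w)}$ to extract the $\omega w$ contribution. Your extra remarks on the $W'$-topology of the $o(w)$ terms are a welcome clarification but do not change the substance of the argument.
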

  \begin{proof}
      We already have obtained  in~\eqref{eq:10} the identity
  \[
D_w\EnergyNorm (w)=(D_w\EnergyOD)({\color{black}r_W}(w),w)-\frac{(D_r\EnergyOD)({\color{black}r_W}(w),w) }{\norm{\phi}_{L^2}^2}\frac{1}{1+{\color{black}r_W}(w)}w.
\]
From the estimates on  $(D_w\EnergyOD)({\color{black}r_W}(w),w) $ and $(D_r\EnergyOD)({\color{black}r_W}(w),w)$ given in Lemma~\ref{lem:orth-dec-energy}, we have
\[
  -\frac{(D_r\EnergyOD)({\color{black}r_W}(w),w) }{\norm{\phi}_{L^2}^2}\frac{1}{1+{\color{black}r_W}(w)}w=
\omega w
+O(r)+{\color{black}o(\|w\|_{L^2})},
\]
where we have used the classical power series expansion
  \(
\frac{1}{1+r}=1-r+\cdots
\).
Finally, we obtain
    \begin{equation*}
D_w\EnergyNorm (w)=Hw +\omega w-f'(\phi)w+O(r)+o(w)=L_++o(w)
\end{equation*}
(since {\color{black}from~\eqref{eq:estim_rW}}, we have $r=O(\norm{w}_{L^2}^2)$). This concludes the proof.
\end{proof}

Proposition~\ref{prop:CNGF-decomposition} is a direct consequence of Lemmas~\ref{lem:orth-dec-energy},~\ref{lem:gradient-flow-local}, \ref{lem:L_+} and bound~\eqref{eq:estim_rW}.

\subsection{Convergence of the  normal part of the continuous normalized gradient flow}

 The second step of the proof of Theorem~\ref{thm:convergence} is to prove convergence to $0$ of the projected part $w$ of the solution $\psi$ of the continuous normalized gradient flow~\eqref{eq:CNGF}, provided $\norm{w_0}_{H^1}$ is small (i.e. $\psi_0$ is close enough to the bound state $\phi$). This is the object of the following proposition. 

  \begin{proposition}[Convergence of the flow]
    \label{prop:convergence}
  For every  $0<\mu<\kappa$ (where $\kappa$ is the coercivity constant of Assumption~\ref{ass:coercivity}) there exist $\eps>0$ and $C>0$ (independent of $\varepsilon$) such that for any $w_0\in W$ verifying $\norm{w_0}_{H^1}<\eps$ the associated solution $w$ of~\eqref{eq:projected-CNGF} is global and for all $t\in[0,\infty)$ verifies
    \[
      \norm{w(t)}_{H^1}\leq C \norm{w_0}_{H^1} 
      e^{-\mu t}.
    \]
  \end{proposition}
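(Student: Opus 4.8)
The plan is to establish convergence of $w$ by combining the coercivity of $L_+$ (Assumption~\ref{ass:coercivity}) with the approximate evolution equation $\partial_t w = -L_+ w + o(w)$ coming from Proposition~\ref{prop:CNGF-decomposition}. The key idea is to run a Lyapunov-type argument on $\norm{w(t)}_{H^1}^2$ (or equivalently on the quadratic form $\scalar{L_+ w}{w}_{L^2}$), showing that as long as $w$ stays small, its $H^1$-norm decays at an exponential rate governed by $\kappa$. The smallness of $\norm{w_0}_{H^1}$ is what makes the lower-order error terms $o(w)$ and the $r = O(\norm{w}_{L^2}^2)$ correction negligible compared to the coercive part, so that a continuation/bootstrap argument keeps $w$ trapped in a small ball where the estimates remain valid, giving global existence simultaneously with decay.

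\textbf{Key steps.} First I would fix $\mu$ with $0 < \mu < \kappa$ and set $\delta = \kappa - \mu > 0$. Local existence of $w$ on some interval follows from Proposition~\ref{prop:lwp} applied to $\psi$ and the smoothness of $\chi^{-1}$, so the issue is the a priori decay estimate. I would differentiate $\norm{w}_{L^2}^2$ in time using~\eqref{eq:projected-CNGF} and test against $w$: since $\dual{w}{w} = \norm{w}_{L^2}^2$ and the projection term contributes $\frac{\dual{D_w\EnergyNorm(w)}{w}}{\norm{\phi}_{L^2}^2}\norm{w}_{L^2}^2$, the dominant balance is
\[
\tfrac12 \partial_t \norm{w}_{L^2}^2 = -\dual{D_w\EnergyNorm(w)}{w}\left(1 - \tfrac{\norm{w}_{L^2}^2}{\norm{\phi}_{L^2}^2}\right).
\]
Substituting $D_w\EnergyNorm(w) = L_+ w + o(w)$ from Lemma~\ref{lem:L_+} and using coercivity $\scalar{L_+ w}{w}_{L^2} \geq \kappa \norm{w}_{H^1}^2$ (valid because $w \in W$ is $L^2$-orthogonal to $\phi$), the leading term is bounded above by $-\kappa\norm{w}_{H^1}^2$, with the error terms being $o(\norm{w}_{H^1}^2)$. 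The factor $(1 - \norm{w}_{L^2}^2/\norm{\phi}_{L^2}^2)$ is close to $1$ when $w$ is small. This yields, for $\norm{w}_{H^1}$ below some threshold $\eps$,
\[
\partial_t \norm{w}_{L^2}^2 \leq -2\mu \norm{w}_{H^1}^2 \leq -2\mu \norm{w}_{L^2}^2,
\]
whence Gronwall gives $L^2$-decay $\norm{w(t)}_{L^2} \leq \norm{w_0}_{L^2} e^{-\mu t}$.

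\textbf{From $L^2$-decay to $H^1$-decay and globalization.} The $L^2$ estimate alone is not the conclusion, since the theorem demands decay in $H^1$; here I would upgrade using parabolic smoothing for the semilinear equation $\partial_t w = -L_+ w + o(w)$, treating $-L_+$ as the generator of an analytic semigroup and using that the inequality $\partial_t\norm{w}_{L^2}^2 \le -2\mu\norm{w}_{H^1}^2$ already controls the time-integral of $\norm{w}_{H^1}^2$. Alternatively, and more cleanly, I would work directly with the energy-type quantity $\scalar{L_+ w}{w}_{L^2}$, which is equivalent to $\norm{w}_{H^1}^2$ by coercivity and the boundedness of $L_+$ on $H^1$, and show it decays at rate $2\mu$; this gives the $H^1$-bound with a constant $C$ coming from the norm-equivalence constants. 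The globalization is then a standard continuity/continuation argument: choose $\eps$ small enough that whenever $\norm{w_0}_{H^1} < \eps$ the decay estimate forces $\norm{w(t)}_{H^1}$ to stay below the threshold for all $t$ in the maximal existence interval, so the solution cannot blow up or leave the coordinate chart $\mathcal O_W$, and hence $T^{max} = \infty$.

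\textbf{Main obstacle.} The delicate point is passing from the $L^2$-type differential inequality to genuine $H^1$-decay with a clean constant, since the coercivity gives a lower bound $\kappa\norm{w}_{H^1}^2$ while the time-derivative is naturally an $L^2$ quantity; reconciling these (via the analytic semigroup / norm equivalence on the range of the projection, or a direct estimate on $\scalar{L_+w}{w}_{L^2}$) and verifying uniformly that the $o(w)$ error terms are genuinely dominated by the coercive gap $\kappa - \mu$ on the whole small ball is where the real care is needed. One must also confirm that the solution $w$ remains in $\mathcal O_W$ so that $r_W(w)$ and $\EnergyNorm(w)$ stay well-defined throughout, which the smallness propagation handles but must be checked in tandem with the decay.
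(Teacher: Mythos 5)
Your plan has the right architecture and you correctly locate the difficulty, but the argument as written has a genuine gap at exactly the point you flag: the passage from the $L^2$ differential inequality to pointwise-in-time $H^1$ decay is never actually carried out. Your computation of $\partial_t\norm{w}_{L^2}^2$ is correct and yields $\norm{w(t)}_{L^2}\le\norm{w_0}_{L^2}e^{-\mu t}$ together with an integrated bound on $\int_0^t\norm{w(s)}_{H^1}^2\,ds$, but neither of these is the claimed estimate. Of the two fixes you sketch, the second is the one the paper uses, and the assertion ``show that $\scalar{L_+w}{w}_{L^2}$ decays at rate $2\mu$'' is precisely the nontrivial step that must be supplied. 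Differentiating this quantity along the flow~\eqref{eq:projected-CNGF} gives
\[
\frac{d}{dt}\dual{L_+w}{w}=2\dual{L_+w}{\partial_t w}=-2\norm{L_+w}_{L^2}^2+2\dual{R(w)}{w},\qquad R(w)=o(w),
\]
and the dissipation term $-2\norm{L_+w}_{L^2}^2$ is a priori an $L^2$ quantity, not obviously comparable to $\dual{L_+w}{w}$ or to $\norm{w}_{H^1}^2$; norm equivalence of the Lyapunov functional with $\norm{w}_{H^1}^2$ does not by itself produce a differential inequality for it. The missing ingredient is the elementary but decisive observation that coercivity plus Cauchy--Schwarz give
\[
\kappa\norm{w}_{H^1}^2\le\dual{L_+w}{w}\le\norm{L_+w}_{L^2}\norm{w}_{L^2}\le\norm{L_+w}_{L^2}\norm{w}_{H^1},
\]
hence $\norm{L_+w}_{L^2}\ge\kappa\norm{w}_{H^1}$. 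This converts the dissipation into $-2\kappa^2\norm{w}_{H^1}^2$, which absorbs the $o(\norm{w}^2)$ error once $\norm{w}_{H^1}<\eps$ and yields $\frac{d}{dt}\dual{L_+w}{w}\le-2\kappa\mu\norm{w}_{H^1}^2$; integrating, using $\dual{L_+w}{w}\ge\kappa\norm{w}_{H^1}^2$ once more and Gronwall gives $\norm{w(t)}_{H^1}^2\le\bigl(\dual{L_+w_0}{w_0}/\kappa\bigr)e^{-2\mu t}$, and the constant $C$ comes from the upper bound $\dual{L_+w_0}{w_0}\le C\kappa\norm{w_0}_{H^1}^2$. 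Without this chain your argument stalls at $L^2$ decay.

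Two smaller points. First, your alternative route via analytic-semigroup smoothing could in principle be made to work, but it is considerably heavier (one must propagate the smoothing estimates uniformly in time and treat the initial layer separately to obtain the stated constant), and it is not needed. Second, $L_+$ is a second-order operator and is not bounded on $H^1$; what is true, and what the comparison $\dual{L_+w}{w}\simeq\norm{w}_{H^1}^2$ actually requires, is that the quadratic form $w\mapsto\dual{L_+w}{w}$ is bounded by $C\norm{w}_{H^1}^2$, via~\eqref{eq:Q} and the Sobolev embedding. Your continuation/trapping discussion for globality and for keeping $w$ inside $\mathcal O_W$ is fine and matches the paper's argument.
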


Since $\abs{r(w)}\leq C \norm{w}_{L^2}^2$ ({\color{black}see}~\eqref{eq:estim_rW}), 
  Theorem~\ref{thm:convergence} is a direct consequence of Propositions~\ref{prop:CNGF-decomposition}
and~\ref{prop:convergence}.

\begin{proof}[Proof of Proposition~\ref{prop:convergence}]
Denote
\begin{equation}
  \label{eq:13}
  R(w)=D_w\EnergyNorm (w)-\frac{\dual{D_w\EnergyNorm(w)}{w}}{\color{black}\|\phi\|_{L^2}^2}w-L_+w=o(w).
\end{equation}
We remark that, for any $t\in(0,T)$,
\[
\scalar{\phi}{w(t)}_{L^2} = 0\quad\Rightarrow\quad \scalar{\phi}{\partial_t w(t)}_{L^2} = 0.
\]
Thus, {\color{black} by denoting $P_W$ the orthogonal projector on $W$,} since $L_+$ is self-adjoint and ${\color{black}\partial_t w \in W}$ verifies~\eqref{eq:projected-CNGF}, we have
      \begin{align}
\frac{d}{d t}\dual{L_+w(t)}{w(t)} &{\color{black}= 2 \dual{L_+w(t)}{P_W\partial_t w(t)} = 2 \dual{P_W L_+w(t)}{\partial_t w(t)}}\nonumber
\\ &=-2\dual{L_+w(t)}{L_+w(t)}+2\dual{R(w(t))}{w(t)},\label{eq:14}
\end{align}
where $R$ is given by~\eqref{eq:13}.  By the coercivity estimate in Assumption~\ref{ass:coercivity} and Cauchy-Schwartz inequality, we have
\[
\kappa\norm{w}_{H^1}^2\leq \dual{L_+w}{w}\leq \norm{L_+w}_{L^2}\norm{w}_{L^2},
\]
which implies in particular that
\[
\kappa\norm{w}_{H^1}\leq  \norm{L_+w}_{L^2}.
  \]
  Coming back to~\eqref{eq:14}, we get
  \begin{align}
    \frac{d}{d t}\dual{L_+w(t)}{w(t)}&\leq -2\kappa^2\norm{w(t)}_{L^2}^2+2 \norm{R(w(t))}_{L^2}\norm{w(t)}_{L^2} \nonumber\\
    &\leq -2\kappa^2\norm{w(t)}_{L^2}^2+ o(\norm{w(t)}_{L^2}^2).     \label{eq:1}
\end{align}
Assume that $\norm{w_0}_{H^1}<\eps$ where $\eps>0$ is  chosen such that
\[
-2\kappa^2\norm{w_0}_{L^2}^2+ o(\norm{w_0}_{L^2}^2)<-2\kappa\mu\norm{w_0}_{L^2}^2,
\]
(recall that $0<\mu<\kappa)$).
Since $w$ is continuous, there exists $T_0>0$ such that for any $t\in[0,T_0]$, we have
  \begin{equation}
    \label{eq:2}
-2\kappa^2\norm{w(t)}_{L^2}^2+ o(\norm{w(t)}_{L^2}^2)<-2\kappa\mu\norm{w(t)}_{L^2}^2.
\end{equation}
For $t\in[0,T_0]$ we integrate~\eqref{eq:1} in time from $0$ to $t$  and use~\eqref{eq:2} to obtain
\[
\dual{L_+w(t)}{w(t)}-\dual{L_+w_0}{w_0}\leq -2\kappa\mu\int_0^t\norm{w(s)}_{H^1}^2ds.
\]
Defining the constant $C_0=\dual{L_+w_0}{w_0}/\kappa$ and using again the coercivity estimate of Assumption~\ref{ass:coercivity}, we get
\[
  \norm{w(t)}_{H^1}^2\leq C_0-2\mu\int_0^t\norm{w(s)}_{H^1}^2ds,
\]
which by Gronwall inequality gives
\[
\norm{w(t)}_{H^1}^2\leq C_0e^{-2\mu t}
\]
and therefore
\[
\norm{w(t)}_{H^1}\leq \sqrt{C_0}e^{-\mu t}.
\]
Note that there exists a constant $C>0$ independent of $w_0$ such that
\[
   C_0 = \dual{L_+w_0}{w_0}/\kappa \leq C \norm{w_0}_{H^1}^2,
\]
thanks to \eqref{eq:Q} and Sobolev embeddings.
This concludes the proof. 
  \end{proof}

\section{Space-time discretization of the normalized gradient flow}
\label{sec:discretization}

\subsection{Time discretization}
The discretization scheme of the continuous normalized gradient flow~\eqref{eq:CNGF} must provide a numerical method to obtain a minimizer of~\eqref{eq:minimization}. We first consider the semi-discretization in time. The
time step $\delta t >0$ is chosen to be fixed and the discrete times $t_n$ are
defined as $t_n=n \delta t$, $n\geq 0$. The semi-discrete approximation of any
unknown function $\psi(\cdot,x)$, $x \in \mathcal{G}$, at time $t_n$ is denoted by
$\psi^n(x)$. In order to present the numerical schemes, we recall that the nonlinearity verifies Assumption~\ref{ass:nonlinearity} and that it is of the form
\begin{equation*}
f(\psi) = g(|\psi|^2)\psi,
\end{equation*}
where $g$ is continuous.
We also introduce the variable $\mu_m$, usually referred to as the chemical potential,
\begin{equation*}
  \mu_{m}(\psi) = \frac{1}{m}\left \langle
    E'(\psi),\psi \right \rangle,\qquad m=\|\psi\|_{L^2}^2.
\end{equation*}
Dropping the dependence on $(t,x)\in[0,+\infty)\times\mathcal{G}$ and using~\eqref{eq:derivE}, the continuous normalized gradient flow~\eqref{eq:CNGF} can therefore be rewritten as
  \begin{equation}
  \label{eq:CNGF2}
      \partial_t\psi=-(H - g(|\psi|^2))\psi+\mu_m(\psi) \psi,\quad
      \psi(t=0)=\psi_0,
    \end{equation}
 where $\|\psi_0\|_{L^2}^2=m$.

Several numerical methods can be considered for discretizing~\eqref{eq:CNGF2}. For
example, if the nonlinearity is $f(\psi)=|\psi|^2\psi$, a standard Crank-Nicolson scheme
would consist in
\[
  \frac{\psi^{n+1}-\psi^{n}}{\delta t} = \left (-H + 
  \frac{|\psi^{n+1}|^2 + |\psi^n|^2}{2} + \mu_{m}^{n+\frac12}\right )  \psi^{n+\frac12},
\]
where the intermediate values at $t_{n+\frac12}$ are given by
  \begin{gather*}
    \psi^{n+\frac12}=\frac{\psi^{n+1}+\psi^{n}}{2},
\quad
    \mu_{m}^{n+\frac12} = \frac{D^{n+\frac12}}{\|\psi^{n+\frac12}\|_{L^2}^2},
\\
D^{n+\frac12} = 2 Q (\psi^{n+\frac12}) - \frac12 \left( \norm{ |\psi^{n+1}| \, \psi^{n+\frac12}}_{L^2}^2 +  \| |\psi^{n}|\, \psi^{n+\frac12} \|_{L^2}^2\right).
  \end{gather*}
This method can be proved to be energy diminishing.
However, in the above discretization, we need to solve a fully
nonlinear system at every time step, which is  time- and resource-consuming in practical
computation. 

Bao and Du introduced in~\cite{BaDu04} a more efficient solution: the
Gradient Flow with Discrete Normalization (GFDN) method, which
consists into one step of classical gradient flow followed by a
mass normalization step. By setting $\psi^0=\psi_0$, it is given by
\begin{equation}
  \label{eq:GFDN}
  \left \{
    \begin{aligned}
      \displaystyle \frac{\varphi^{n+1}-\psi^n}{\delta t}& = -H
      \varphi^{n+1}+g(|\psi^n|^2)\varphi^{n+1},
      \\
      \displaystyle
      \psi^{n+1}&=\sqrt{m}\frac{\varphi^{n+1}}{\|\varphi^{n+1}\|_{L^2}}.
    \end{aligned}
  \right .    
\end{equation}
It is not clear at first sight that~\eqref{eq:GFDN} is indeed a
discretization of~\eqref{eq:CNGF2}, but we have the following
result. 

\begin{proposition}
  \label{prop:discretization}
The GFDN method~\eqref{eq:GFDN} is a time-discretization of the continuous normalized gradient flow~\eqref{eq:CNGF2}.
\end{proposition}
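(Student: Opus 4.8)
The goal is to show that the GFDN scheme \eqref{eq:GFDN} arises as a natural time-discretization of \eqref{eq:CNGF2}. The plan is to start from the first line of \eqref{eq:GFDN}, namely the semi-implicit Euler step defining $\varphi^{n+1}$ from $\psi^n$, and to show that after the normalization step $\psi^{n+1}=\sqrt{m}\,\varphi^{n+1}/\norm{\varphi^{n+1}}_{L^2}$, the increment $(\psi^{n+1}-\psi^n)/\delta t$ matches, up to higher order terms in $\delta t$, the right-hand side of \eqref{eq:CNGF2} evaluated appropriately. The key observation is that the normalization step is precisely the discrete analogue of the projection onto the $L^2$-sphere that distinguishes \eqref{eq:CNGF2} from the plain gradient flow.

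First I would rewrite the gradient-flow step as $\varphi^{n+1}=\psi^n+\delta t\bigl(-H\varphi^{n+1}+g(|\psi^n|^2)\varphi^{n+1}\bigr)$, so that $\varphi^{n+1}=\psi^n+O(\delta t)$, and in particular $\norm{\varphi^{n+1}}_{L^2}^2=m+O(\delta t)$ since $\norm{\psi^n}_{L^2}^2=m$. The plan is then to expand the normalization factor: writing $\norm{\varphi^{n+1}}_{L^2}^2=m+\delta t\, s^n+o(\delta t)$ for an explicit scalar $s^n$ obtained by differentiating $\norm{\varphi^{n+1}}_{L^2}^2$, one gets $\sqrt{m}/\norm{\varphi^{n+1}}_{L^2}=1-\tfrac{\delta t}{2m}s^n+o(\delta t)$. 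Multiplying by $\varphi^{n+1}=\psi^n+\delta t\,\partial_t\psi^n+o(\delta t)$ and keeping the $O(\delta t)$ terms should reconstruct the $\mu_m(\psi)\psi$ correction term of \eqref{eq:CNGF2} from the normalization, while the $-H\varphi^{n+1}+g(|\psi^n|^2)\varphi^{n+1}$ part reproduces $-(H-g(|\psi|^2))\psi$.

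Concretely, I would compute $s^n$ by taking the $L^2$-scalar product of the gradient-flow step with $\varphi^{n+1}$ (or with $\psi^n$), which yields $\norm{\varphi^{n+1}}_{L^2}^2=\norm{\psi^n}_{L^2}^2+2\delta t\scalar{\partial_t\psi^n}{\psi^n}_{L^2}+o(\delta t)$ with $\partial_t\psi^n$ standing for $-H\varphi^{n+1}+g(|\psi^n|^2)\varphi^{n+1}$; recognizing $\scalar{-H\psi^n+g(|\psi^n|^2)\psi^n}{\psi^n}_{L^2}=-\dual{E'(\psi^n)}{\psi^n}=-m\,\mu_m(\psi^n)$ then identifies the normalization correction exactly with $+\mu_m(\psi^n)\psi^n\,\delta t$. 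Assembling these expansions, the difference quotient $(\psi^{n+1}-\psi^n)/\delta t$ equals $-(H-g(|\psi^n|^2))\psi^n+\mu_m(\psi^n)\psi^n+o(1)$, which is the Euler discretization of \eqref{eq:CNGF2}.

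The main obstacle I expect is making the formal Taylor expansions rigorous: one must justify that $\varphi^{n+1}-\psi^n=O(\delta t)$ in a norm strong enough (here $L^2$, possibly $H^1_D$) to control the nonlinearity $g(|\psi^n|^2)\varphi^{n+1}$ and to expand $\norm{\varphi^{n+1}}_{L^2}$ without losing the order-$\delta t$ coefficient, and that the implicit operator $(\mathrm{Id}+\delta t\,H-\delta t\,g(|\psi^n|^2))$ is invertible with bounds uniform enough to guarantee $\varphi^{n+1}$ is well defined and depends suitably on $\delta t$. Since $H\geq-\lambda$ and $g$ satisfies Assumption~\ref{ass:nonlinearity}, invertibility for small $\delta t$ and the requisite estimates should follow from the same semilinear parabolic framework invoked in the proof of Proposition~\ref{prop:lwp}; I would lean on that regularity to make the consistency computation precise rather than purely formal.
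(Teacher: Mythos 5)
Your argument is correct, but it is organized differently from the paper's. You verify consistency a posteriori: expand $\varphi^{n+1}=\psi^n+O(\delta t)$, expand the normalization factor $\sqrt{m}/\norm{\varphi^{n+1}}_{L^2}=1+\delta t\,\mu_m(\psi^n)+o(\delta t)$ using $\norm{\varphi^{n+1}}_{L^2}^2=m-2\delta t\dual{E'(\psi^n)}{\psi^n}+o(\delta t)$, and check that the difference quotient $(\psi^{n+1}-\psi^n)/\delta t$ reproduces the right-hand side of~\eqref{eq:CNGF2} up to $o(1)$ — a direct local-truncation-error computation on the full scheme. The paper instead \emph{derives} the scheme: it applies a Lie splitting to~\eqref{eq:CNGF2}, separating the gradient-flow substep~\eqref{eq:split_step1} (discretized semi-implicitly) from the chemical-potential ODE~\eqref{eq:split_step2}, and then shows that the normalization $\psi^{n+1}=\sqrt{m}\,\varphi^{n+1}/\norm{\varphi^{n+1}}_{L^2}$ is exactly the solution of an ODE $\partial_t\rho=\nu_{n,m}(\delta t)\rho$ whose piecewise-constant coefficient $\tilde\mu_m(t,\delta t)$ converges to $\mu_m(\psi)$ as $\delta t\to0$. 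The two routes establish the same fact at essentially the same (formal) level of rigor; the sign bookkeeping in your identification $\scalar{-H\psi^n+g(|\psi^n|^2)\psi^n}{\psi^n}_{L^2}=-m\,\mu_m(\psi^n)$ is right and the expansion closes. What the splitting viewpoint buys is structural: it explains the provenance of the scheme within the Bao--Du framework, isolates the normalization as an (exact) integration of the projection term, and makes transparent why the energy-diminishing property of the continuous flow is only approximately inherited (the splitting error), as the paper notes in a remark. Your direct expansion is more elementary and arguably gives a cleaner one-shot consistency statement; your closing caveats about justifying $\varphi^{n+1}-\psi^n=O(\delta t)$ and the invertibility of $\mathrm{Id}+\delta t(H-g(|\psi^n|^2))$ are the right ones, and are no more demanding than the smoothness the paper implicitly assumes when it Taylor-expands $\norm{\varphi(t^*+s)}_{L^2}^2$.
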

Some arguments are given in~\cite{bao2007,BaDu04} for $m=1$, we provide here a proof with additional details and extend the result for any $m>0$.

\begin{proof}[Proof of Proposition~\ref{prop:discretization}]
The starting point is to apply a first order splitting, also known as
Lie splitting, to~\eqref{eq:CNGF2}. Assuming that the approximation
$\psi^n$ of $\psi$ at time $t_n$,  of mass $\|\psi^n\|_{L^2}^2=m$,
is known,  the steps of the splitting scheme are as follows.
\begin{itemize}
\item[Step 1:] Solve
  \begin{equation}
    \left \{  \begin{aligned}
                \partial_tv&= -H v + g(|v|^2)v, & t_n\leq t &\leq t_{n+1},\\
                v(t_n)&=\psi^n.
              \end{aligned}
            \right .\label{eq:split_step1}
          \end{equation}
\item[Step 2:] Solve
          \begin{equation}
            \left \{  \begin{aligned}
                        \partial_tw&= \mu_{m}(w)w, & t_n\leq t \leq t_{n+1},\\
                        w(t_n)&=v(t_{n+1}).
                      \end{aligned}
                    \right .\label{eq:split_step2}
                  \end{equation}
                \end{itemize}
After the two steps,  we simply define $\psi^{n+1}=w(t_{n+1})$. 

Step 1 requires to solve a nonlinear parabolic type partial
differential equation. Following~\cite{BaDu04}, we approximate~\eqref{eq:split_step1} by a semi-implicit time discretization:
\begin{equation}
  \label{eq:split_step1_disc}
  \frac{v^{n+1}-\psi^n}{\delta t} = -H v^{n+1}+g(|\psi^n|^2)v^{n+1}.
\end{equation}
So, we have $\varphi^{n+1}=v^{n+1}$.
The interest of having a semi-implicit scheme stems from its stability property.

The equation involved in Step 2 is an ordinary differential equation. In~\cite{BaDu04}, its solution is approximated by
\begin{equation}
  \label{eq:sol_split_step2}
  w^{n+1}=\sqrt{m}\frac{\varphi^{n+1}}{\|\varphi^{n+1}\|_{L^2}}.
\end{equation}

The coupling of~\eqref{eq:split_step1_disc} and~\eqref{eq:sol_split_step2} leads
to the GFDN method. It is not totally obvious that~\eqref{eq:sol_split_step2} is actually an
approximation of the solution $w(t_{n+1})$ to~\eqref{eq:split_step2}.
The normalization part~\eqref{eq:sol_split_step2} is actually
equivalent to solving the ordinary differential equation
\begin{equation*}
\left \{  \begin{aligned}
    \partial_t\rho&=\nu_{n,m}(\delta t) \rho,& t_n<t<t_{n+1},\\
    \rho(t_n)&=\varphi^{n+1},
          \end{aligned}
          \right .
\end{equation*}
where
\begin{equation*}
  \nu_{n,m}(\delta t)=\frac{\ln(m)-\ln\left(
    \|\varphi^{n+1}\|_{L^2}^2\right)}{2\delta t} .
\end{equation*}
We define the piecewise function
\[
  \tilde{\mu}_{m}(t,\delta t)= \sum_{n = 0}^{+\infty} \nu_{n,m}(\delta t)\mathbf{1}_{[t_n,t_{n+1})}(t).
\]
With this definition, the gradient flow with discrete normalization method~\eqref{eq:GFDN} is an approximation of
\begin{equation}
  \label{eq:gfdnm2}
  \left \{
    \begin{aligned}
      \partial_t\varphi&= -H\varphi + g(|\varphi|^2)\varphi, &\varphi(t_n)&=\psi(t_n), &t_n<t<t_{n+1},\\
      \partial_t\rho&=\tilde{\mu}_{m}(t,\delta t) \rho,& \rho(t_n)&=\varphi(t_{n+1}), & t_n<t<t_{n+1},
    \end{aligned}
  \right .
\end{equation}
with $\|\psi(t_n)\|_{L^2}^2=m$.
Actually, the system~\eqref{eq:gfdnm2} has to be read as the Lie splitting
approximation of 
\begin{equation}
  \label{eq:CNGFM2}
  \partial_t\Upsilon=-H \Upsilon + g(|\Upsilon|^2)\Upsilon + \tilde{\mu}_{m}(t,\delta t) \Upsilon,\quad \Upsilon(t=0)=\psi_0,
\end{equation}
and $\|\psi_0\|^2_{L^2}=m$. Thus, it remains
to make the link between~\eqref{eq:CNGFM2} and~\eqref{eq:CNGF2} by 
determining the limit of $\tilde{\mu}_{m}(t,\delta t)$ when $\delta t$ goes to $0$. Let us define $t^*=t_n$ that remains constant when $\delta t \to 0$ and $n \to \infty$. For $t^*\leq t < t^*+\delta t$, we have
    \begin{align*}
    \tilde{\mu}_{m}(t,\delta t) &= 
    -\frac12  \frac{\mathrm{ln}
    \|\varphi(t^*+\delta t)\|_{L^2}^2-\mathrm{ln}(m)}{\delta t}\\[2mm]
&= 
-\frac{1}{2} \frac{\mathrm{ln}
\|\varphi(t^*+\delta t)\|_{L^2}^2 - \mathrm{ln}
    \|\varphi(t^*)\|_{L^2}^2}{\delta t}.
      \\
&=-\frac12 \frac{\displaystyle \frac{d}{ds}\left(\|\varphi(t^*+s)\|_{L^2}^2\right)_{|s=0 }}{\|\varphi(t^*)\|_{L^2}^2} + {O}(\delta t).      
  \end{align*}
Since $s\mapsto \varphi(t^*+s)$ is solution to $\partial_s\varphi=
-E'(\varphi)$, for $0<s<\delta t$ we have 
  \begin{align*}
    \displaystyle \frac{d}{ds}\|\varphi(t^*+s)\|_{L^2}^2 
    & \displaystyle = -2 \left \langle
      E'(\varphi(t^*+s)),\varphi(t^*+s) \right \rangle.
  \end{align*}
Consequently, for $t^*\leq t <
t^*+\delta t$, we have
  \begin{align*}
\tilde{\mu}_{m}(t,\delta t)&  = 
  \frac{\left \langle
      E'(\varphi(t^*)),\varphi(t^*) \right \rangle}{\|\varphi(t^*)\|_{L^2}^2}+O(\delta t) \\
                           & = \frac{\left \langle
      E'(\psi(t^*)),\psi(t^*) \right \rangle}{m} + O(\delta t)\\ &=\mu_m(\psi)+O(\delta t).
  \end{align*}
Thus, we conclude that~\eqref{eq:CNGFM2} is an approximation of~\eqref{eq:CNGF2}. This finishes the proof. 
\end{proof}

The complete Gradient Flow with Discrete Normalization algorithm is therefore
\begin{equation}
  \label{eq:algoGFDN}
  \begin{array}{l}
    \psi^0=\psi_0, \text{ such that } \|\psi^0\|_{L^2}^2=m,\\
    n=0.\\
    \text{Repeat}\\
    \qquad \left |
    \begin{array}{l}
      \displaystyle \text{Solve } \left(\textrm{Id}+\delta t \left(H-g\left(|\psi^n|^2\right)\right)\right) \varphi^{n+1}=\psi^n,\\
      \displaystyle
      \psi^{n+1}=\sqrt{m}\frac{\varphi^{n+1}}{\|\varphi^{n+1}\|_{L^2}},\\
      n=n+1,
    \end{array} \right .\\
  \text{until } \|\psi^{n+1}-\psi^n\|_{L^2}<\varepsilon.
  \end{array}
\end{equation}
where $\textrm{Id}$ is the identity map and $\varepsilon$ is a tolerance value.

\begin{remark}
  Since we use a splitting scheme to discretize the continuous normalized gradient flow~\eqref{eq:CNGF}, it is no longer guaranteed that~\eqref{eq:algoGFDN} is energy diminishing. We observe in numerical experiments that it is actually almost the case.
\end{remark}


\begin{remark}
  It is possible to modify the algorithm~\eqref{eq:algoGFDN} to deal with mass one unknowns. Indeed, let us consider
  \[
    \tilde{\psi}=\frac{\psi}{\|\psi\|_{L^2}}=\frac{\psi}{\sqrt{m}}.
  \]
Then, the algorithm~\eqref{eq:algoGFDN} becomes
\begin{equation*}
  \begin{array}{l}
    \tilde{\psi}^0=\psi_0/\sqrt{m}, \text{ such that } \|\psi^0\|_{L^2}^2=m,\\
    n=0.\\
    \text{Repeat }\\[3mm]
    \qquad \left |
    \begin{array}{l}
      \displaystyle \text{Solve } \left(\textrm{Id}+\delta t \left(H-g\left(m|\tilde{\psi}^n|^2\right)\right)\right) \varphi^{n+1}=\tilde{\psi}^n,\\[1mm]
      \displaystyle
      \tilde{\psi}^{n+1}={\varphi^{n+1}}/{\|\varphi^{n+1}\|_{L^2}},\\
      n=n+1,
    \end{array} \right .\\
    \\
    \text{until } \|\tilde{\psi}^{n+1}-\tilde{\psi}^n\|_{L^2}<\varepsilon,\\[2mm]
    \psi^{N+1}=\sqrt{m}\tilde{\psi}^{N+1}.
  \end{array}
\end{equation*}
\end{remark}

\subsection{Space discretization}
\label{sec:space_disc}

We obtained the discretization in time of the normalized gradient flow in the previous section. To complete the discretization of the flow, we now proceed to the space discretization of the operator $H$. We
recall that $H$ is defined as a Laplace operator on each edge $e\in\mathcal{E}$
with boundary conditions given for each vertex $v$ by
\begin{equation*}
A_v \psi(v) + B_v\psi'(v) = 0,
\end{equation*}
where $\psi(v) = (\psi_e(v))_{e\sim v}$, $\psi'(v) =
(\partial_x\psi_e(v))_{r\sim v}$ are vectors, with $\partial_x \psi_e(v)$ the
outgoing derivative on $e$ at $v$, and $(A_v,B_v)$ are matrices (see Section~\ref{sec:preliminaries}). 

For each edge $e\in\mathcal{E}$, we consider $N_e\in\mathbb{N}^*$ the number of
interior points and $\{x_{e,k}\}_{1\leq k\leq N_e}$ a uniform discretization of
the interval $I_e = [0,l_e]$, \textit{i.e.} 
\begin{equation*}
x_{e,0} : = 0 < x_{e,1}< \ldots<x_{e,N_e}< x_{e,N_e+1} : = l_e,
\end{equation*}
with $x_{e,k+1} - x_{e,k} = l_e/(N_e+1) : = \delta x_e$ for $0\leq k\leq
N_e$ (see Figure \ref{fig:disc_edge}). We denote $v_1$ the vertex at $x_{e,0}$, $v_2$ the one at $x_{e,N_e+1}$
and, for any $\psi\in H^1_D(\mathcal G)$, for all $e\in\mathcal{E}$
and $1\leq k\leq N_e$, 
\begin{equation*}
\psi_{e,k} : = \psi_e (x_{e,k}),
\end{equation*}
as well as $\psi_{e,v} : =  \psi_e(y_v)$ for $v\in\{v_1,v_2\}$, where $y_{v_1} =
x_{e,0}$ and $y_{v_2} = x_{e,N_e+1}$. 
\begin{figure}[H]
  \centering
  \begin{tikzpicture}
  \draw (-3,0) -- (3,0);
  \node[label=above:$v_1$] at (-3,0) {$\times$};
  \node[label=above:$v_2$] at (3,0) {$\times$};
  \node[below] at (-3,-0.1) {$x_{e,0}$};
  \node[below] at (-2.25,-0.1) {$x_{e,1}$};
  \node[below] at (-1.5,-0.1) {$x_{e,2}$};
  \node[below] at (3,-0.1) {$x_{e,N_e+1}$};
  \foreach \x in {-2.25,-1.5,...,2.25} {
    \node at (\x,0) {$\bullet$};
  };
\end{tikzpicture}  
  \caption{Discretization mesh of an edge $e\in \mathcal{E}$}
  \label{fig:disc_edge}
\end{figure}
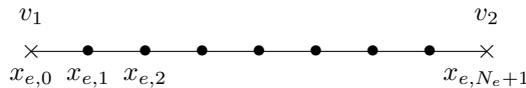

We now assume that $N_e\geq 3$. For any $2\leq k\leq N_e-1$, the
second order approximation of the Laplace operator by finite differences on $e$ is given by
\begin{equation*}
\Delta \psi(x_{e,k}) \approx \frac{\psi_{e,k-1} - 2\psi_{e,k} + \psi_{e,k+1}}{{\delta x_e}^2}.
\end{equation*}
For the case $k = 1$ and $k = N_e$, the approximation requires $\psi_{e,v_1}$
and $\psi_{e,v_2}$ and we have to use the boundary conditions in order to
evaluate them. We use second order finite differences to approximate them as
well. For $-2\leq j\leq 0$, we denote 
\begin{align*}
\psi_{e,v_1,j} = \psi_{e}(x_{e,|j|})\quad\textrm{and}\quad\psi_{e,v_2,j} = \psi_{e}(x_{e,N_e+j}).
\end{align*}
We have the approximation of the outgoing derivative from $e$ at $v\in\{v_1,v_2\}$
\begin{equation*}
\psi_e'(x_{e,v}) \approx \frac{3 \psi_{e,v,0} - 4 \psi_{e,v,-1} + \psi_{e,v,-2}}{2\delta x_e}.
\end{equation*}
Assuming that $\delta x = \delta x_e$ for every edge $e\in \mathcal{E}$ to simplify the presentation, this leads to the approximation of the boundary conditions
\begin{equation*}
A_v \psi_{v,0} + B_v\left(\frac{3\psi_{v,0} - 4 \psi_{v,-1} + \psi_{v,-2}}{2\delta x}\right)= 0,
\end{equation*}
where $\psi_{v,j} = (\psi_{e,v,j})_{e\sim v}$. Assuming that $2\delta x A_v + 3 B_v$ is invertible, this is equivalent to
\begin{equation}\label{eq:edgevalue}
\psi_{v,0} = \left(2\delta x A_v + 3B_v \right)^{-1} B_v\left(4 \psi_{v,-1} - \psi_{v,-2}\right).
\end{equation}
Thus, we can explicitly express the value of $\psi_{e,v_1}$
(resp. $\psi_{e,v_2}$) : it depends linearly on the vectors $\psi_{v_1,-1}$ and
$\psi_{v_1,-2}$ (resp. $\psi_{v_2,-1}$ and $\psi_{v_2,-2}$). It is then possible
to deduce an approximation of the Laplace operator at $x_{e,1}$ and
$x_{e,N_e}$. That is, there exists $(\alpha_{e,v})_{e\sim v}\subset \mathbb{R}$,
for $v\in\{v_1,v_2\}$, such that 
\begin{equation*}
\Delta \psi(x_{e,1}) \approx \frac{\displaystyle \psi_{e,2} - 2\psi_{e,1} + \sum_{e\sim v_1}\alpha_{e,v_1}(4 \psi_{e,v_1,-1} - \psi_{e,v_1,-2})}{{\delta x}^2},
\end{equation*}
and
\begin{equation*}
\Delta \psi(x_{e,N_e}) \approx \frac{\displaystyle \psi_{e,N_e-1} - 2\psi_{e,N_e} + \sum_{e\sim v_2}\alpha_{e,v_2}(4 \psi_{e,v_2,-1} - \psi_{e,v_2,-2})}{{\delta x}^2}.
\end{equation*}
Since $(\psi_{e,v,j})_{-2\leq j\leq 0,v\in\{v_1,v_2\}}$ are interior mesh points
from the other edges, we limit our discretization to the interior mesh
points of the graph. The approximated values of $\psi$ at each
vertex are computed using~\eqref{eq:edgevalue}. We denote $\boldsymbol{\psi} =
(\psi_{e,k})_{1\leq k\leq N_e, e\in\mathcal{E}}$ the vector in $\mathbb{R}^{N_T}$,
with $N_T = \sum_{e\in\mathcal{E}} N_e$, representing the values of $\psi$ at each
interior mesh point of each edge of $\mathcal{G}$. We introduce the matrix
$[\boldsymbol{H}]\in\mathbb{R}^{N_T\times N_T}$ corresponding to the
discretization of $H$ on the interior of each edge of the graph, which yields the
approximation 
\begin{equation*}
H\psi \approx [\boldsymbol{H}]\boldsymbol{\psi}.
\end{equation*}

\subsection{Space-time discretization}

Finally, we obtain the Backward Euler Finite Difference (BEFD) scheme
approximating~\eqref{eq:GFDN}. Let $\boldsymbol{\psi}^0\in\mathbb{R}^{N_T}$. We
compute the sequence $(\boldsymbol{\psi}^n)_{n\geq 0}\subset\mathbb{R}^{N_T}$ given
by 
\begin{equation}
  \label{eq:befd}
  \left\{
    \begin{aligned}
{\boldsymbol{\varphi}}^{n+1}  &= \boldsymbol{\psi}^n - \delta t\left([\boldsymbol{H}]{\boldsymbol{\varphi}}^{n+1} - [g(|\boldsymbol{\psi}^n|^{2})]{\boldsymbol{\varphi}}^{n+1}\right),\\
     \displaystyle \boldsymbol{\psi}^{n+1} &= \sqrt{m}\frac{{\boldsymbol{\varphi}}^{n+1}}{\|{\boldsymbol{\varphi}}^{n+1}\|_{\ell^2}},
\end{aligned}\right.
\end{equation}
where $[g(|\boldsymbol{\psi}^n|^{2})]\in\mathbb{R}^{N_T\times N_T}$ is a diagonal
matrix whose diagonal is the vector $g(|\boldsymbol{\psi}^n|^2)$ and
$\|{\boldsymbol{\varphi}}^{n+1}\|_{\ell^2}$ is the usual $\ell^2$-norm on the graph
$\mathcal{G}$ of ${\boldsymbol{\varphi}}^{n+1}$. This scheme has been studied on
rectangular domains (with an additional potential operator) and Dirichlet
boundary conditions~\cite{BaDu04}  and is known to be unconditionally
stable. Since it is implicit, the computation of ${\boldsymbol{\varphi}}^{n+1}$
involves the inversion of a linear system whose matrix is 
\begin{equation*}
[\boldsymbol{M}_n] = [\mathbf{Id}] + \delta t \left([\boldsymbol{H}] - [g(|\boldsymbol{\psi}^n|^{2})] \right),
\end{equation*}
where $[\mathbf{Id}]$ is the identity matrix, and right-hand-side is $\boldsymbol{\psi}^n$. Using the matrix $[\mathbf{H}]$,
we may also compute the energy. For instance, in the case where $g(z)
=z$, by using the standard $\ell^2$ inner product on the graph, we
obtain
\begin{equation}
  \label{eq:energy_num}
  E(\boldsymbol{\psi}^n) = \frac12 \scalar{ [\mathbf{H}]
  \boldsymbol{\psi}^n}{\boldsymbol{\psi}^n}_{\ell^2} - \frac{1}{4}
 \scalar{ (\boldsymbol{\psi}^n)^2}{(\boldsymbol{\psi}^n)^2 }_{\ell^2}.
\end{equation}

To illustrate our methodology, we give below an example of a star-graph with $3$
edges (see Figure~\ref{fig:f1f2} (\subref{fig:f1})). The operator $H$ is given with Dirichlet boundary conditions for the
exterior vertices and Kirchoff-Neumann conditions for the central vertex. We can see on Figure~\ref{fig:f1f2} (\subref{fig:f2})
the positions of the non zero coefficients of the corresponding
matrix $[\boldsymbol{H}]$ when the discretization is such that $N_e = 10$, for
each $e\in\mathcal{E}$. The coefficients accounting for the Kirchhoff boundary
condition are the ones not belonging to the tridiagonal component of
the matrix.
\begin{figure}[!tbp]
  \begin{subfigure}[b]{0.4\textwidth}
  \begin{tikzpicture}
    \coordinate (A) at (-2,0);
    \coordinate (O) at (0,0);
    \coordinate (B) at (2,-2);
    \coordinate (C) at (1.5,1.5);


    \node  at (A) {$\bullet$};
    \node at (O) {$\bullet$};
    \node at (B)  {$\bullet$};
    \node at (C) {$\bullet$};

    \draw (A) -- (O);
    \draw (B) -- (O);
    \draw (C) -- (O);
  \end{tikzpicture}  
  \caption{Star-graph with $3$ edges.}
  \label{fig:f1}
  \end{subfigure}
  \hfill
  \begin{subfigure}[b]{0.4\textwidth}
    \includegraphics[width=0.7\textwidth]{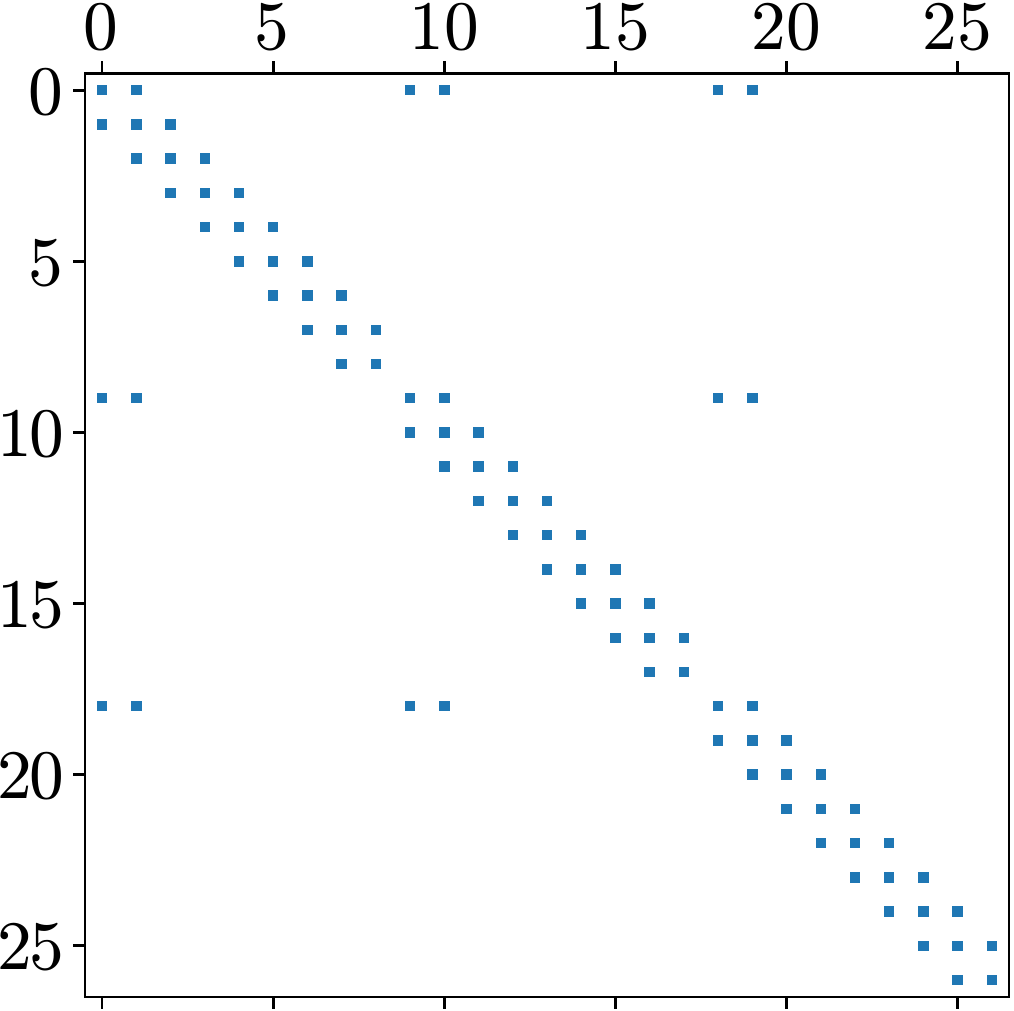}
    \caption{Matrix representation of $H$.}
    \label{fig:f2}
  \end{subfigure}
  \caption{An example for a star-graph.}
\label{fig:f1f2}
\end{figure}

\begin{remark}
  We have implemented this space discretization in the framework of the Grafidi library~\cite{Grafidi}, a Python library which we have developed for the numerical simulation on quantum graph and which is presented in~\cite{2021_BDL_Arxiv}. Note that finite differences on graphs have also been implemented in 
a library developed in Matlab by R. H. Goodman, available in~\cite{GoodmanLib} and which has been used in particular in~\cite{Go19,KaPeGo19}.
\end{remark}

\section{Numerical experiments}
\label{sec:experiments}

We present here various numerical computations of ground states using the Backward
Euler Finite Difference scheme~\eqref{eq:befd}. Even though
the (BEFD) method was built for a general nonlinearity, for simplicity
we focus in this section
on the computations of the ground states of the focusing cubic nonlinear
Schr\"odinger (NLS) equation on
a graph $\mathcal{G}$, that reads
\begin{equation}
  \label{eq:nls2}
  i\partial_t\psi = H \psi - |\psi|^2 \psi.
\end{equation}
Explicit exact solutions are available for (NLS) on various
graphs, in particular star graphs. We use the two-edges star graph in Section~\ref{sec:2edges} to validate 
our implementation of the (BEFD) method and to show its  efficiency to compute ground states. We
present in Section~\ref{sec:non-compact} some numerical results for non compact graphs for which no explicit
solutions are available. More examples are presented in a
 companion paper~\cite{2021_BDL_Arxiv}.

\subsection{Two-edges star-graph}
\label{sec:2edges}
The two-edges star-graph is one of the simplest graph. We
identify the graph $\mathcal{G}=\mathcal{G}_2$ as the collection of two-half lines connected to a
central vertex $A$. Each edge is referred to with index $i=1,2$ (see
Figure~\ref{fig:2_star_graph}). The coordinate of vertex $A$ is therefore both
$x_1=0$ and $x_2=0$. The unknown $\psi$ of~\eqref{eq:nls2} can be
thought as the collection
\[\psi=(\psi_{1},\psi_{2})^T,\]
each function $\psi_{i}$ living on the edge $i=1,2$.
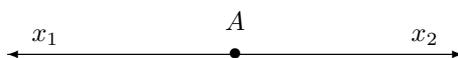
\begin{figure}[H]
  \centering
\begin{tikzpicture}
  \draw[-] (-3,0) -- (3,0);
  \node[label=above:$A$] at (0,0) {$\bullet$};
  \draw[<-,>=latex] (-3,0) -- (0,0);
  \draw[->,>=latex] (0,0) -- (3,0);
  \node[above] at (-2.5,0) {$x_1$};
  \node[above] at (2.5,0) {$x_2$};
\end{tikzpicture}    
  \caption{Two star-graph}
  \label{fig:2_star_graph}
\end{figure}
\subsubsection{Kirchhoff condition}
The ground state of the cubic nonlinear Schr\"odinger equation~\eqref{eq:nls2}
on the real line is known to be the soliton. To compute it on a two-edges
star-graph, we identify the real line $\mathbb{R}$ to the graph $\mathcal{G}_2$ with Kirchhoff condition at the vertex located at $x=0$ (see~\cite{AdCaFiNo12}). The Kirchhoff condition on $\mathbb{R}$ is
\begin{equation*}
  \psi(0^-)=\psi(0^+),\qquad \psi'(0^-)=\psi'(0^+),
\end{equation*}
with $\psi'$ denoting the usual forward derivative, whereas on $\mathcal{G}_2$,
it is
\begin{equation*}
  \psi_1(0)=\psi_2(0),\qquad \psi_1'(0)+\psi_2'(0)=0.
\end{equation*}
The energy is
\[
  E_{\text{NLS}}(\psi)=\frac12 \|\psi'
  \|_{L^2(\mathbb{R})}^2-\frac{1}{4}\|\psi\|_{L^4(\mathbb{R})}^4, \quad
  \psi \in H^1(\mathbb{R}),
\]
or similarly
\[
  E_{\text{NLS}}(\psi)=\sum_{i=1}^2\left(\frac12 \|\psi_i'
  \|_{L^2(\mathbb{R}_{x_i}^+)}^2-\frac{1}{4}\|\psi_i\|_{L^4(\mathbb{R}_{x_i}^+)}^4\right), \quad
  \psi \in H^1(\mathcal{G}_2).
\]
The minimum of the functional $E_{\text{NLS}}$ among functions of
$H^1(\mathbb{R})$ with squared $L^2$-norm equal to $m>0$ is given (up
to phase and translation) by
\begin{equation}
  \label{eq:soliton}
  \phi_m(x)=\frac{m}{2\sqrt{2}}\frac{1}{\text{cosh}({m}x/4)},
\end{equation}
and
\begin{equation*}
  E_{\text{NLS}}(\phi_m)=-\frac{m^3}{96}.
\end{equation*}
In order to simulate the two semi-infinite edges originated from the central
vertex, we consider two finite edges of length $40$. The graph is presented on
Figure~\ref{fig:graph_2Edges} (left).
\begin{figure}[htbp!]
  \centering
  \begin{tabular}{cc}
    \begin{tabular}{c}
  \includegraphics[width=.42\textwidth]{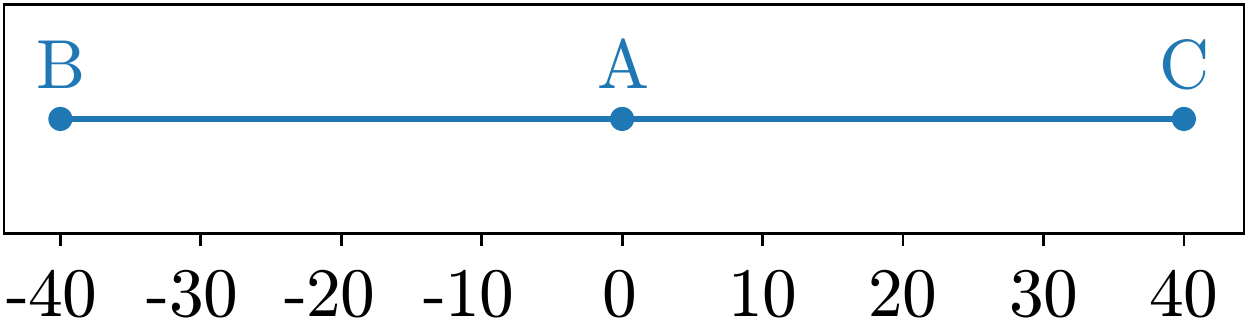}
    \end{tabular}
    &
      \begin{tabular}{c}
      \includegraphics[width=.38\textwidth]{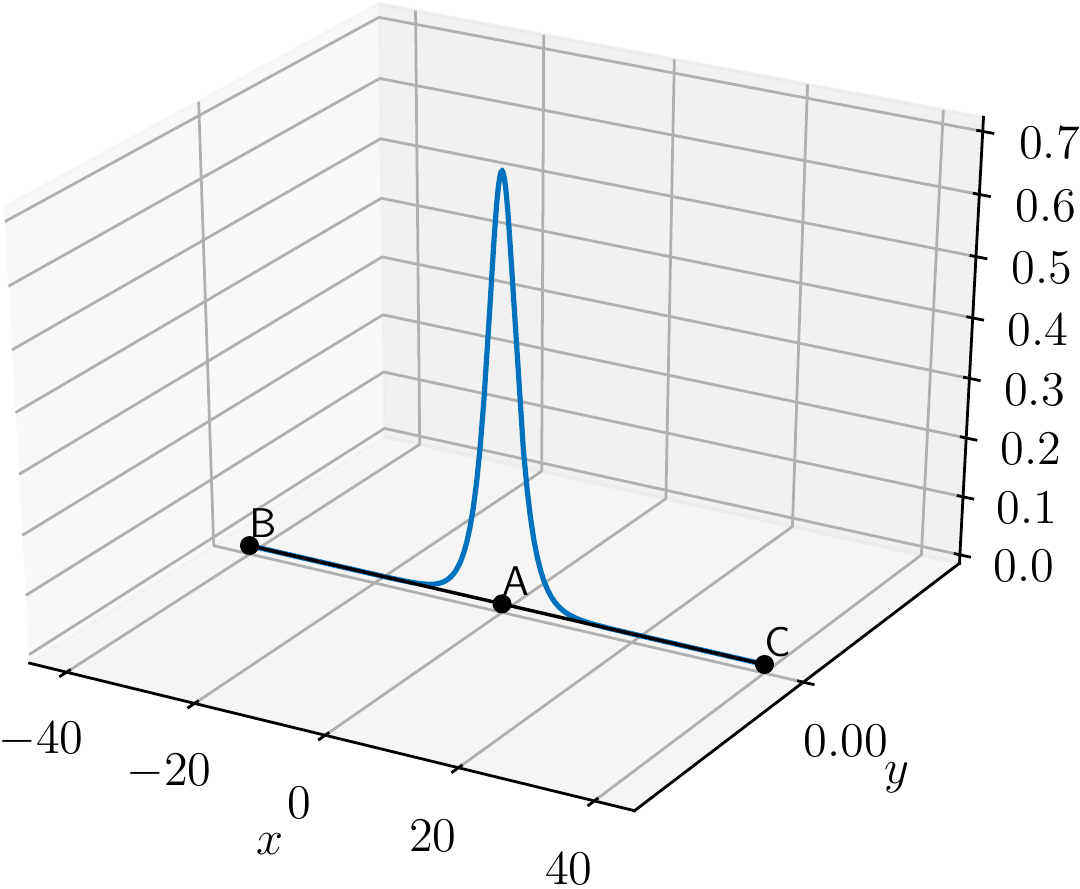}
      \end{tabular}
    \\
  \end{tabular}
  \caption{Two-edges graph (left) and the exact solution $\phi_m(x)$ for
    $m=2$ (right)}
  \label{fig:graph_2Edges}
\end{figure}
\FloatBarrier
We discretize each edge with $N_e=4000$ nodes and set homogeneous Dirichlet boundary
conditions at the external vertices ($B$ and $C$ on
Figure~\ref{fig:graph_2Edges}). The time step is $\delta t=10^{-2}$. The
mass is $m=2$. The exact solution is plotted on
Figure~\ref{fig:graph_2Edges} (right). 
The initial datum is chosen as a Gaussian of mass $m/2$ on each edge, 
namely
\[
  \psi_0(x)=\sqrt{\frac{10 m}{\sqrt{5\pi}}} e^{-10x^2}.
\]
We plot on Figure~\ref{fig:soliton} both the exact solution $\phi_m$~\eqref{eq:soliton} and the numerical one $\phi_{m,\text{num}}$
obtained after $3000$ iterations (left), as well as the error
$|\phi_m-\phi_{m,\text{num}}|$ (right). The error is plotted for a fixed $\delta x$. We discuss the variation of the error with respect to $\delta x$ in Figure \ref{fig:conv_curves} and observe that the scheme is of order $2$.
\begin{figure}[htbp!]
  \centering
  \begin{tabular}{cc}
    \includegraphics[width=.38\textwidth]{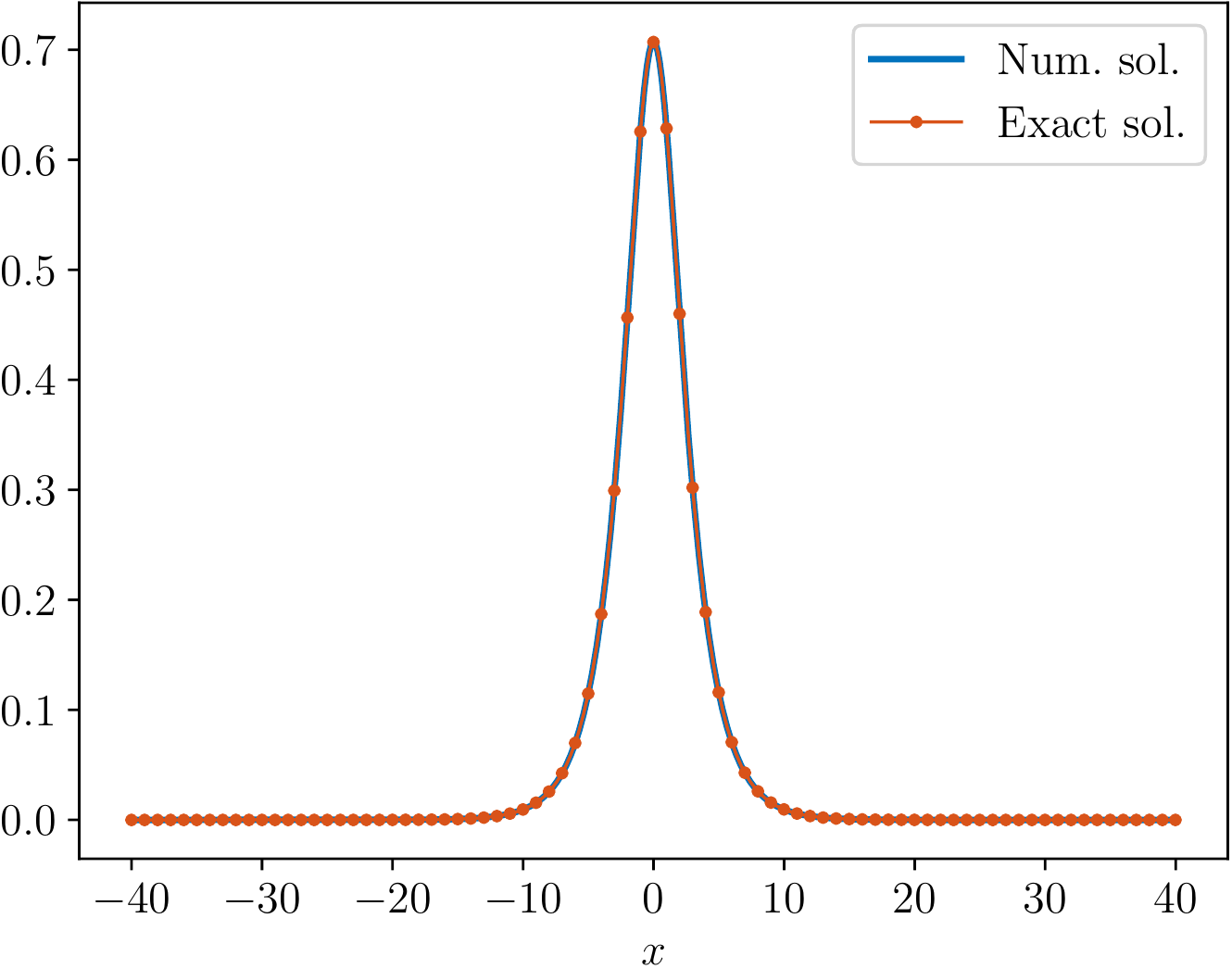} &
    \includegraphics[width=.38\textwidth]{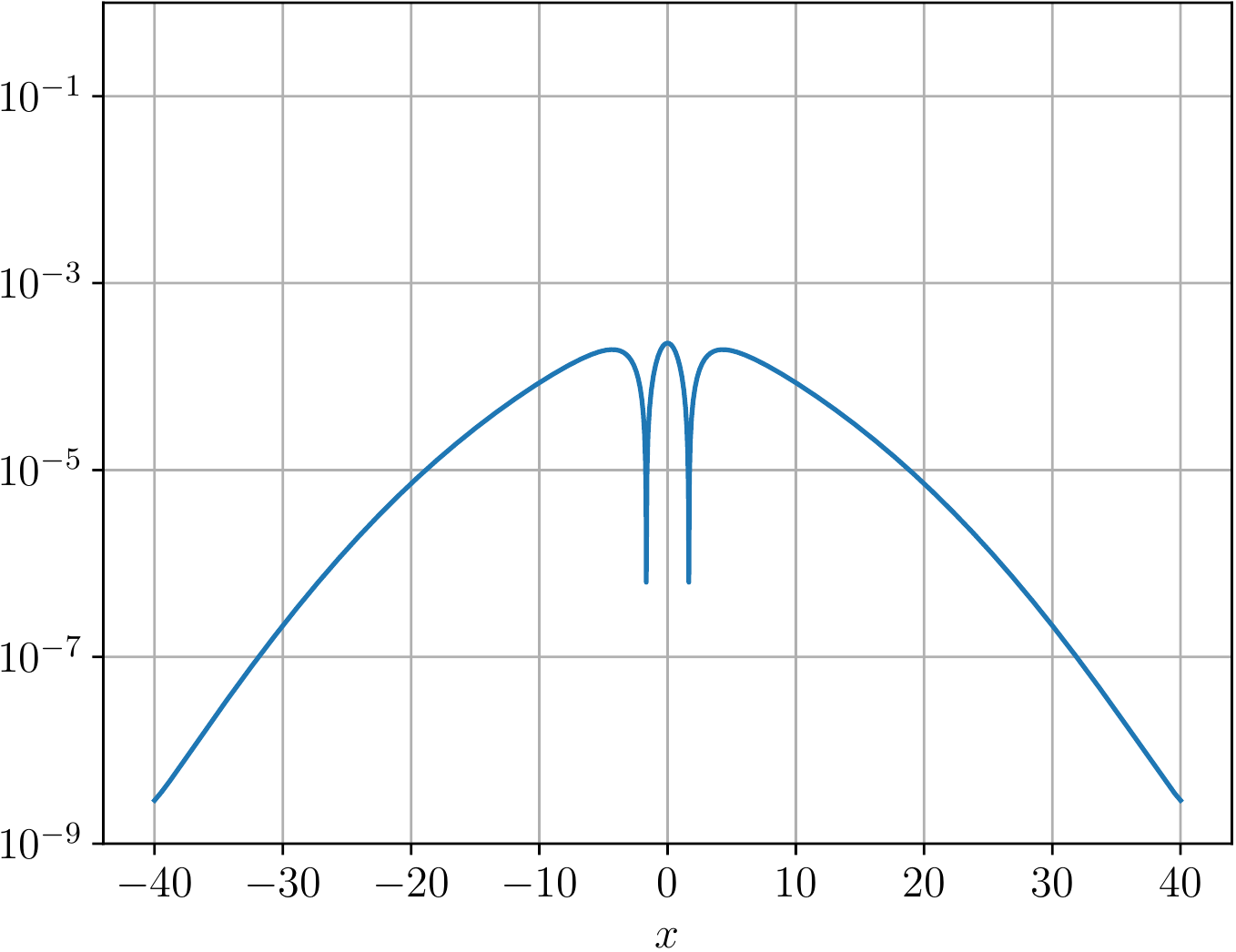}\\
    $\phi_{m}$ and $\phi_{m,\text{num}}$ & $|\phi_m-\phi_{m,\text{num}}|$
  \end{tabular}
  \caption{Comparison between $\phi_{m}$ and $\phi_{m,\text{num}}$}
  \label{fig:soliton}
\end{figure}
We obtain a very close numerical solution. Since the initial data is symmetric and centered on $0$, our solution is also symmetric and centered on $0$.

The (BEFD) method allows to compute the exact
energy and show that the scheme is energy diminishing. 
We plot in Figure~\ref{fig:soliton_energy} the evolution of the numerical energy
using~\eqref{eq:energy_num}
and the comparison with the exact energy. The scheme is clearly energy
diminishing and we obtain a very good agreement with the exact energy.
\begin{figure}[htbp!]
  \centering
  \includegraphics[width=.38\textwidth]{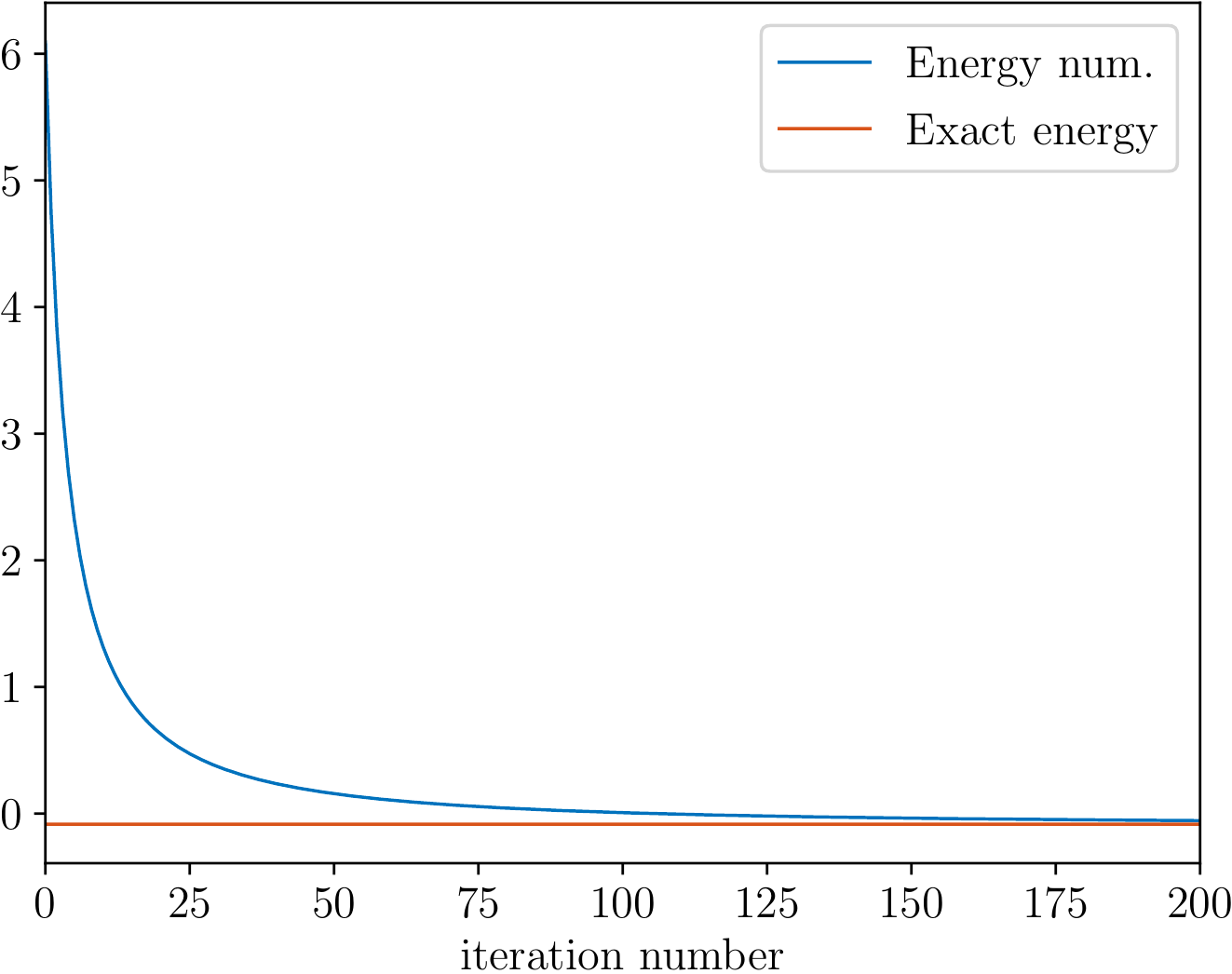}
  \caption{Evolution of the energy when computing the ground state of~\eqref{eq:nls2} for
    $x\in \mathbb{R}$ compared to $E_{\text{NLS}}$.}
  \label{fig:soliton_energy}
\end{figure}

\subsubsection{$\delta$-condition}
We consider now a $\delta$-condition at the central vertex $A$ of the graph
$\mathcal{G}_2$.
The unknown
$\psi_\delta=(\psi_{\delta,1},\psi_{\delta,2})^T$ is the collection
of $\psi_{\delta,i}$ living on each edge $i=1,2$.
Recall that the boundary conditions at $A$ are
\begin{equation*}
  \psi_{\delta,1}(0)=\psi_{\delta,2}(0),\qquad \psi_{\delta,1}'(0)+\psi_{\delta,2}'(0)=\alpha \psi_{\delta,1}(0).
\end{equation*}
The parameter $\alpha$ is interpreted as the strength of the $\delta$
potential and we focus on the attractive case ($\alpha<0$). The mass and energy are
\[
  M(\psi_\delta)=\sum_{i=1}^2\int_{\mathbb{R}_{x_i}^+}
  |\psi_{\delta,i}(x_i)|^2\, dx_i,
\]
and
\[
  E_{\delta}(\psi_\delta)=\sum_{i=1}^2 \left(\int_{\mathbb{R}_{x_i}^+}
    \frac{|\psi_{\delta,i}'(x_i)|^2}{2} - \frac{|\psi_{\delta,i}(x_i)|^4}{4}\, dx_i + \frac{\alpha}{4}|\psi_{\delta,i}(0)|^2\right).
\]
Explicit ground state solutions were provided in~\cite{GoHoWe04} in
the cubic case (see also~\cite{adami2012stationary,AdNoVi13} for the
general case). Define $a$ by
\begin{equation*}
  a=\frac{1}{\sqrt{\omega}} \text{arctanh} \left (
    \frac{|\alpha|}{2\sqrt{\omega}}\right),
\end{equation*}
and define the function
$\phi_\delta=(\phi_{\delta,1},\phi_{\delta,2})$ by
\begin{equation*}
  \phi_{\delta,i}(x_i)=\frac{\sqrt{2\omega}}{\displaystyle \text{cosh}\left(\sqrt{\omega}\left(x_i-\frac{\alpha}{|\alpha|}a\right)\right)}.
\end{equation*}
The mass of $\phi_{\delta}$ is explicitly given by
\[
m_{\delta}=M(\phi_{\delta})=4\sqrt{\omega}+2\alpha,
  \]
and the function $\phi_{\delta}$ has been constructed so that it is
the minimizer of $E_{\delta}(\psi_\delta)$ with constrained mass $m_{\delta}$.
The energy might be explicitly calculated :
\[
  E_\delta(\phi_\delta)=-\frac{2}{3}\omega^{\frac32}-\frac{\alpha^3}{12} =
-\frac{m_\delta^3}{96} + \frac{m_\delta^2\alpha}{16} - \frac{m_\delta\alpha^2}{8}.
\]
Like in the previous section, we apply the (BEFD) method to compute the
ground state. We take the same numerical parameters concerning the mesh size and
the approximation graph of Figure~\ref{fig:graph_2Edges} (left). The numerical solution
compared to the exact one with $\omega=1$, $\alpha=-1$ and therefore mass
$m_\delta=2$ is presented on Figure~\ref{fig:compar_delta_2Edges} (left). The initial data are Gaussian on both edges equal
to $\psi_0(x_i)=\rho e^{-10  x_i^2}$, $i=1,2$, with $\rho>0$ such that
$M(\psi_0)=m_\delta$. In
order to focus close to the vertex~$A$, we choose to plot these solutions on
$[-10,10]$. Once again, the numerical solution is very close to the exact
ground state.
\begin{figure}[htbp!]
  \centering
  \begin{tabular}{cc}
    \includegraphics[width=.38\textwidth]{comparaison_GS_delta_attractive_2Edges-crop.pdf} &
    \includegraphics[width=.38\textwidth]{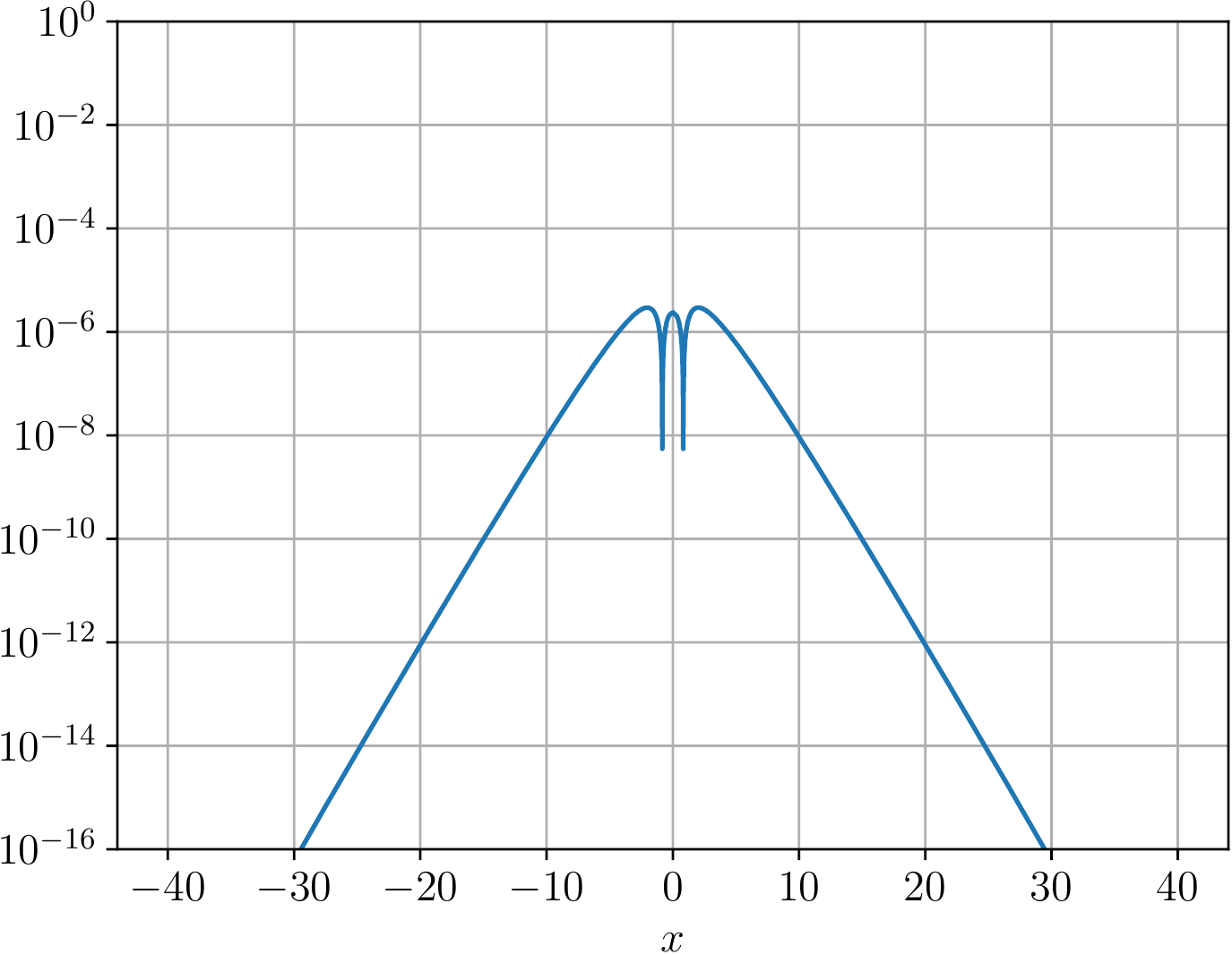} \\
    $\phi_{\delta}$ and $\phi_{\delta,\text{num}}$ & $|\phi_{\delta}-\phi_{\delta,\text{num}}|$\\
  \end{tabular}
  \caption{Comparison of $\phi_{\delta}$ and $\phi_{\delta,\text{num}}$ for $\delta$
    interaction, $\omega=1$ and $\alpha=-1$.}
  \label{fig:compar_delta_2Edges}
\end{figure}
A closer look on the error function
$|\phi_{\delta}-\phi_{\delta,\text{num}}|$ in
logarithmic scale (see Figure~\ref{fig:compar_delta_2Edges}, right) confirms the
accuracy of the numerical solution.
Finally, we plot the evolution of the energy on Figure~\ref{fig:compar_energy_delta_2Edges}. We restrict ourselves to $1000$ iterations on the horizontal
axis since the convergence is really fast. The agreement with the exact energy
is notable.
\begin{figure}[htbp!]
  \centering
  \includegraphics[width=.38\textwidth]{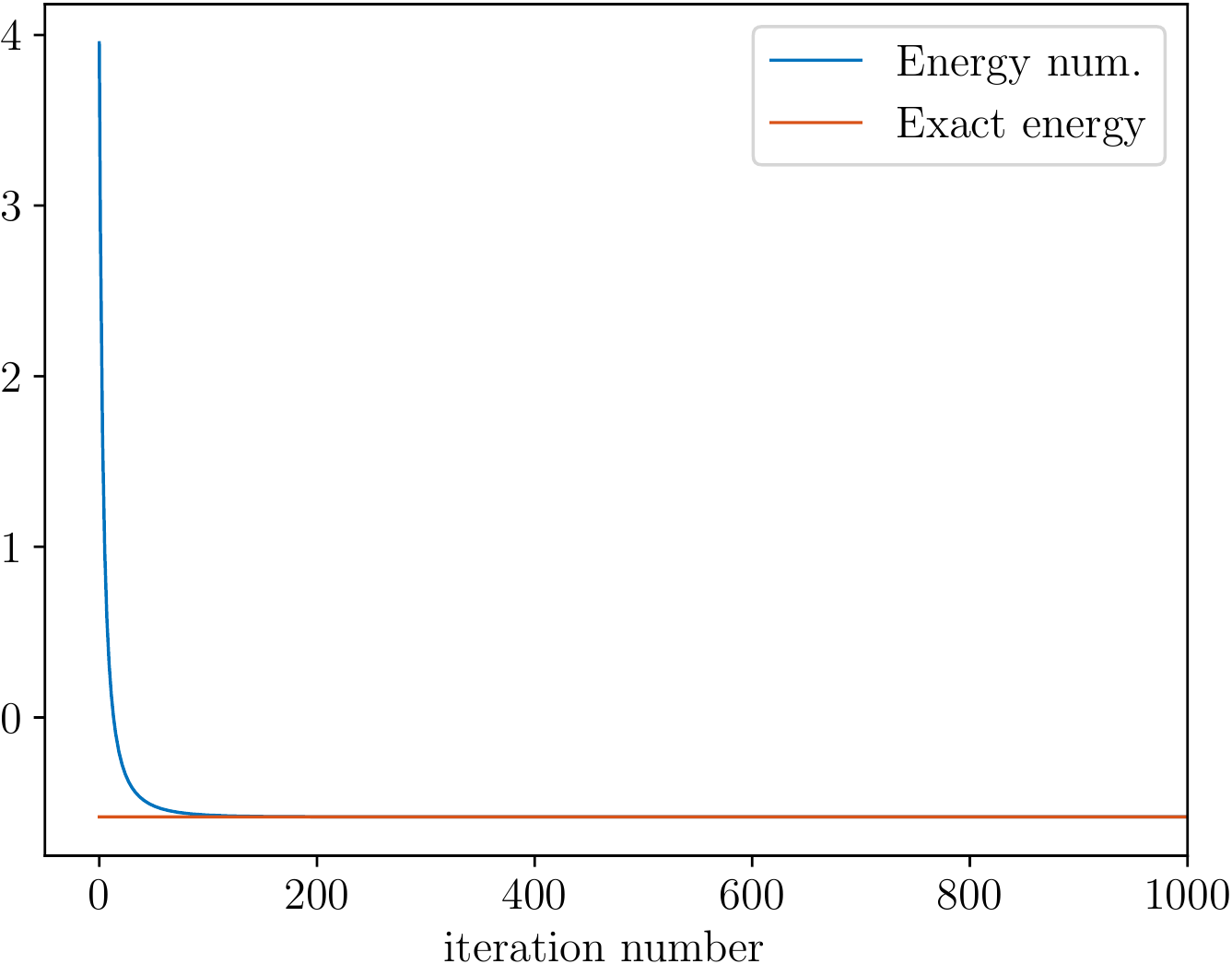}
  \caption{Evolution of the energy when computing the ground state of~\eqref{eq:nls2} with $\delta$ condition compared to $E_{\delta}$.}
  \label{fig:compar_energy_delta_2Edges}
\end{figure}
\FloatBarrier
Using the exact solutions when considering Kirchhoff and $\delta$ conditions, we
are able to evaluate the order of the numerical scheme with respect to the
spatial mesh size. As it was described in Section~\ref{sec:space_disc}, the
scheme should be of second order in space. To confirm this, we make various
simulations for different mesh sizes $\delta x$ and present the results in Figure~\ref{fig:conv_curves} for both Kirchhoff and $\delta$ conditions. In the two
cases, the order of convergence is $2$, as expected.
\begin{figure}[htbp!]
  \centering
  \begin{tabular}{cc}
    \includegraphics[width=.38\textwidth]{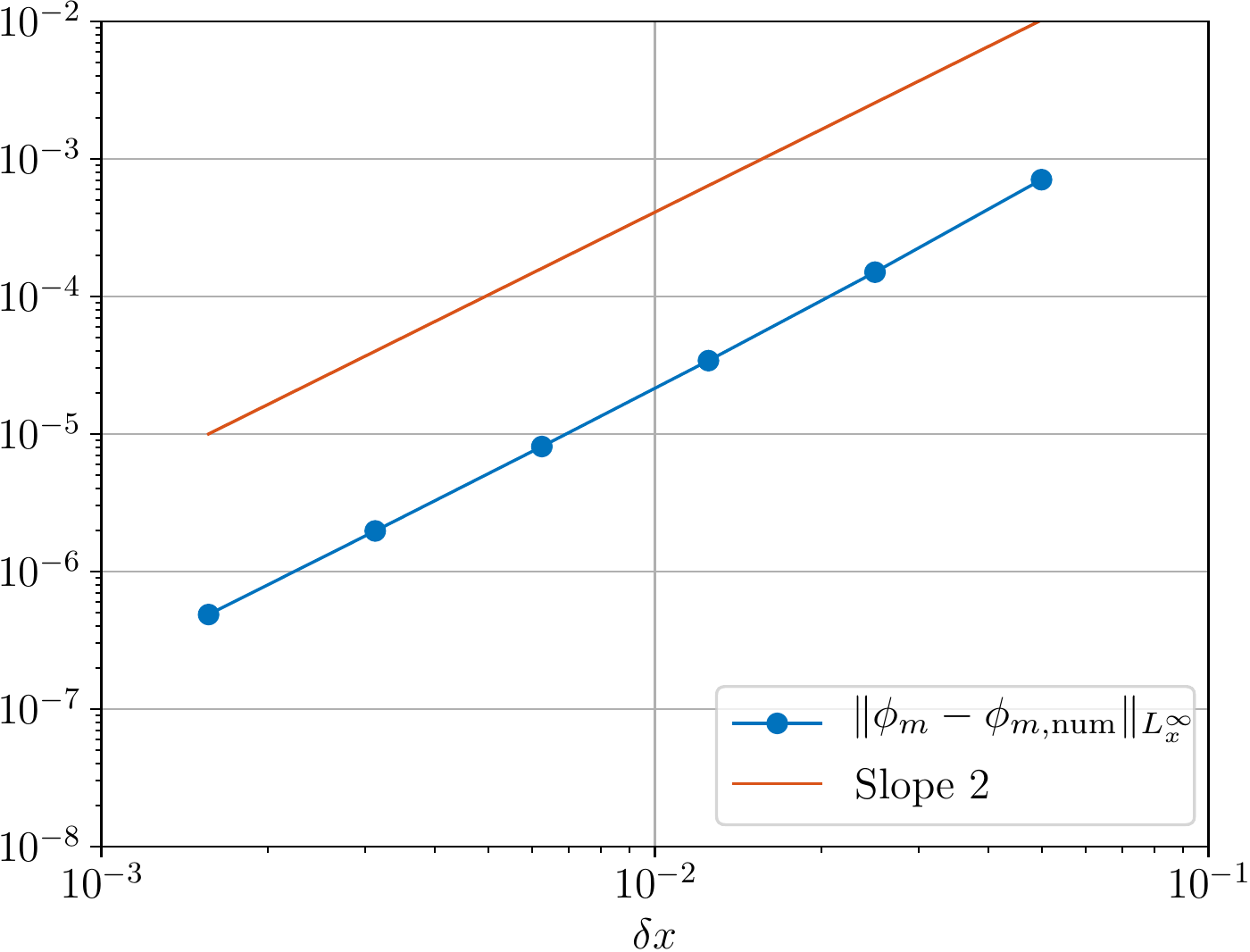} &
    \includegraphics[width=.38\textwidth]{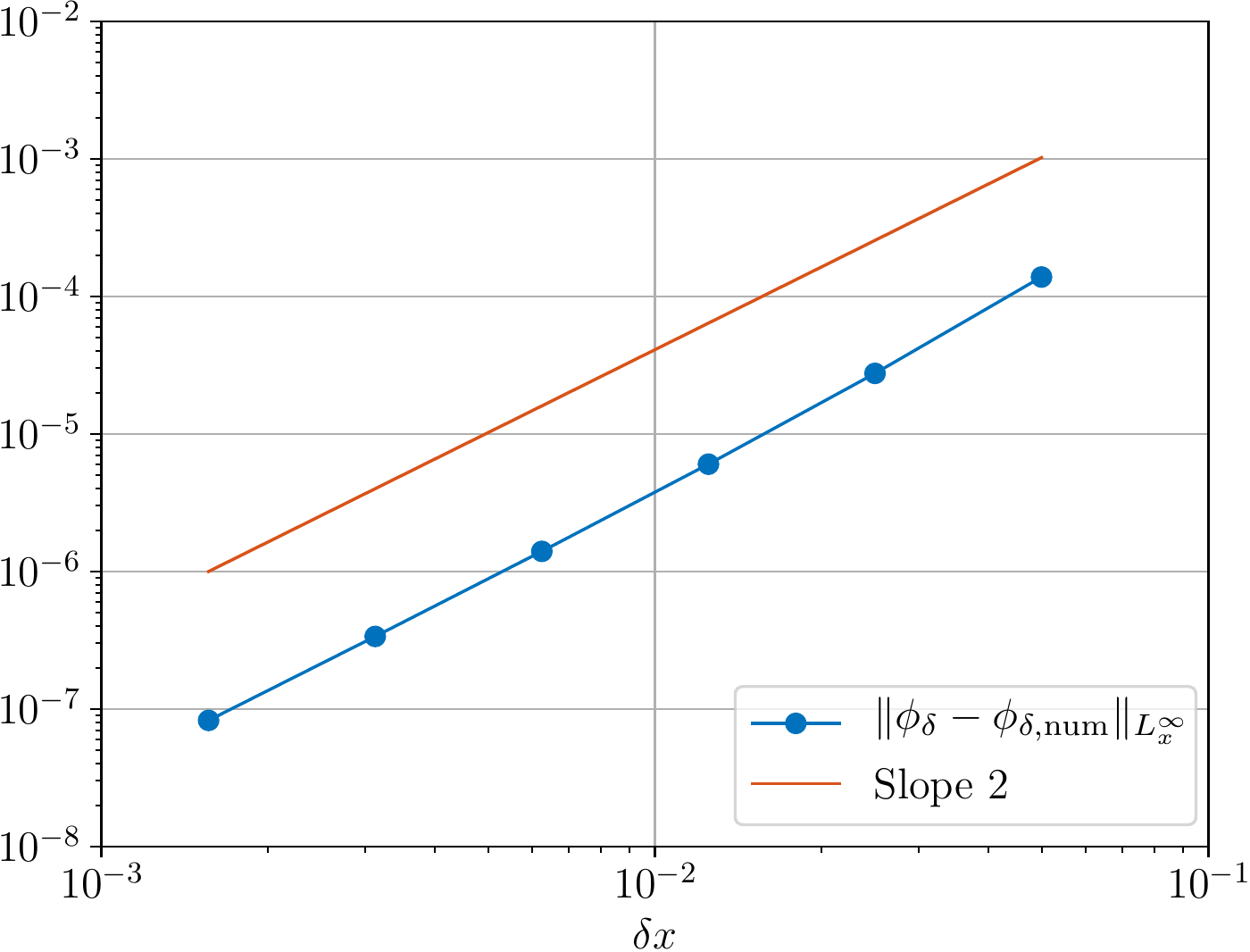}
  \end{tabular}
  \caption{Convergence curves for Kirchhoff (left) and $\delta$ (right) conditions.}
  \label{fig:conv_curves}
\end{figure}
\subsubsection{$\delta'$-condition}
The $\delta'$-condition on star graph is usually defined by
interchanging functions and their derivatives in the definition of the
$\delta$-condition (see e.g.~\cite{BeKu13}).
We prefer here to use the concept of $\delta'$ on the graph corresponding to the $\delta'$
interaction on the line, and give a precise definition in what follows.
As in the previous
section, the unknown 
$\psi_{\delta'}=(\psi_{\delta',1},\psi_{\delta',2})^T$ is the
collection of $\psi_{\delta',i}$ living on each edge $i=1,2$.
The boundary conditions at $A$ are
\begin{equation}
  \label{eq:cond_deltaprime}
    \psi_{\delta',1}(0)=\psi_{\delta',2}(0)+\beta \psi_{\delta',2}'(0) ,\qquad \psi_{\delta',1}'(0)+\psi_{\delta',2}'(0)=0,
\end{equation}
with $\beta>0$.
The mass and energy are
\[
  M(\psi_{\delta'})=\sum_{i=1}^2\int_{\mathbb{R}_{x_i}^+}
  |\psi_{\delta',i}(x_i)|^2\, dx_i
\]
and
\[
  E_{\delta'}(\psi_{\delta'})=\sum_{i=1}^2 \int_{\mathbb{R}_{x_i}^+}\left(
    \frac{|\psi_{\delta,i}'(x_i)|^2}{2} - \frac{|\psi_{\delta',i}(x_i)|^4}{4}\right) dx_i -\frac{1}{2\beta}
    |\psi_{\delta',2}(0)-\psi_{\delta',1}(0)|.
\]
Explicit ground state solutions are provided in~\cite{AdNoVi13}. Let us consider
the transcendental system
\begin{equation}
  \label{eq:trans_sys}
  \left \{
    \begin{array}{l}
      \displaystyle
      \frac{\tanh{(\sqrt{\omega}x_+)}}{\cosh{(\sqrt{\omega}x_+)}}+
      \frac{\tanh{(\sqrt{\omega}x_-)}}{\cosh{(\sqrt{\omega}x_-)}}=0,\\
      \displaystyle
      \frac{1}{\cosh{(\sqrt{\omega}x_+)}}+\frac{1}{\cosh{(\sqrt{\omega}x_-)}}=
      \beta\sqrt{\omega} \frac{\tanh{(\sqrt{\omega}x_+)}}{\cosh{(\sqrt{\omega}x_+)}}.
    \end{array}
  \right .
\end{equation}
We are looking for real solutions such that
\[
  x_-<0<x_+.
\]
When $4/\beta^2<\omega\leq 8/\beta^2$, there exists a unique  couple
$(-\bar x,\bar x)$ 
solution to~\eqref{eq:trans_sys}, where $\bar x$ is  given by
\[
  \bar{x}=\frac{1}{\sqrt{\omega}} \text{arctanh} \left ( \frac{2}{\beta\sqrt{\omega}}\right).
\]
When $8/\beta^2<\omega<+\infty$, 
in addition to the symmetric couple $(-\bar x,\bar
x)$ previously given, we have another, asymmetric,  not explicit,
unique, couple
$(\tilde x_-,\tilde x_+)\in \mathbb{R}^2$ such that
\[
  \tilde x_-<0<\tilde x_+ < |\tilde x_-|.
\]
For brevity in notation, we define
\[
(x_-,x_+)=
\begin{cases}
  (-\bar x,\bar x)&\text{ if }4/\beta^2<\omega\leq 8/\beta^2,\\
  (\tilde x_-,\tilde x_+)&\text{ if }8/\beta^2<\omega<+\infty.
\end{cases}
  \]
The ground state in
both cases is given (up to a phase factor) by
\begin{equation*}
  \phi_{\delta'}(x)=\left \{
    \begin{array}{ll}
      \displaystyle -{\sqrt{2\omega}}/{\cosh(\sqrt{\omega}(x_1+x_-))},& x_1\in
                                                                  [0,+\infty), \\[1mm]
      \displaystyle {\sqrt{2\omega}}/{\cosh(\sqrt{\omega}(x_2+x_+))},& x_2\in
                                                                  [0,+\infty).
    \end{array}
    \right.
\end{equation*}

When $4/\beta^2<\omega\leq 8/\beta^2$, the ground state of~\eqref{eq:nls2} with
boundary condition~\eqref{eq:cond_deltaprime} minimizing the energy
$E_{\delta'}$ with fixed mass $  M(\phi_{\delta'}) $ is an odd function. When
$8/\beta^2<\omega<+\infty$, the ground state is asymmetric.

When $4/\beta^2<\omega\leq 8/\beta^2$, since $\bar{x}=|x_\pm|$, the mass and energy are equal to
\[
  M(\phi_{\delta'})=4\sqrt{\omega}-\frac{8}{\beta},\qquad
  E_{\delta'}(\phi_{\delta'})=\frac23\left(\frac{8}{\beta^3}-\omega^{3/2} \right).
\]
If $8/\beta^2<\omega<+\infty$, the mass and energy are less explicit and are
equal to
\[
  M_{\delta'}(\phi_{\delta'})=
  2\sqrt{\omega}\big(
    2+\tanh(\sqrt{\omega}x_-)-\tanh(\sqrt{\omega}x_+)
  \big),
\]
and
\[
  \begin{array}{rl}
    E_{\delta'}(\phi_{\delta'})& = \displaystyle
    \frac{\omega^{3/2}}{3} \Big(
                                              -2-3(\tanh(\sqrt{\omega}x_-)-\tanh(\sqrt{\omega}x_+))\\
    & \qquad \qquad \qquad +2(\tanh^3(\sqrt{\omega}x_-)-\tanh^3(\sqrt{\omega}x_+))\Big)
    \\
    &\displaystyle -\frac{\omega}{\beta} \left(
      \frac{1}{\cosh(\sqrt{\omega}x_-)} + \frac{1}{\cosh(\sqrt{\omega}x_+)}\right)^2.
  \end{array}
\]
The parameters for the numerical simulations are $\beta=1$, $\delta t=10^{-2}$
and we keep $4000$ nodes per edges to discretize the two-edges graph (see
Figure~\ref{fig:graph_2Edges}, left). The initial data are Gaussian on both edges but
contrary to $\delta$-condition, we select a different sign for the two
edges (to increase the convergence speed).
Namely, $\psi_0(x_1)=-\rho e^{-10  x_1^2}$ and $\psi_0(x_2)=\rho e^{-10
  x_2^2}$, with $\rho >0$ such that
$M_\delta(\psi_0)=M_{\delta'}(\phi_{\delta'})$. In order to
simulate both odd and asymmetric ground states, we select respectively
$\omega=6$ and $\omega=16$. The exact and numerical solutions are plotted in
Figure~\ref{fig:compar_deltaprime}.
\begin{figure}[htbp!]
  \centering
  \begin{tabular}{cc}
    \includegraphics[width=.38\textwidth]{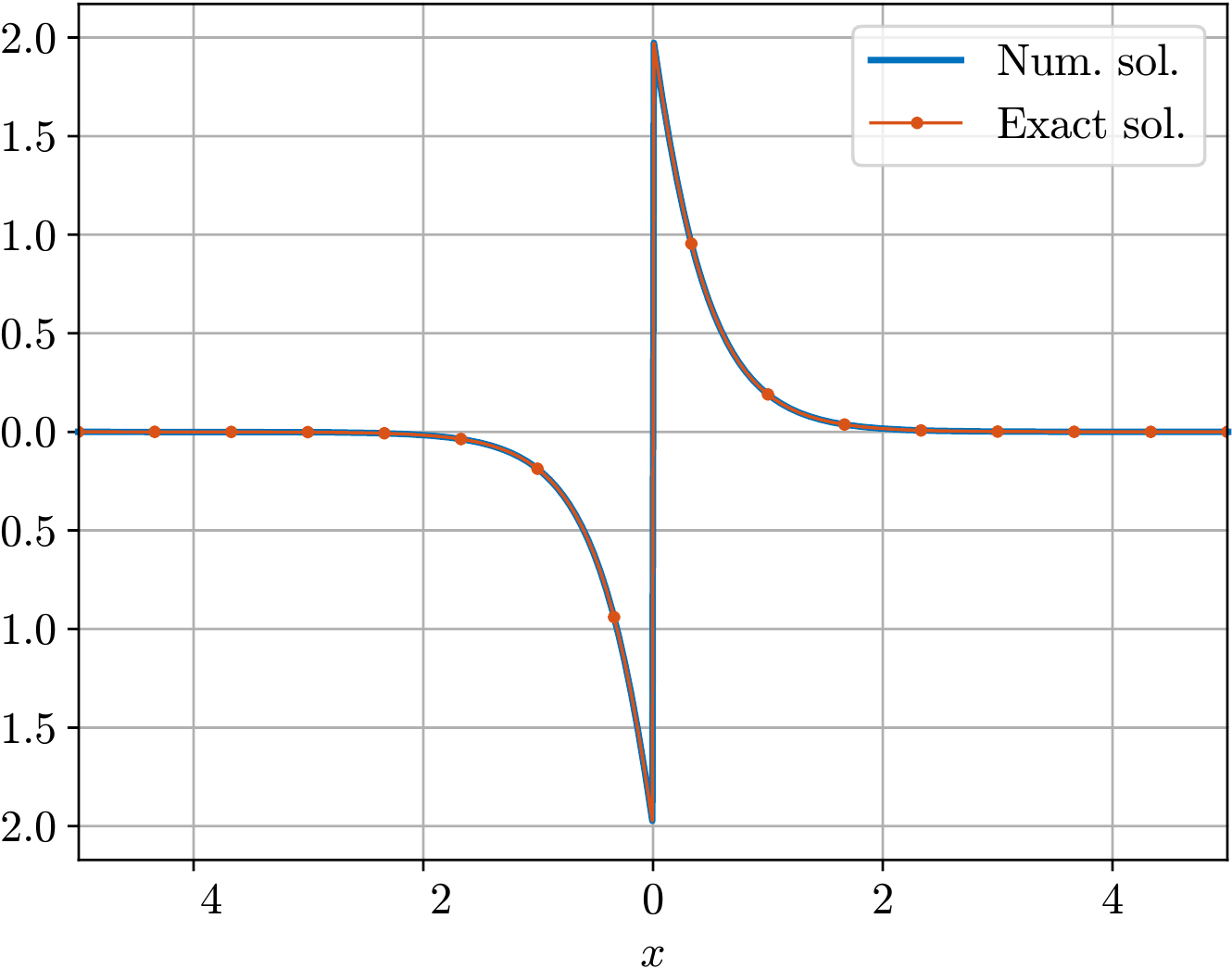} &
    \includegraphics[width=.38\textwidth]{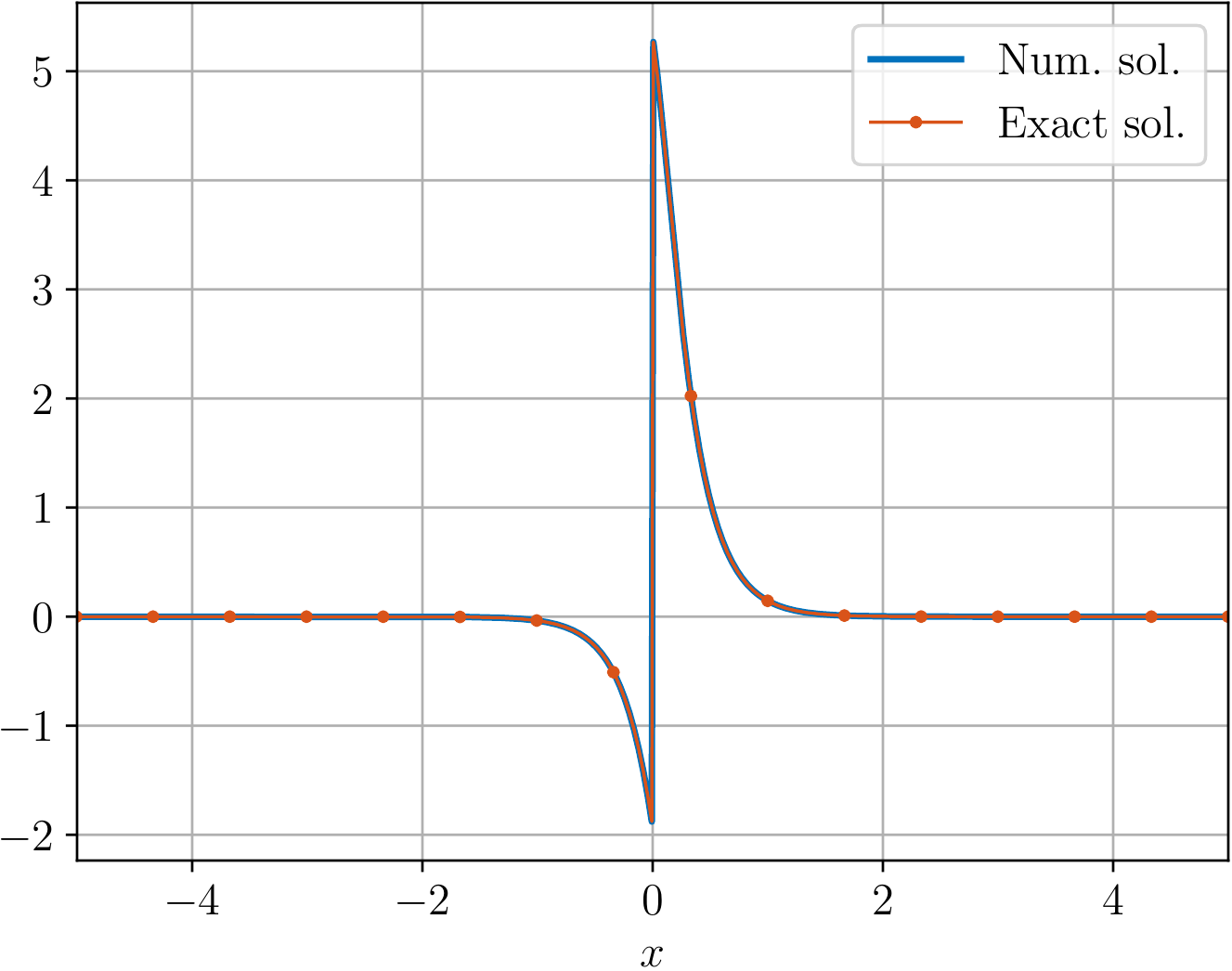} \\
    Odd solution, $\omega=6$ & Asymmetric solution, $\omega=16$
  \end{tabular}
  \caption{Comparison of numerical solutions to ground states for $\delta'$ interaction, $\beta=1$}
  \label{fig:compar_deltaprime}
\end{figure}
When looking to the comparison in logarithmic scale (see
Figure~\ref{fig:compar_log_deltaprime}), we see that we obtain a very good
agreement with the exact solutions.
\begin{figure}[htbp!]
  \centering
  \begin{tabular}{cc}
    \includegraphics[width=.38\textwidth]{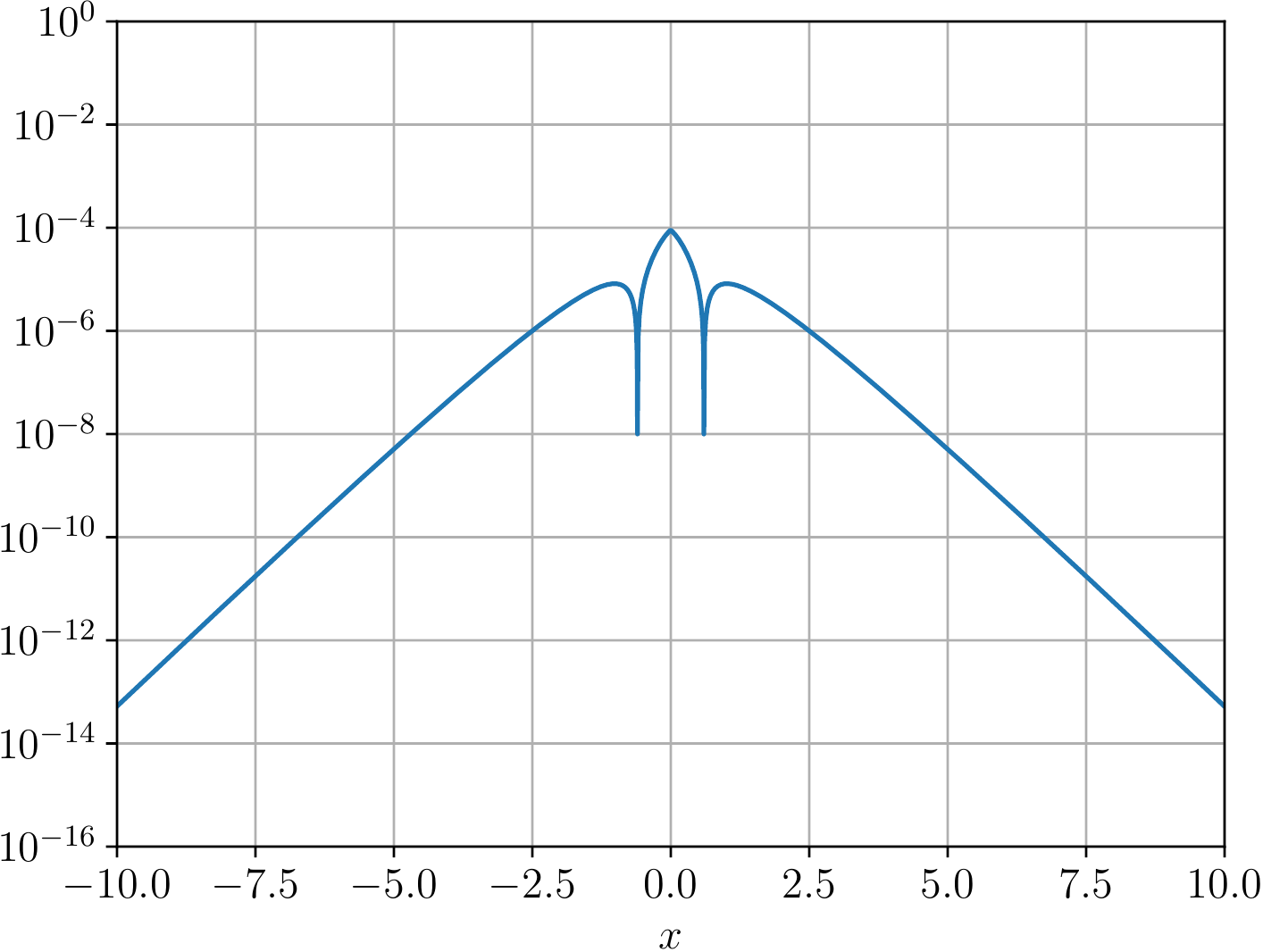} &
    \includegraphics[width=.38\textwidth]{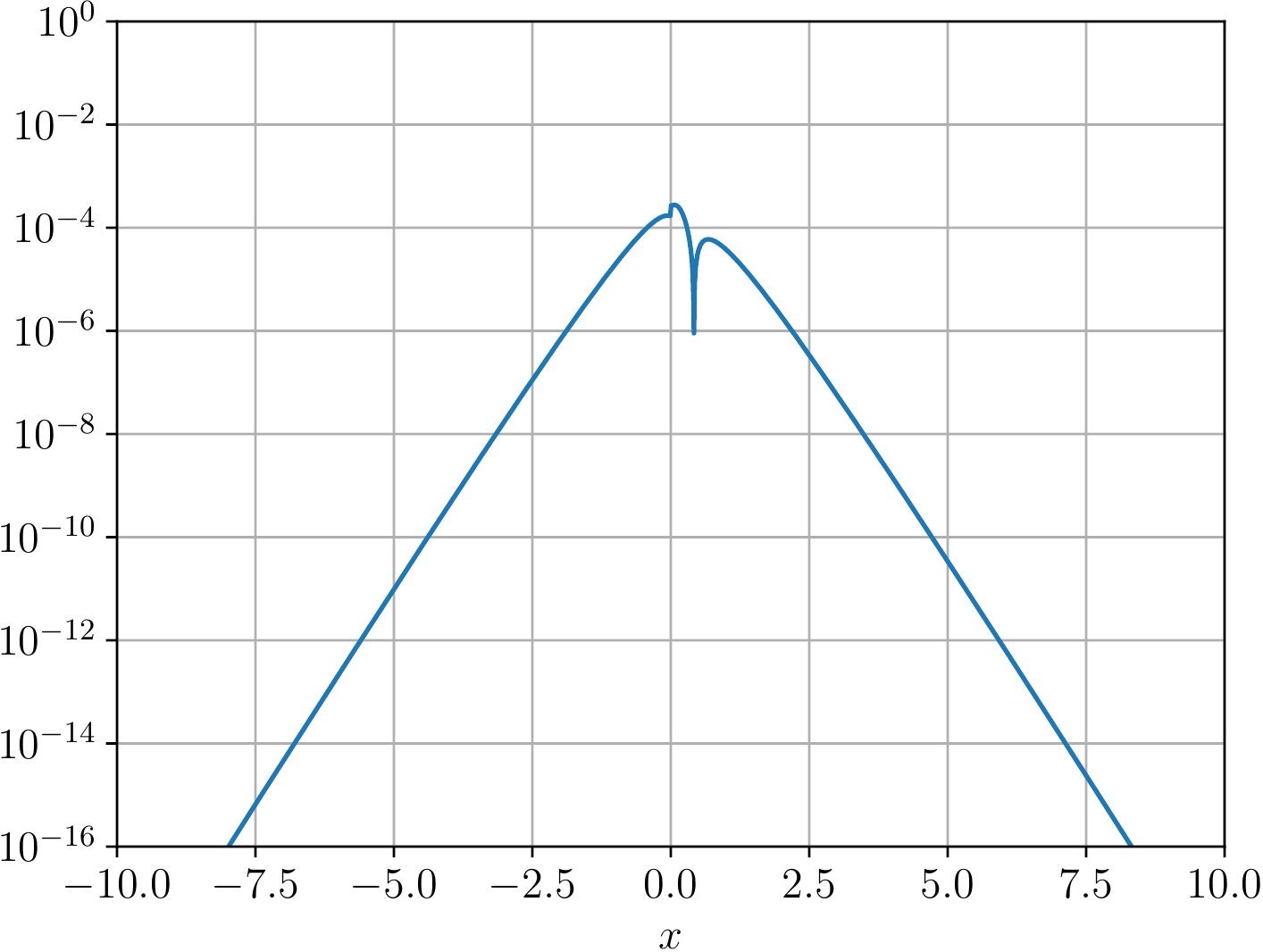} \\
    Odd solution, $\omega=6$ & Asymmetric solution, $\omega=16$
  \end{tabular}
  \caption{Comparison in logarithmic scale of numerical solutions to ground states for $\delta'$ interaction, $\beta=1$}
  \label{fig:compar_log_deltaprime}
\end{figure}
We note that the evolution of the energy (see
Figure~\ref{fig:compar_energy_deltaprime}) during the
minimization process for the asymmetric case is not strictly monotone. After a
first plateau, the algorithm allows to obtain the global minimum (second plateau).
\begin{figure}[htbp!]
  \centering
  \begin{tabular}{cc}
    \includegraphics[width=.38\textwidth]{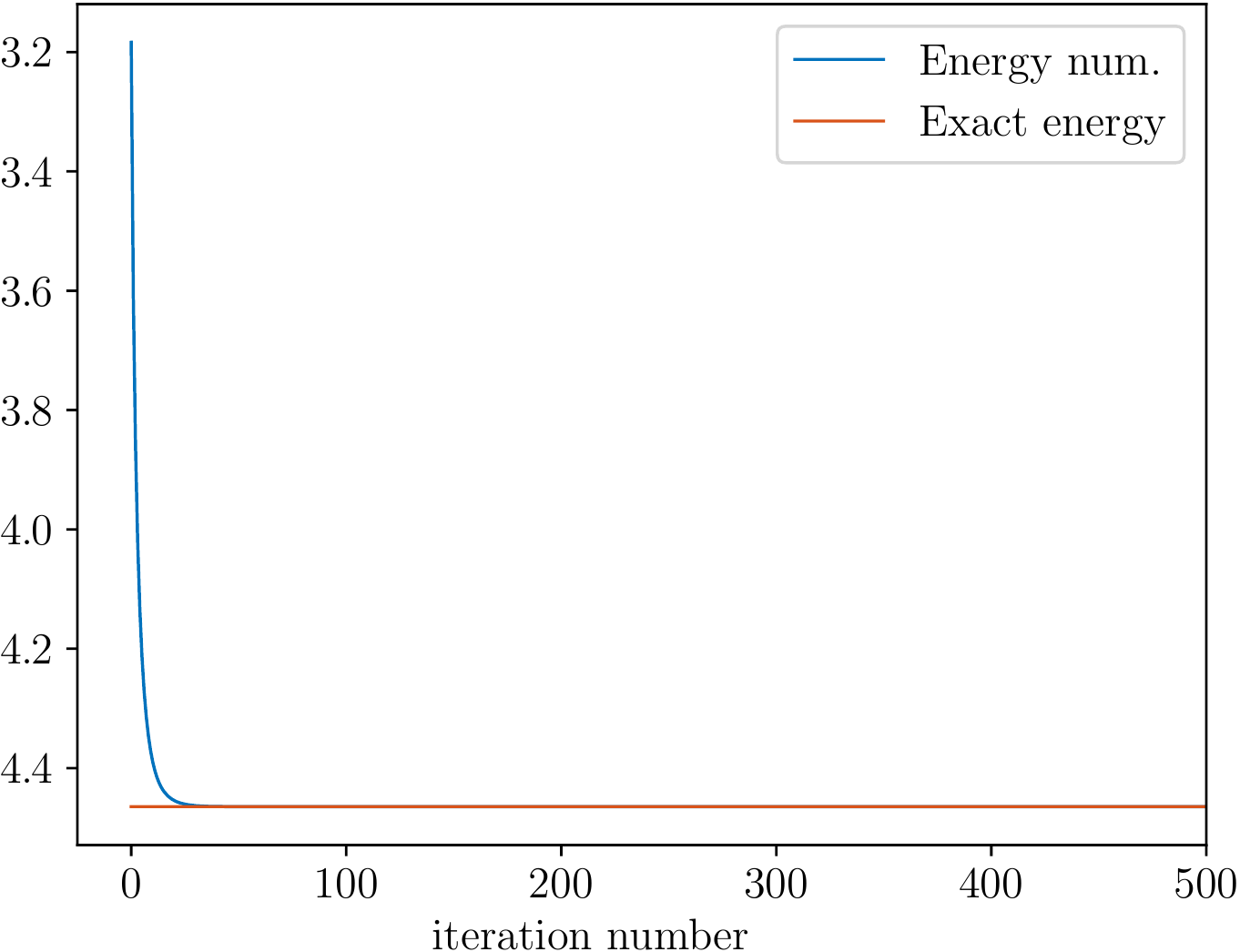} &
    \includegraphics[width=.38\textwidth]{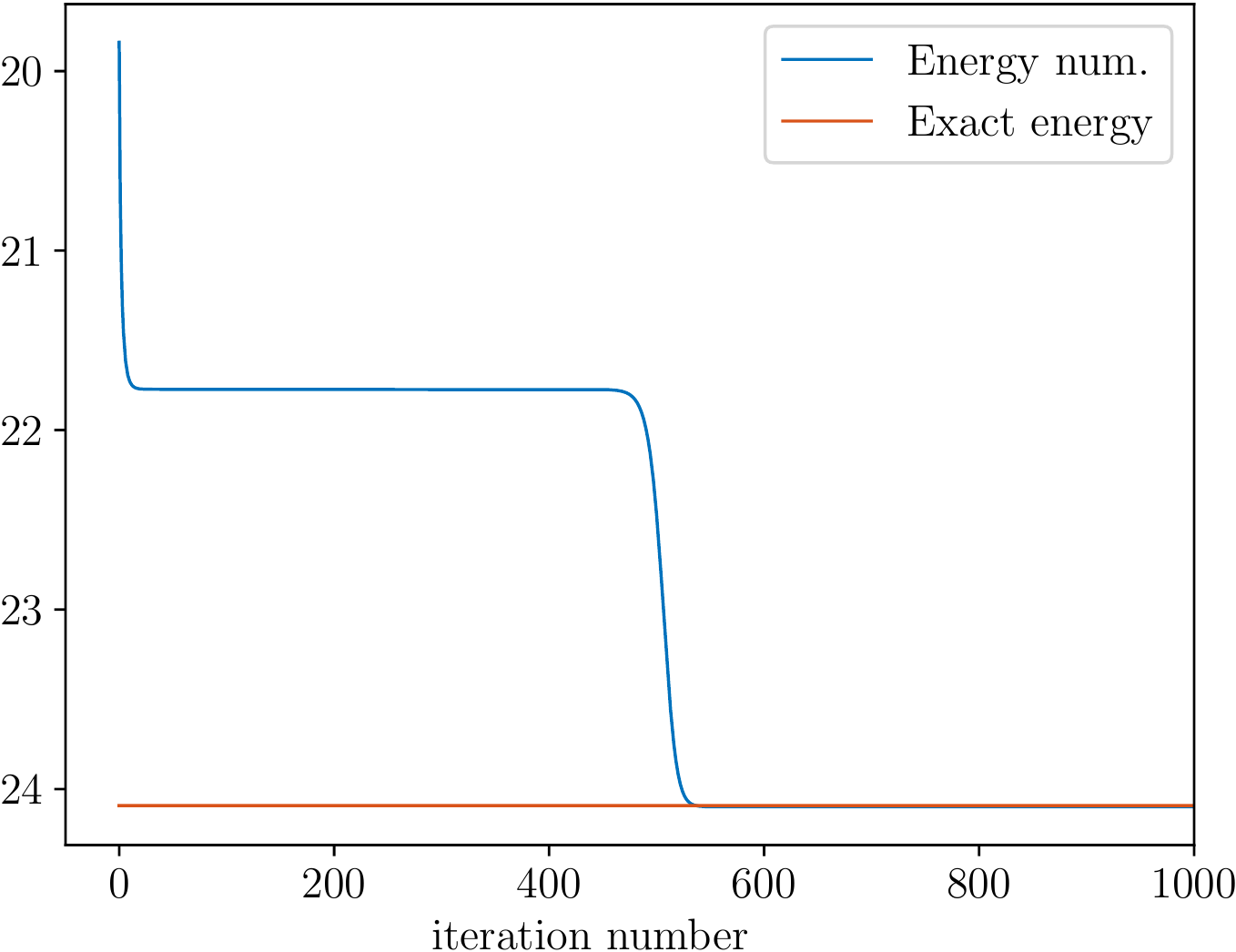} \\
    Odd solution, $\omega=6$ & Asymmetric solution, $\omega=16$
  \end{tabular}
  \caption{Evolution of the energy when computing the ground state of~\eqref{eq:nls2} with $\delta'$ condition compared to $E_{\delta'}(\phi_{\delta'})$.}
  \label{fig:compar_energy_deltaprime}
\end{figure}
\FloatBarrier

\subsection{General non-compact graphs with Kirchhoff condition}
 \label{sec:non-compact}
We consider the computation of ground states on non-compact graphs
not satisfying Assumption~\ref{ass:H} (H). We focus on the signpost and tower of
bubbles graphs. Beside the fact that they exist, little is known
about minimizers. Our numerical algorithm is an easy to use tool
to provide conjectures on the qualitative behavior of ground states on
metric graphs.

\subsubsection{Signpost graph} We now consider a signpost graph (see
Figure~\ref{fig:signpost_bubbles}). We wish to compute a stationary state of the
NLS equation~\eqref{eq:nls2}. The graph has the
following dimensions: the line segment is equal to $2$ and the perimeter of the
loop is $4$. The initial mass is taken to be $1$. Concerning the length of the main line (supposedly very large), it
is set to $100$. On the discretization side, we set the total number of grid
points to $5000$. Furthermore, the time step is fixed to $10^{-2}$ with a total
number of iteration equal to $5000$. 

The resulting stationary state is shown in
Figures~\ref{fig:sol_signpost} and~\ref{fig:sol_signpost_zoom}. We can see that it is localized in the
loop and the line segment and decreases slowly along the main
line. This is consistent with \cite{AdSeTi15b,AdSeTi16}.

\begin{figure}[htbp!]
  \centering
    \includegraphics[width=.7\textwidth]{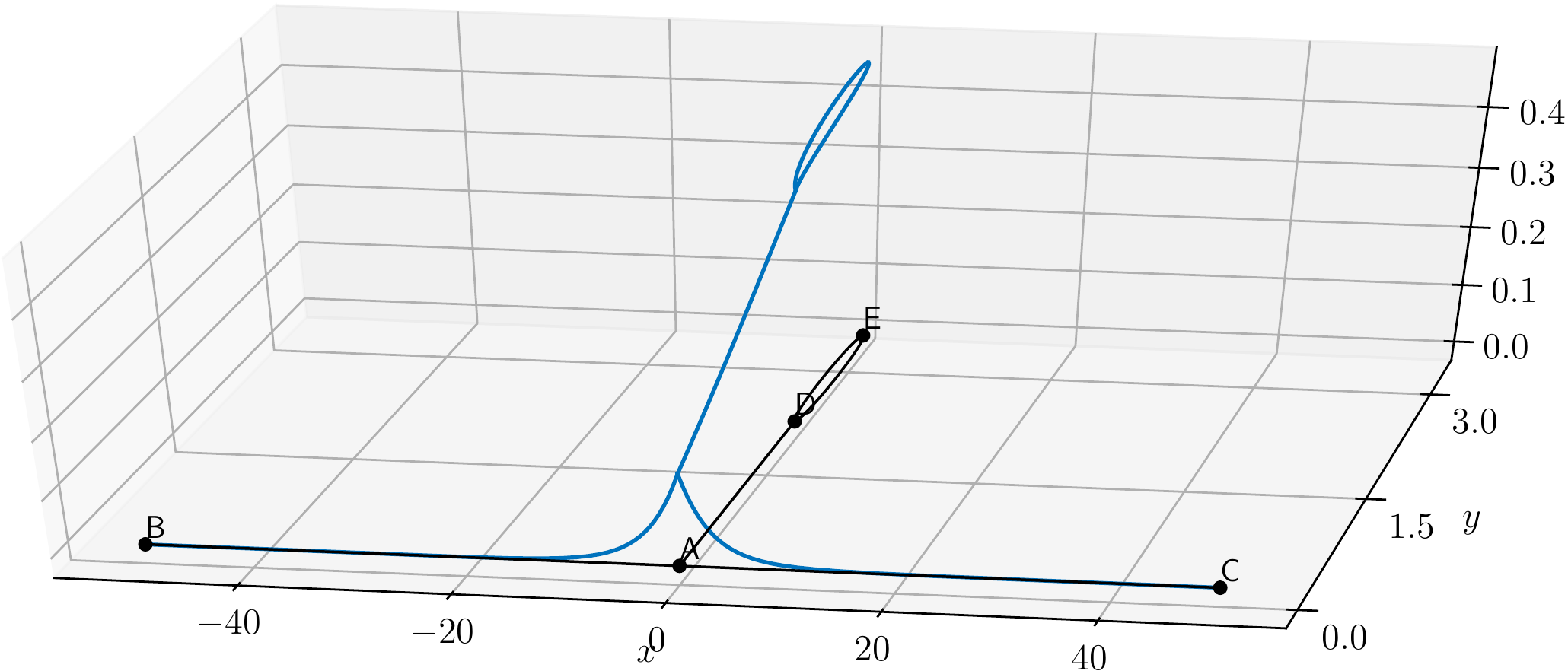}
  \caption{The numerical ground state on the signpost graph}
  \label{fig:sol_signpost}
\end{figure}

\begin{figure}[htbp!]
  \centering
    \includegraphics[width=.7\textwidth]{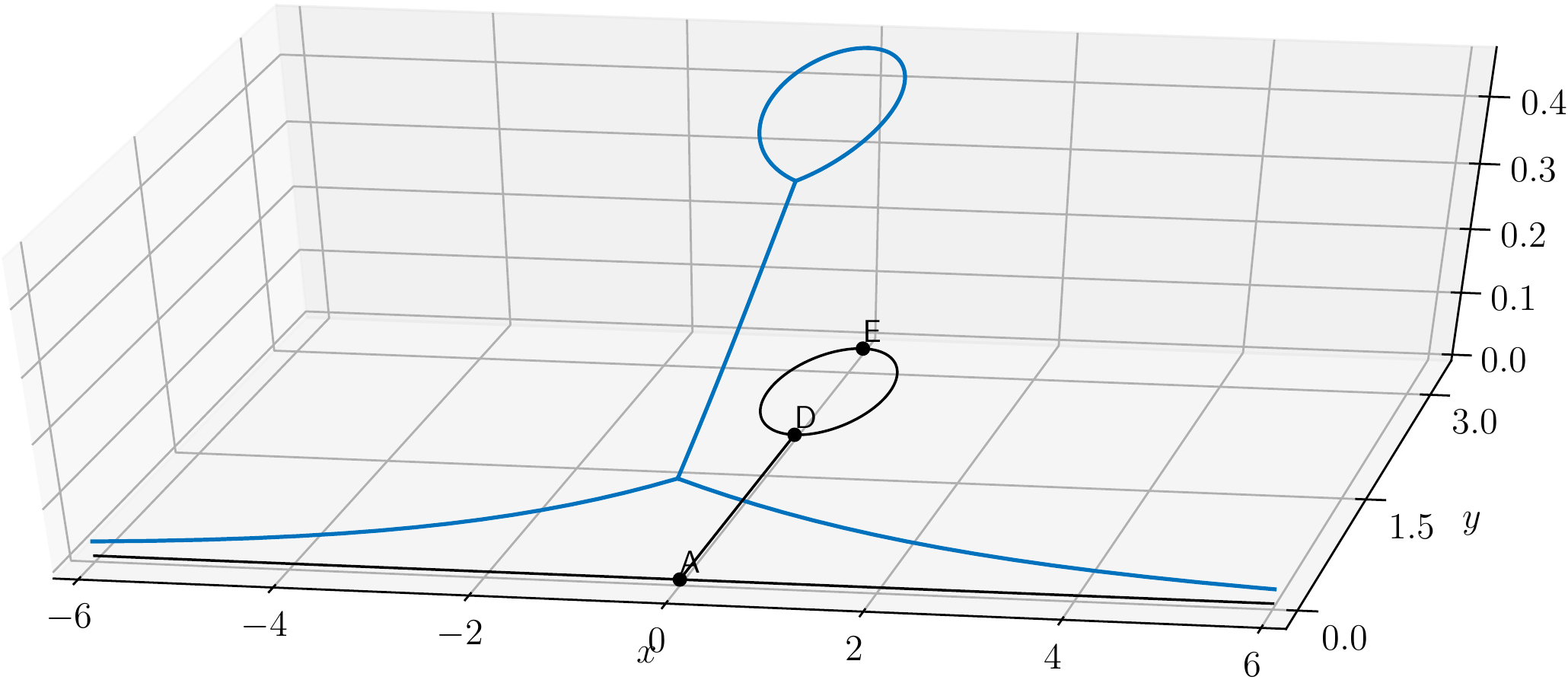}
  \caption{Zoom of the numerical ground state on the signpost graph}
  \label{fig:sol_signpost_zoom}
\end{figure}

\FloatBarrier
\subsubsection{Tower of bubbles graph} Finally, we have computed a stationary
state for the tower of bubbles graph, with $2$ bubbles. The graph is
characterized by the following dimensions: the top bubble has a perimeter of $8$
and the one of the bottom loop is set to $4$. The main line, which is suppose to
be very large, has a length of $100$. The initial mass is taken to be $1$. The space discretization is set by fixing
a total number of grid points to $10000$ and, for the time discretization, we
have the time step set to $10^{-2}$ for a total number of iterations of
$10000$. 

We obtain the stationary state depicted in
Figure~\ref{fig:sol_bubbles} and~\ref{fig:sol_bubbles_zoom}. It is
clear that, as for the signpost graph, the ground state is localized in the
two bubbles and decreases slowly along the main line. Again,  this is consistent with \cite{AdSeTi15b,AdSeTi16}.

\begin{figure}[htbp!]
  \centering
    \includegraphics[width=.7\textwidth]{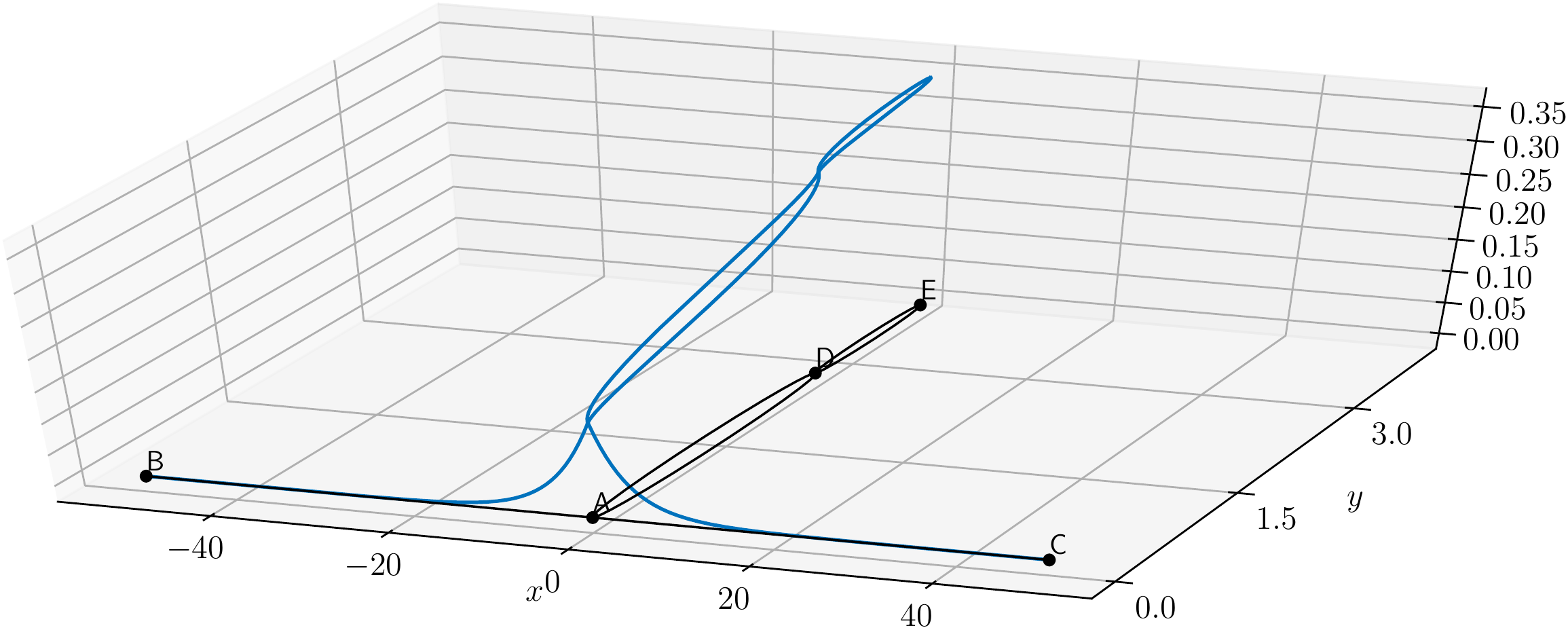}
  \caption{The numerical ground state on the tower of bubbles graph}
  \label{fig:sol_bubbles}
\end{figure}

\begin{figure}[htbp!]
  \centering
    \includegraphics[width=.7\textwidth]{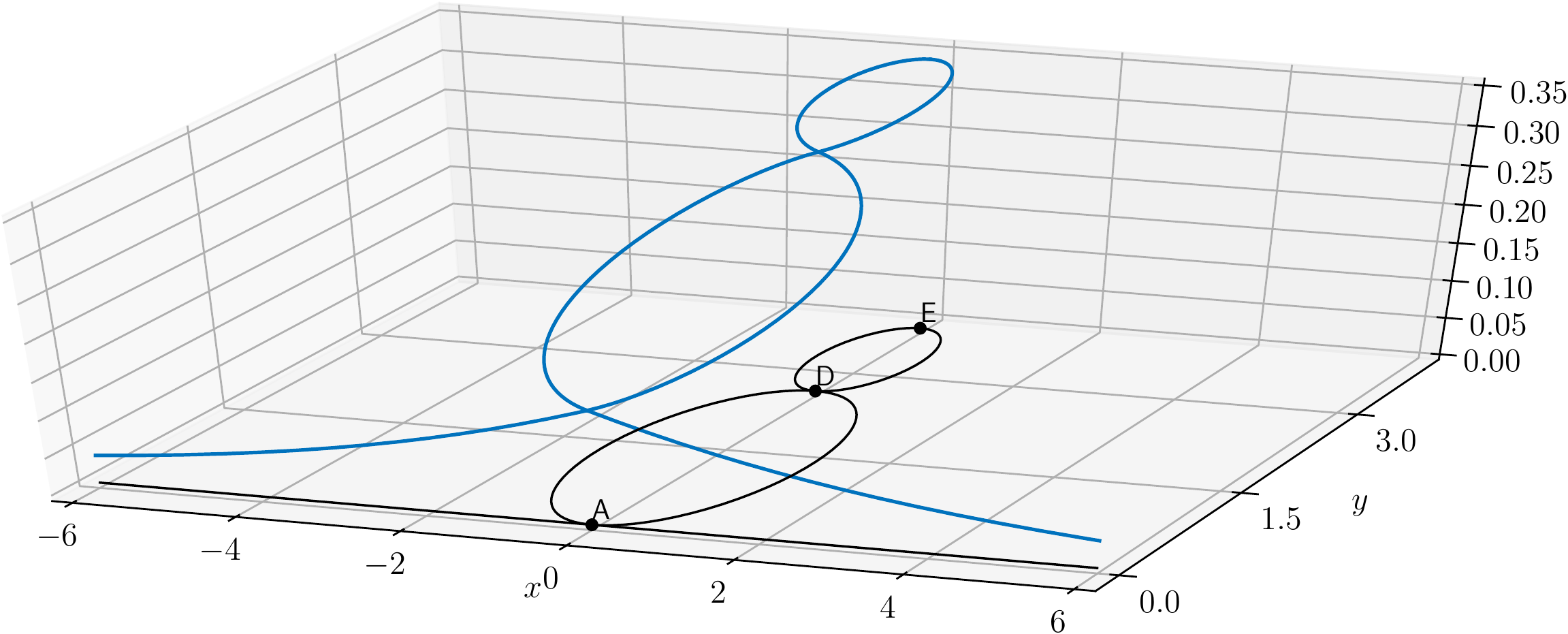}
  \caption{Zoom of the numerical ground state on the tower of bubbles graph}
  \label{fig:sol_bubbles_zoom}
\end{figure}

\bibliographystyle{abbrv}
\bibliography{master}

\end{document}